\documentclass[12pt,reqno]{amsart}
\usepackage{amssymb,amsmath,amsthm,secdot,bbm,mathrsfs,mathtools,thmtools}

\usepackage[margin=1.2in]{geometry}

\usepackage[normalem]{ulem}
\usepackage[
bookmarks=true,
bookmarksnumbered=true,
colorlinks=true, pdfstartview=FitV, linkcolor=blue, citecolor=blue,
urlcolor=blue]{hyperref}

\usepackage[shortlabels]{enumitem}
\usepackage{color}

\usepackage{microtype}
\usepackage[nameinlink,capitalize]{cleveref}
\hypersetup{%
    bookmarksnumbered, bookmarksopen=true, bookmarksopenlevel=1,%
}

\DeclareMathOperator{\1}{\mathbbm{1}}
\newcommand{\eqlaw}{\overset{\mathrm{law}}{=}}
\def\eps{\varepsilon}
\def\phi{\varphi}
\newcommand{\dd}{\mathrm{d}}
\renewcommand{\ge}{\geqslant}
\renewcommand{\le}{\leqslant}

\newcommand{\R}{\mathbb{R}}

\newtheorem{theorem}{Theorem}[section]
\newtheorem{lemma}[theorem]{Lemma}
\newtheorem{proposition}[theorem]{Proposition}
\theoremstyle{definition}\newtheorem{remark}[theorem]{Remark}
\theoremstyle{definition}\newtheorem{definition}[theorem]{Definition}









\crefname{Lemma}{Lemma}{Lemmas}
\crefname{Theorem}{Theorem}{Theorems}

\title{From skew to reflected SPDEs}

\author{Cyril Labb\'e}
\address{Université Paris Cité and Sorbonne Université, CNRS, Laboratoire de Probabilités, Statistique et Modélisation, F-75013 Paris, France and Institut Universitaire de France (IUF).}
\email{clabbe@lpsm.paris}

\author{Thomas Le Guerch}
\address{Université Paris Cité and Sorbonne Université, CNRS, Laboratoire de Probabilités, Statistique et Modélisation, F-75013 Paris, France.}
\email{leguerch@lpsm.paris}

\author{Lorenzo Zambotti}
\address{Sorbonne Université and Université Paris Cité, CNRS, Laboratoire de Probabilit\'es, Statistique et Mod\'elisation, UMR 8001, F-75205 Paris, France and Institut Universitaire de France (IUF).}
\email{zambotti@lpsm.paris}

\begin{document}

\maketitle

\begin{abstract}
In this article, we prove that the solution of the skew stochastic heat equation converges to the solution of the stochastic heat equation with reflection in the limit of infinite skewness. The result holds at equilibrium. This provides an SPDE analog of the link between skew and reflected Brownian motion. One of the intermediate results is the convergence in law of a penalised Brownian bridge to a 3-Bessel bridge.
\end{abstract}

\bigskip
\emph{Keywords}: Stochastic partial differential equations, Singular drift, Reflection, Bessel process

\bigskip
\emph{MSC2020}: 60H15, 
35R60, 
60J55 

\setcounter{tocdepth}{1}
\tableofcontents

\section{Introduction}

\subsection{The skew and the reflected Brownian motion} Consider the following stochastic differential equation
\begin{equation}\label{eq:skew_BM}
	X_t=X_0+B_t+\beta L_t^0(X)\,,\quad t\ge0\,,
\end{equation}
where $B$ is a Brownian motion and $L_t^0(X)$ is the symmetric local time at $0$ of $X$, i.e.
\begin{equation}\label{eq:local_time}
	L_t^0(X):=\lim_{\eps\downarrow0}\frac{1}{2\eps}\int_0^t\1_{[-\eps,+\eps]}(X_s)\,\dd s\,.
\end{equation}
It was proved in \cite{Harrison1981} that (\ref{eq:skew_BM}-\ref{eq:local_time}) admits a unique solution if $|\beta|\le1$ and no solution at all if $|\beta|>1$.
Letting $\alpha=(\beta+1)/2$, $X$ is equal in law to a
Brownian motion whose excursions are chosen to be positive, respectively negative,
independently of each other, and each with probability $\alpha$, resp. $1-\alpha$. See \cite{Lejay06} for a nice review of the skew BM.

Consider now the following stochastic differential equation with reflection
\begin{equation}\label{eq:reflected_BM}
	\left\{\begin{aligned}
		X_t=X_0+B_t+L_t\,,\quad X\ge0\,,\\
		\dd L\ge0\,,\quad\int_0^\infty X_t\,\dd L_t=0
	\end{aligned}\right.
\end{equation}
where $B$ is a Brownian motion and the unknown is the pair $(X,L)$, see \cite[VI.5]{Revuz1999}. 
It turns out that the reflecting process $L$ is equal to the symmetric local time of $X$ defined as in \eqref{eq:local_time}, namely $L_t=L_t^0(X)$. Indeed, the skew Brownian motion with $\alpha=\beta=1$ is equal to the reflected Brownian motion, and therefore
\eqref{eq:reflected_BM} is a special case of (\ref{eq:skew_BM}-\ref{eq:local_time}).

In \cite{Harrison1981} it was also proved that equations (\ref{eq:skew_BM}-\ref{eq:local_time}) (resp. \eqref{eq:reflected_BM}) arise as the scaling limit of
the random walk with an asymmetry at $0$ (resp. the random walk with reflection).

\subsection{A Skew SPDE} In infinite dimension, the picture is quite different. A possible SPDE counterpart of (\ref{eq:skew_BM}-\ref{eq:local_time}) is given by 

\begin{equation}\label{eq:SPDE_skew}
	\left\{
	\begin{aligned}
		&\partial_t u_t(x)=\frac12 \partial^2_{xx}u_t(x) +\kappa\delta_0(u_t(x))+\dot{W}_{t}(x),\quad x\in[0,1],\\
		&u_t(0)=u_t(1)=0\,,\quad t\ge0\,,
	\end{aligned}
	\right.
\end{equation}
where $\dot W$ is a Gaussian space-time white noise, $\kappa\in\R$ and $\delta_0$ is a Dirac mass at $0$. This equation was first introduced in \cite{Bounebache2012} and called the skew stochastic heat equation.
The authors constructed a Markov process via Dirichlet form techniques and showed that it satisfies a stochastic equation involving local times of the unknown.
A formulation relying on local times allows one to avoid the $\kappa\delta_0(u)$ term which is ill-defined.
They further showed the convergence of two approximation procedures to the equation. 
More recently, \cite{Athreya2023,Athreya2025} introduced a formulation of \eqref{eq:SPDE_skew} (with $x\in\R$ or with periodic boundary conditions) that avoids local times, and a new approach to such SPDEs based on the
Stochastic Sewing lemma \cite{LeSSL}, obtaining pathwise uniqueness for a class of equations that includes \eqref{eq:SPDE_skew}.
The case of Dirichlet boundary conditions was recently
considered in \cite{Butkovsky2026}, where uniqueness {\it in law} and ergodicity for a class of equations including \eqref{eq:SPDE_skew}
was proved using Stochastic Sewing.

Similarly to \eqref{eq:reflected_BM}, \cite{Nualart1992} defines the stochastic heat equation with reflection as the equation on the pair $(u^+,\eta)$

\begin{equation}\label{eq:SPDE_NP}
	\left\{
	\begin{aligned}
		&\partial_t u^+_t(x)=\frac12 \partial^2_{xx}u^+_t(x) +\eta_t(x)+\dot{W}_{t}(x),\quad t\ge0,\, x\in[0,1],\\
		&u^+_t(0)=u^+_t(1)=0\,,\quad t\ge0\,,\\
		&u^+_0\ge0\,,\quad\int_{[0,\infty)\times[0,1]} u^+\,\dd\eta=0,
	\end{aligned}
	\right.
\end{equation}
where $\eta$ is a random measure on $[0,\infty)\times[0,1]$, $u^+$ is a.s. continuous on $[0,\infty)\times[0,1]$ and $\dot{W}$ is space-time Gaussian white noise.

\subsection{Main results} Our results are threefold. We first identify a reversible probability measure of the skew SPDE \eqref{eq:SPDE_skew}.

\begin{theorem}\label{theorem:invariant_measure}
For every $\kappa\in\mathbb R$, the unique solution of \eqref{eq:SPDE_skew} admits

\begin{equation}\label{eq:invariant_measure_ABLM}
	\mathbb Q^{\kappa}_{0,0}(\dd w)=Z_{\kappa}^{-1}\exp\left(-2\kappa\int_0^1\1_{\mathbb R_-}(w_x)\,\dd x\right)\mathbb W_{0,0}(\dd w)
\end{equation}
as a reversible probability measure, where $\mathbb W_{0,0}$ is the law of the Brownian bridge and $0<Z_{\kappa}<\infty$ is a normalisation constant.
\end{theorem}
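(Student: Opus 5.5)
Our plan is to prove reversibility by regularising the Dirac mass, using the classical fact that gradient-type SPDEs are reversible with respect to explicit Gibbs measures, and then passing to the limit, using the uniqueness in law from \cite{Butkovsky2026} to identify the limit. Fix a smooth probability density $\rho$ on $\R$, set $\rho_\eps(a)=\eps^{-1}\rho(a/\eps)$, and consider
\begin{equation*}
\partial_t u^\eps=\tfrac12\partial^2_{xx}u^\eps+\kappa\rho_\eps(u^\eps)+\dot W,\qquad u^\eps_t(0)=u^\eps_t(1)=0.
\end{equation*}
Let $F_\eps(a)=\int_a^\infty\rho_\eps(b)\,\dd b$, so that $-F_\eps'=\rho_\eps$ and $F_\eps$ increases to $\1_{\R_-}$ (except at $0$) as $\eps\downarrow0$. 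The drift $\kappa\rho_\eps(u)$ is then the $L^2$-gradient of $-\kappa\int_0^1F_\eps(w_x)\,\dd x$. A standard fact (e.g.\ \cite{Zambotti}-type arguments, or Da Prato--Zabczyk for SPDEs with smooth bounded Lipschitz drifts) is that the solution with a Lipschitz gradient drift $-\nabla U$ is reversible with respect to $Z^{-1}e^{-2U}\mathbb W_{0,0}$. Here the factor $2$ comes from the $\frac12$ in front of the Laplacian, under which $\mathbb W_{0,0}$ is the invariant measure of the linear equation. This identifies
\begin{equation*}
\mathbb Q^{\kappa,\eps}(\dd w)=Z_{\kappa,\eps}^{-1}\exp\Big(-2\kappa\int_0^1F_\eps(w_x)\,\dd x\Big)\mathbb W_{0,0}(\dd w)
\end{equation*}
as a reversible measure for $u^\eps$. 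We will verify this by checking symmetry of the generator on cylindrical functions via the integration by parts formula for the Brownian bridge, or simply cite it.

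Next, note that $0\le F_\eps\le1$, so the densities are bounded above and below uniformly in $\eps$. Hence $Z_{\kappa,\eps}\in[e^{-2|\kappa|},e^{2|\kappa|}]$, and the same bounds give $0<Z_\kappa<\infty$. Since $\mathbb W_{0,0}$-a.s.\ the set $\{x:w_x=0\}$ has Lebesgue measure zero, dominated convergence gives $\int_0^1F_\eps(w_x)\,\dd x\to\int_0^1\1_{\R_-}(w_x)\,\dd x$ a.s. Therefore $\mathbb Q^{\kappa,\eps}\to\mathbb Q^\kappa_{0,0}$ in total variation.

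Reversibility says that the stationary process $(u^\eps_t)_{t\in[0,T]}$ started from $\mathbb Q^{\kappa,\eps}$ has the same law as its time reversal $(u^\eps_{T-t})_{t\in[0,T]}$. We will show that the stationary processes $u^\eps$ are tight in $C([0,T]\times[0,1])$. For this, write $u^\eps=$ stochastic convolution $+$ heat semigroup applied to the initial datum $+$ $D^\eps$, where $D^\eps=\kappa\int_0^t p_{t-s}*\rho_\eps(u^\eps_s)\,\dd s$. The term $D^\eps$ is controlled, uniformly in $\eps$, by occupation-time bounds: under stationarity the marginal $u_s(x)$ has a density bounded by a constant times that of the Brownian bridge, which gives moment bounds on $\int\!\!\int p_{t-s}(x,y)\rho_\eps(u_s(y))$ and its increments. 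By Skorokhod's representation we then extract a limit $u$. To identify it, we check that $u$ solves the skew SPDE in the sense used by \cite{Athreya2023,Butkovsky2026}, i.e.\ that the drift term $\int p\,\rho_\eps(u^\eps)$ converges to the corresponding local-time/sewing limit. We expect this identification to be the main obstacle. We will first try to use directly the stability/approximation results in \cite{Butkovsky2026} or \cite{Bounebache2012}, where convergence of smooth approximations is proved and the solution is unique in law. Alternatively, we will work entirely at the Dirichlet form level: the forms $\mathcal E^\eps(f,g)=\frac12\int\langle\nabla f,\nabla g\rangle\,\dd\mathbb Q^{\kappa,\eps}$ converge in Mosco sense, and \cite{Bounebache2012} constructs the skew process from exactly the limiting form.

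Finally, the stationary limit $u$ has marginal $\mathbb Q^\kappa_{0,0}$ and inherits invariance under time reversal from the $u^\eps$, since time reversal is continuous on path space. By uniqueness in law of \eqref{eq:SPDE_skew}, its law is that of the unique solution started from $\mathbb Q^\kappa_{0,0}$. Hence $\mathbb Q^\kappa_{0,0}$ is a reversible, in particular invariant, probability measure.
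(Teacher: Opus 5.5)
Your overall architecture is the paper's. You mollify $\delta_0$ (the paper uses $\kappa g_{1/n}$), invoke the classical reversibility of $Z^{-1}e^{-2U}\mathbb W_{0,0}$ for the smooth gradient-type equation, and pass to the limit using that $\dd\mathbb Q^{\kappa,\eps}/\dd\mathbb W_{0,0}$ is bounded by $e^{4|\kappa|}$ and converges $\mathbb W_{0,0}$-a.s. Your computations of these densities and of $Z_{\kappa,\eps}$ are correct.

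The genuine gap is the limit passage. You leave it open, and the main route you propose for it does not close as stated. To show that a Skorokhod limit point $u$ of the stationary processes $u^\eps$ solves the equation in the sense of \cref{Def:sshe}, you must verify item (2) there. That item requires that for \emph{every} smooth sequence $b^n\to\kappa\delta_0$ in $\mathcal C^{-1-}$, the integrals $\int_0^t\int_0^1 p_{t-r}(x,y)\,b^n(u_r(y))\,\dd y\,\dd r$ converge uniformly in probability. This is a regularity property of the limit process itself, of the type supplied by the stochastic sewing machinery of \cite{Athreya2023,Butkovsky2026}. Nothing in your argument provides it, and the double limit ``$\rho_\eps(u^\eps)\to$ sewing limit'' has the same problem.

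Your tightness sketch is also not justified as written. Bounds on one-time marginals control $\mathbb E[\rho_\eps(u_s(y))]$, but not the multi-time correlations needed for moments of time increments of $D^\eps$. A Lyons--Zheng argument as in \cref{prop:tightness} would be the right tool.

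The Dirichlet-form alternative has its own gap. Mosco convergence would yield the process of \cite{Bounebache2012}, and you would still have to identify it with the unique solution of \cref{Def:sshe}.

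The missing idea is that tightness, Skorokhod representation and identification are all unnecessary. Apply the stability statement \cref{theorem:properties_spde_dirac}, \cref{item:properties_spde_dirac3}, for each \emph{fixed deterministic} initial datum $u_0$. Your $\kappa\rho_\eps$ lies in $\mathcal C^\infty_b$ and converges to $\kappa\delta_0$ in $\mathcal C^{-1-}$, so the statement applies. Then disintegrate the stationary law over the initial condition. For bounded continuous $f_1,f_2$ and $t>0$, with $u^\eps(u_0)$ the solution of the mollified equation started from $u_0$, reversibility of $\mathbb Q^{\kappa,\eps}$ reads
\begin{align*}
&\int\mathbb W_{0,0}(\dd u_0)\,\frac{\dd\mathbb Q^{\kappa,\eps}}{\dd\mathbb W_{0,0}}(u_0)\,f_1(u_0)\,\mathbb E\bigl[f_2(u^\eps_t(u_0))\bigr]\\
&\qquad=\int\mathbb W_{0,0}(\dd u_0)\,\frac{\dd\mathbb Q^{\kappa,\eps}}{\dd\mathbb W_{0,0}}(u_0)\,f_2(u_0)\,\mathbb E\bigl[f_1(u^\eps_t(u_0))\bigr].
\end{align*}
For $\mathbb W_{0,0}$-a.e.\ $u_0$ both integrands converge: the expectations by stability, the densities by your dominated-convergence argument. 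Both are bounded by $e^{4|\kappa|}\|f_1\|_\infty\|f_2\|_\infty$. Dominated convergence therefore gives the same identity with $\mathbb Q^{\kappa}_{0,0}$ and the solution $u^\kappa$ of the skew equation, which is exactly reversibility.

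This is \cref{prop:stability} together with the paper's proof of \cref{theorem:invariant_measure}. You do mention trying the stability result ``directly'', but it is stated for a common deterministic initial condition, whereas your stationary processes start from $\eps$-dependent random data. The disintegration above bridges that gap. It also removes your final step, which would need uniqueness in law for random initial conditions.
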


Uniqueness of the invariant probability measure for the solution of the skew equation \eqref{eq:SPDE_skew} was proved in \cite{Butkovsky2026},
but existence was not addressed therein. Our result thus completes the picture by identifying explicitly the invariant measure.\\

Secondly, we identify the limit of the reversible measure $\mathbb Q_{0,0}^{\kappa}$ when the skewness parameter $\kappa$ goes to $+\infty$.

\begin{theorem}\label{theorem:static_convergence}
	One has the following convergence of measures
	\begin{equation}\label{eq:convergence_stationary_meas}
		\mathbb Q_{0,0}^{\kappa}\underset{\kappa\rightarrow+\infty}{\Longrightarrow}\mathbb P_{0,0}^{3}\,,
	\end{equation}
	where $\mathbb P_{0,0}^{3}$ is the law of the 3-Bessel bridge (and of the normalised Brownian excursion).
\end{theorem}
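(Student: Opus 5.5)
We need to show that the Brownian bridge penalised by $\exp(-2\kappa\,\mathrm{Leb}\{x: w_x<0\})$ converges in law to the Bessel-3 bridge, i.e. the normalised excursion. The plan is to work with the finite-dimensional distributions plus tightness, and to reduce the computation to one-dimensional penalised Brownian motion via the Markov property of the bridge.

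\textbf{Step 1: the Markov kernels of the penalised process.} By the Feynman--Kac formula, under $\mathbb Q^\kappa_{0,0}$ the canonical process is a (time-inhomogeneous) Markov process. Its transition densities are built from the Feynman--Kac kernel
\[
p^\kappa_t(a,b)=\mathbb E_a\Big[e^{-2\kappa\int_0^t \1_{\R_-}(B_s)\,\dd s}\,;\,B_t\in \dd b\Big]/\dd b,
\]
namely the density at $(x_1,\dots,x_n)$ with $0<t_1<\dots<t_n<1$ is
\[
\frac{p^\kappa_{t_1}(0,x_1)\,p^\kappa_{t_2-t_1}(x_1,x_2)\cdots p^\kappa_{1-t_n}(x_n,0)}{p^\kappa_1(0,0)},
\]
with $Z_\kappa=p^\kappa_1(0,0)\sqrt{2\pi}$. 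The kernel $p^\kappa$ is explicitly computable: it is a Brownian motion killed at rate $2\kappa$ on the negative half-line, and its Laplace transform in $t$ solves a Sturm--Liouville problem with piecewise constant potential. The needed asymptotics as $\kappa\to\infty$ are as follows.
\begin{enumerate}[(i)]
\item For $a,b>0$ fixed, $p^\kappa_t(a,b)\to q_t(a,b):=g_t(b-a)-g_t(b+a)$, the Dirichlet heat kernel on $(0,\infty)$. The reason is that paths entering $\R_-$ spend positive time there, and this time is killed exponentially.
\item For $b<0$, $p^\kappa_t(a,b)$ decays fast, like $e^{-c\sqrt\kappa|b|}$.
\item Near the origin, $p^\kappa_t(0,b)\sim \kappa^{-1/2}C\,\partial_a q_t(0,b)$ uniformly for $b$ in compacts of $(0,\infty)$, with an explicit constant. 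Likewise $p^\kappa_1(0,0)\sim C^2\kappa^{-1}\,\partial_a\partial_b q_1(0,0)$.
\end{enumerate}
I would derive (iii) from the resolvent: with $\lambda$ the Laplace variable, the Green function equals $e^{-\sqrt{2\lambda}|\cdot|}$-type expressions glued at $0$ with decay $\sqrt{2(\lambda+2\kappa)}$ on the negative side. Its value at $(0,b)$ is of order $(\sqrt{2\lambda}+\sqrt{2\lambda+4\kappa})^{-1}\approx(4\kappa)^{-1/2}$, which gives a boundary-layer factor $\kappa^{-1/2}$ at each endpoint.

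\textbf{Step 2: finite-dimensional convergence.} Plugging the asymptotics of Step 1 into the density formula, the factors $\kappa^{-1/2}$ at the two endpoints cancel against $p^\kappa_1(0,0)\asymp\kappa^{-1}$. The limit is
\[
\frac{\partial_a q_{t_1}(0,x_1)\,q_{t_2-t_1}(x_1,x_2)\cdots\partial_b q_{1-t_n}(x_n,0)}{\partial_a\partial_b q_1(0,0)}\,\1_{x_i>0},
\]
which is exactly the finite-dimensional density of the Bessel-3 bridge (equivalently, the normalised excursion) $\mathbb P^3_{0,0}$. To pass from pointwise convergence of densities to weak convergence, I would use Scheffé's lemma: the limit is a probability density, so it suffices to have pointwise convergence, or a dominating bound together with the fact that the negative region carries vanishing mass.

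\textbf{Step 3: tightness in $C([0,1])$.} This I expect to be the main obstacle, since the boundary layers at $x=0,1$ make uniform control delicate. I would use Kolmogorov's criterion. Write $\mathbb Q^\kappa_{0,0}(\dd w)=Z_\kappa^{-1}e^{-2\kappa\Lambda(w)}\,\mathbb W_{0,0}(\dd w)$, where $\Lambda(w)$ denotes the time spent below $0$. Bounding moments naively by $Z_\kappa^{-1}$ times Brownian moments fails, because $Z_\kappa\to0$.

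Instead I would control increments $\mathbb E_{\mathbb Q^\kappa}|w_t-w_s|^4$ using the Markov structure directly. Conditionally on $w_s=a$, the path on $[s,t]$ is a penalised bridge, and penalisation only reduces the weight of excursions below $0$. One can therefore compare with a Brownian bridge via a monotonicity or coupling argument: the penalised transition density satisfies $p^\kappa_t(a,b)\le g_t(b-a)$, and the ratio structure gives the bound $\mathbb E|w_t-w_s|^4\le C|t-s|^2$ uniformly in $\kappa$. The boundary regions $[0,\delta]$ and $[1-\delta,1]$ are handled separately: Step 1(iii) shows that $w_\delta$ is of order $\sqrt\delta$ uniformly in $\kappa$, and the endpoint values are pinned at $0$.

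If the uniform moment bound proves awkward, an alternative route is available. Tightness plus finite-dimensional convergence on $(0,1)$ could be combined with the symmetric representation $\mathbb Q^\kappa=$ law of the bridge conditioned through the occupation time, together with Vervaat/Denisov-type decompositions, which show that the negative excursions shrink to size $O(\kappa^{-1/2})$ in sup-norm.

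Combining Steps 2 and 3 yields the convergence \eqref{eq:convergence_stationary_meas}.
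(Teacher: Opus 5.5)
Your Steps 1--2 are correct and are essentially the paper's finite-dimensional argument. Your kernel $p^\kappa_t(a,b)$ is the paper's $g_t(b-a)\Phi_t(2\kappa;a,b)$, and the extra factors $e^{F_\eps(y)-F_\eps(x)}$ in the paper's $p^\eps_t$ telescope in the bridge densities. Your asymptotics hold with $C=1/2$: they are the $\lambda^{-1/2}$ term in the paper's expansion of $g_t\Phi_t$, together with \eqref{Eq:x=y=0}. One caveat: resolvent asymptotics at a fixed Laplace variable do not give asymptotics at fixed $t$, so you need the inverted explicit formula, as the paper uses.

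The genuine gap is in Step 3, at the endpoints. The comparison $p^\kappa_{t-s}(a,b)\le g_{t-s}(b-a)$ throws away the factor $\kappa^{-1/2}$ that the path pays to leave $0$, so your constant degenerates as $s\downarrow0$ (or $t\uparrow1$). At $s=0$ your bound reads
\[
\mathbb E_{\mathbb Q^\kappa_{0,0}}|w_t|^4\le \frac{\sup_b p^\kappa_{1-t}(b,0)}{p^\kappa_1(0,0)}\,3t^2\asymp \kappa^{1/2}t^2 ,
\]
so no bound $C|t-s|^2$ uniform in $\kappa$ on all of $[0,1]$ follows. Your patch for $[0,\delta]$ does not close this. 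A bound on the single marginal $w_\delta$, together with $w_0=0$, says nothing about $\sup_{t\le\delta}|w_t|$, and that is what tightness requires: $\lim_{\delta\downarrow0}\limsup_{\kappa}\mathbb Q^\kappa_{0,0}(\sup_{t\le\delta}|w_t|>\eta)=0$. Moreover, (iii) is an asymptotic statement at fixed $t$ for $b$ in compacts of $(0,\infty)$. It therefore gives no uniform information in the boundary layer $t\lesssim\kappa^{-1}$, which is exactly where the path must escape the penalised half-line. The Vervaat/Denisov alternative is too vague to fill this.

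The paper handles the endpoints with an idea your proposal lacks. The function $h=e^{F_\eps}$, with $\eps=(4\kappa)^{-1/2}$, is a positive $C^1$ solution of $\frac12h''=2\kappa\1_{\R_-}h$. Hence $\mathbb Q^\kappa_{0,0}$ is the bridge of the diffusion with drift $f_\eps=1/(\eps+x^+)\in(0,1/\eps]$.
\begin{itemize}
\item For this diffusion started at $0$, pathwise comparison of its increments with those of $B$ and of a Bessel-3 process on $\{X_s\ge0\}$, plus $\mathbb E[|X_s|^p;X_s<0]\lesssim\eps^p$, gives H\"older moments uniform in $\eps$.
\item On $\mathcal F_{2/3}$ the bridge has density $p^\eps_{1/3}(w_{2/3},0)/p^\eps_1(0,0)$ with respect to this diffusion. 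This density is bounded in $L^{q_1}$ uniformly in $\eps$ for some $q_1>1$.
\item Time reversal then covers $[1/3,1]$.
\end{itemize}

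Alternatively, you can repair your own route by replacing asymptotics with uniform bounds. The arcsine law gives $\int p^\kappa_s(0,a)\,\dd a=\mathbb E_0[e^{-2\kappa\int_0^s\1_{\R_-}(B_r)\,\dd r}]\lesssim(\kappa s)^{-1/2}$. The paper's formulas give $\sup_{r\in[1/2,1],\,b\in\R}p^\kappa_r(b,0)\lesssim\kappa^{-1/2}$ and $p^\kappa_1(0,0)\gtrsim\kappa^{-1}$. Together these yield $\mathbb E|w_t-w_s|^4\lesssim s^{-1/2}(t-s)^2$ for $0<s<t\le1/2$, uniformly in $\kappa\ge1$. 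Applying Kolmogorov's criterion on each dyadic block $[2^{-j-1},2^{-j}]$ and summing gives $\mathbb E\sup_{t\le 2^{-k}}|w_t|\lesssim\sum_{j\ge k}2^{-3j/8}$. This is the missing endpoint control, and time reversal handles $t$ near $1$.
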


It was proved in \cite{Zambotti2001} that $\mathbb P_{0,0}^{3}$ is the only invariant probability measure for the solution of the reflected equation \eqref{eq:SPDE_NP}. Thus, a natural question is whether the convergence \eqref{eq:convergence_stationary_meas} also holds at the dynamical level (recall
that in general convergence of a sequence of stationary processes is strictly stronger than convergence of the associated invariant measures).
In other words, it is a natural question whether \eqref{eq:SPDE_skew} converges to \eqref{eq:SPDE_NP} as the skewness parameter $\kappa$ goes to $+\infty$, at least in equilibrium,
namely if the initial condition is distributed according to the invariant measure. 

Our third and main result provides a positive answer to this question.
\begin{theorem}\label{theorem:dynamical_convergence}
	Let $u^{\kappa}$ be the solution of \eqref{eq:SPDE_skew} started from $u_0^{\kappa}\eqlaw\mathbb Q_{0,0}^{\kappa}$ and let $u^+$ be the solution of \eqref{eq:SPDE_NP} started from $u^+_0\eqlaw\mathbb P_{0,0}^{3}$. One has the following convergence in law
	\begin{equation*}
		u^{\kappa}\underset{\kappa\rightarrow+\infty}{\Longrightarrow}u^+
	\end{equation*}
	in the space $\mathcal C([0,\infty)\times[0,1])$ of spacetime continuous functions endowed with the topology of uniform convergence over compact sets.
\end{theorem}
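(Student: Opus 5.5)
The plan follows the Dirichlet form / Mosco convergence strategy, with a tightness-plus-identification argument as a fallback for the stationary processes.

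\textbf{Tightness.} By \cref{theorem:invariant_measure}, $u^\kappa$ is a stationary reversible process with marginal $\mathbb Q^\kappa_{0,0}$. I will use the Lyons--Zheng forward--backward martingale decomposition to bound increments. For a test function $h\in C^2_c((0,1))$ and smooth cylindrical $F$, the observable $\langle u_t,h\rangle$ splits into a forward and a backward martingale. The bracket of each is $\int_0^t\|h\|_{L^2}^2\,\dd s$, which does not depend on $\kappa$. The singular drift $\kappa\delta_0(u)$ cancels in the Lyons--Zheng sum, which is the key point. This gives moment bounds $\mathbb E|\langle u^\kappa_t-u^\kappa_s,h\rangle|^p\lesssim |t-s|^{p/2}$ uniformly in $\kappa$, hence tightness of $\langle u^\kappa,h\rangle$.

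To get space-time continuity, I will instead write $u^\kappa_t = P_tu_0+Z_t+D^\kappa_t$. Here $Z$ is the stochastic convolution and $D^\kappa$ is the drift, which is a nondecreasing-in-$\kappa$ type term. I will control $D^\kappa$ via $u^\kappa = u_0 + $ martingale parts, combined with uniform spatial regularity of the stationary marginals. The marginals $\mathbb Q^\kappa_{0,0}$ are tight in $C([0,1])$ by \cref{theorem:static_convergence}, and are in fact uniformly Hölder by absolute continuity bounds with respect to the Brownian bridge conditioned appropriately. Kolmogorov-type criteria then give tightness in $\mathcal C([0,\infty)\times[0,1])$.

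\textbf{Identification of limits.} Along a subsequence, $u^\kappa\Rightarrow v$. The process $v$ is stationary with marginal $\mathbb P^3_{0,0}$ by \cref{theorem:static_convergence}, so $v\ge0$. For $\kappa\ge0$ the drift $\eta^\kappa:=\kappa\delta_0(u^\kappa)$ is a nonnegative measure, defined via local times or as the limit in the equation
\[
\langle u_t,h\rangle-\langle u_0,h\rangle-\tfrac12\int_0^t\langle u_s,h''\rangle\dd s - M_t(h)=\int_0^t\!\!\int h\,\dd\eta^\kappa .
\]
The left side is tight, so $\int h\,\dd\eta^\kappa$ is tight for $h\ge0$. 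Hence $\eta^\kappa$ is tight as a measure on $[0,T]\times(0,1)$, and $(u^\kappa,\eta^\kappa,W)\Rightarrow(v,\eta,W)$ along a further subsequence. Passing to the limit, $(v,\eta)$ solves the first two lines of \eqref{eq:SPDE_NP}.

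\textbf{Main obstacle: the contact condition $\int v\,\dd\eta=0$.} Convergence of measures only gives this weakly. I would try to show that $\mathbb E\int_0^T\!\int_0^1 \phi(u^\kappa)\,h\,\dd\eta^\kappa\to0$ for $\phi(a)=a\wedge1$, and use lower semicontinuity. Under stationarity, this expectation can be computed via the integration-by-parts/occupation-density formula for $\mathbb Q^\kappa$. Formally, $\eta^\kappa$ charges $\{u^\kappa(x)=0\}$, where $\phi=0$. This exact vanishing fails to pass to the limit directly, however, so I would instead bound $\mathbb E\int_0^T\!\int h\, (v\wedge\epsilon^{-1}\!\cdot)\,$ through the Revuz measure of $\eta$. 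Concretely, $\mathbb E\int h\,\dd\eta^\kappa$ equals $T\int h(x)\,\kappa\,\Sigma^\kappa_x(\dd w)$, where $\Sigma^\kappa_x$ is the restriction of $\mathbb Q^\kappa_{0,0}$ to $w_x=0$. I would prove that $\kappa\,\Sigma^\kappa_x(\cdot)$ converges to the Revuz measure of the reflected equation, namely $\frac12\partial^2$-type measures on $\{w_x=0\}$ under the Bessel bridge, as in Zambotti's integration by parts formula.

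From this Revuz measure convergence, $\mathbb E\int\psi(v)\,\dd\eta$ is identified as supported on $\{v_x=0\}$, which yields the contact condition. Uniqueness in law for \eqref{eq:SPDE_NP} (Nualart--Pardoux, pathwise) then identifies $v\eqlaw u^+$ and concludes.

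The hard step is this Revuz-measure asymptotic, which requires fine estimates on the penalised bridge near a zero at $x$. I expect it to follow from the same computations that prove \cref{theorem:static_convergence}, namely splitting the bridge at $x$ into two penalised bridges on $[0,x]$ and $[x,1]$.
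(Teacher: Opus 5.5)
There is a genuine gap, and it sits where you place your ``main obstacle'', though not for the reason you give.

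\textbf{The contact condition passes to the limit directly.} For $\psi\in\mathcal C_c([0,\infty)\times(0,1))$, the map $(v,\eta)\mapsto\int|v|\,\psi\,\dd\eta$ is continuous on $\mathcal C([0,\infty)\times[0,1])\times\mathbb M$. Indeed, $v$ converges uniformly on the compact support of $\psi$, and vague convergence keeps $\eta(\psi)$ bounded; this is \cref{lemma:continuity_u_eta}. The set where this functional vanishes is therefore closed. So if $\int|u^\kappa|\psi\,\dd\eta^\kappa=0$ a.s.\ for every $\kappa$, the same holds for any limit in law $(v,\eta)$. Exhausting $[0,\infty)\times(0,1)$ by $\psi_j\uparrow1$ and applying Fatou then gives $\int v\,\dd\eta=0$.

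Your Revuz-measure route is thus unnecessary, and as written its key step is left unproven: the convergence of $\kappa\,\Sigma^\kappa_x$ to the boundary measure of Zambotti's integration by parts formula. It would also not close the argument on its own. To transfer Revuz information to the limit pair you need $\mathbb E\int\phi(u^\kappa)h\,\dd\eta^\kappa\to\mathbb E\int\phi(v)h\,\dd\eta$, which is exactly the continuity above plus uniform integrability. For $\phi(a)=|a|\wedge1$ the left side vanishes identically, so identifying the limiting Revuz measure plays no role.

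\textbf{The real difficulty is the step you call formal.} You need that, at fixed $\kappa$, there is a nonnegative measure $\eta^\kappa$ in the weak equation that is carried by $\{u^\kappa=0\}$. The solution concept of \cref{Def:sshe} provides only the process $K$, as a limit of $\int_0^t\int_0^1 p_{t-r}(x,y)\,b^n(u_r(y))\,\dd y\,\dd r$. No measure, local-time representation, or support property is available in that framework. The paper obtains all three through the stationary smooth-drift solutions $u^{\kappa,n}$. It sets $\eta^{\kappa,n}=b^n(u^{\kappa,n})\,\dd t\,\dd x$ with $b^n=\kappa g_{1/n}$ and uses
$\int|u^{\kappa,n}|\psi\,\dd\eta^{\kappa,n}\le A\,\eta^{\kappa,n}(\psi)+\kappa g_{1/n}(A)\int|u^{\kappa,n}|\psi\,\dd t\,\dd x$.
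Hence every limit point $(\tilde u^\kappa,\tilde\eta^\kappa)$ satisfies the weak equation and the exact contact property; this is \cref{Prop:MildLimitPoint}. Moreover $\tilde u^\kappa\eqlaw u^\kappa$ by \cref{prop:stability}, which is all the $\kappa\to\infty$ step requires.

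\textbf{Tightness.} Your tightness argument has a similar soft spot. Test-function bounds on $\langle u^\kappa_t-u^\kappa_s,h\rangle$ do not control sup-norm time increments, and ``controlling $D^\kappa$'' is not an argument. The paper proves the Lyons--Zheng bounds for the reversible approximations $u^{\kappa,n}$ and transfers them to $u^\kappa$ by Fatou. It then sums them over the eigenbasis to bound $\|u^\kappa_t-u^\kappa_s\|_{H^{-\beta}}$, and interpolates via \cref{lemma:interpolation} against the uniform $\mathcal C^\eta$ moments of $\mathbb Q^\kappa_{0,0}$ from \cref{lemma:tightness}.

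Those uniform Hölder moments cannot come from absolute continuity with respect to the Brownian bridge. Indeed $\|\dd\mathbb Q^\kappa_{0,0}/\dd\mathbb W_{0,0}\|_\infty=Z_\kappa^{-1}=2\kappa/(1-e^{-2\kappa})$ blows up as $\kappa\to\infty$. The paper instead proves them through the diffusion representation of \cref{lemma:equality_measures}.
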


In order to appreciate this result, note that the drifts of both equations \eqref{eq:SPDE_skew} and \eqref{eq:SPDE_NP} are highly singular, and it is far from clear that
the former should converge to the latter, in any sense (see also the discussion at the end of the next subsection). The drift of \eqref{eq:SPDE_skew}, a Dirac mass at $0$, 
acts only when $u=0$, but by continuity of the solution its effect is to penalise the paths that become too negative. When the skewness $\kappa$ tends to $+\infty$, 
the stationary solution is forced to become non-negative by~\cref{theorem:static_convergence}. This shows why the knowledge of the invariant measure 
\eqref{eq:invariant_measure_ABLM} plays a crucial role, while a pathwise approach is at present not effective.

\subsection{Skew BM versus Skew heat equation}
	The invariant measure of the skew Brownian motion is given by $\alpha\1_{\R_+}(x)+(1-\alpha)\1_{\R_-}(x)\,\dd x$ (a non-normalisable measure). It is equivalent to the law of the Lebesgue measure $\dd x$ on $\R$, which is the invariant measure of the standard Brownian motion; the Radon-Nikodym derivative is noncontinuous and non-convex.

	The invariant (probability) measure of the skew SPDE \eqref{eq:SPDE_skew} is equivalent to the invariant (probability) measure of the stochastic heat equation. However, the Radon-Nikodym derivative is much smoother than for the skew BM: it admits a logarithmic derivative, see \cite{Bounebache2012}.

Unlike for \eqref{eq:skew_BM}-\eqref{eq:local_time}, where $\beta=1$ corresponds to the reflected Brownian motion,
there exists no $\kappa\in\R$ such that the solution of \eqref{eq:SPDE_skew} remains nonnegative. Equation \eqref{eq:SPDE_skew} is well posed for any $\kappa\in\R$, and
by~\cref{theorem:dynamical_convergence} the reflected equation is recovered only in the limit $\kappa\to+\infty$ (at least we can prove this in equilibrium).

	This is related to the fact that the skew and the reflected SDE are critical, in the sense that they are invariant in law under the Brownian scaling: $(\lambda^{-1/2}X_{\lambda t})_{t\ge 0}$ has the same law as $X$ with initial condition $\lambda^{-1/2}X_0$. The situation for the SPDEs is different. The reflected SPDE \eqref{eq:SPDE_NP} is critical,
	 in the sense that it is invariant under the same scaling as the linear stochastic heat equation: for $\lambda$ positive, $(\lambda^{-1/2}u^+_{\lambda^2 t}(\lambda x))_{t,x}$
	 satisfies the same equation as $u^+$. 
	On the other hand, the skew SPDE \eqref{eq:SPDE_skew} is subcritical, in the sense that, under the same scaling, the non-linearity is multiplied by $\lambda^{3/2}$ and
	therefore formally vanishes in the limit $\lambda\to 0$. 
	In infinite dimension, a subcritical skew equation becomes critical in the limit of infinite skewness.
	
	We point out another difference between the two settings: as already mentioned,
	the reflecting process $L$ of the reflected BM \eqref{eq:reflected_BM} is equal to the symmetric local time of $X$ defined in \eqref{eq:local_time}, 
	which allows one to write
	\[
	L_t=L^0_t(X) = \int_0^t \delta_0(X_s)\,\dd s, \qquad t\geq 0.
	\]
	The reflecting measure $\eta$ that appears in \eqref{eq:SPDE_NP} should have a similar representation as a positive continuous additive functional (PCAF) of the Markov process $(u^+_t)_{t\geq 0}$, namely:
	\begin{equation}\label{eq:representation_eta}
	\eta([0,t]\times A) = \int_0^t f_A(u^+_s)\,\dd s, \qquad t\geq 0,
	\end{equation}
	for a $f_A$ that could be a measure or a distribution on $C([0,1])$. This has been also investigated in \cite{Z02,Z04a}, where $\eta$ has been
	studied as an additive functional (in time) by identifying its Revuz measure, but an exact form for $f_A$
	remains elusive. This also explains why the convergence result of~\cref{theorem:dynamical_convergence} is far from trivial: the (singular) drift of \eqref{eq:SPDE_skew} does not converge to the (singular) drift of \eqref{eq:SPDE_NP} in any reasonable sense; actually, $\kappa\delta_0$ simply diverges as $\kappa\to+\infty$. 
	
	This also explains why Stochastic Sewing is not effective for the proof of the convergence in~\cref{theorem:dynamical_convergence}:
	in  \cite{Athreya2023,Athreya2025} a representation of the drift as $\int_0^t b(u_s)\,\dd s$ with $b\in {\mathcal C}^\gamma$ and $\gamma<0$ is
	crucial for the Stochastic Sewing Lemma to be applicable.

\subsection{Proof strategy}

Our strategy for the main result~\cref{theorem:dynamical_convergence} is to prove tightness of $(u^\kappa)_{\kappa}$ and identify the limit. Tightness of the initial conditions follows from the convergence~\eqref{eq:convergence_stationary_meas}. Uniform control of time increments follows from the Lyons-Zheng decomposition \cite{LyonsZheng}, which is available because $u^\kappa_0$ is distributed according to the invariant reversible measure of the equation. 

Unlike the recent papers on SPDEs with distributional drifts like \cite{Athreya2023,Athreya2025}, the proof of~\cref{theorem:dynamical_convergence} does not rely on Stochastic Sewing, since we do not have a representation like
\eqref{eq:representation_eta} for $\eta$ as an additive functional of $u^+$. On the other hand, pathwise uniqueness for the limit equation \eqref{eq:SPDE_NP} is very well 
understood  \cite{Nualart1992}. To prove convergence of $u^\kappa$ to $u^+$, we introduce $\eta^\kappa(\dd t,\dd x):=\kappa\delta_0(u^\kappa_t(x))\,\dd t\,\dd x$ and
show that $(u^\kappa,\eta^\kappa)$ (in equilibrium) converges to a solution of \eqref{eq:SPDE_NP}.

The convergence result of the invariant measures (\cref{theorem:static_convergence}) is fundamental in the proof of~\cref{theorem:dynamical_convergence} for 
two reasons. First, for tightness of $(u^\kappa)_\kappa$; secondly, and more importantly, for the non-negativity of the limit of $u^\kappa$ as $\kappa\to+\infty$. 
This crucial ingredient is necessary for the identification of this limit as a solution of \eqref{eq:SPDE_NP}. We note that the proof of~\cref{theorem:invariant_measure}, where the
invariant measure of \eqref{eq:SPDE_skew} is identified, relies on 
an approximation result from~\cite{Butkovsky2026}, which is itself based on Stochastic Sewing.

\subsection{Possible extensions}

	It would be natural to establish \cref{theorem:dynamical_convergence} out of equilibrium. In the present work, equilibrium is used to obtain that $(u^\kappa)_{\kappa>1}$ is tight and that any limit point as $\kappa\to+\infty$ is nonnegative: it is unclear how to deal with generic initial conditions. Note however that the identification of the limit presented in~\cref{Subsec:IdLimit} relies on pathwise arguments.
	
	One could explore the setting of unbounded spatial domains. To ensure the existence of an invariant probability measure, one can work on the entire line $\R$ with a damping term $-f(u^\kappa)$ or on the half line $\R_+$ with pinning at the origin, namely $u^\kappa(t,0)=0$. Let us mention that the latter setting was considered in~\cite{FauLab} at the level of the reflected SPDE \eqref{eq:SPDE_NP}: therein, the convergence of discrete models with reflection towards the stochastic heat equation with reflection on $\R_+$ with pinning was established.
	
	One could also aim for generalizing \cref{theorem:dynamical_convergence} by considering, instead of $\kappa \delta_0$, a family of measures $b_\kappa$ on $\R$ that ``converges'' to $+\infty \delta_0$. The main issue is to prove
	convergence of the stationary distributions, namely the analog of~\cref{theorem:static_convergence}, by replacing $\kappa\1_{\mathbb R_-}$ with an antiderivative of
	$-b_\kappa$ in \eqref{eq:invariant_measure_ABLM}. For the proof of~\cref{theorem:static_convergence} we rely on explicit formulae for densities of functionals of Brownian motion, which would need to be 
	extended in a more general setting (see the proof of~\cref{lemma:explicit_computations_Phi} in subsection \ref{Subsec:EstimatesDensity}).

\subsection{Outline of the paper}

\cref{section:stationary_measures} is dedicated to the proof of~\cref{theorem:static_convergence}.~\cref{section:SPDEs} provides rigorous definitions of the SPDEs under study and contains the proof of~\cref{theorem:invariant_measure}. \cref{Sec:DynCV} presents the proof of~\cref{theorem:dynamical_convergence}.

\section{Convergence of the stationary measures}\label{section:stationary_measures}

The goal of this section is to prove \cref{theorem:static_convergence}. We first introduce some notation. We denote by $w$ the canonical process on $\mathcal C([0,1],\R)$ and by $(\mathcal F_t)_{t\in[0,1]}$ the canonical filtration. We will use the following probability measures on $\mathcal C([0,1],\R)$:
\begin{itemize}
	\item $\mathbb W_{x}$ is the law of a Brownian motion starting at $x$
	\item $\mathbb W_{x,y}$ is the law of a Brownian bridge from $x$ to $y$
	\item $\mathbb P^{3}_x$ is the law of a $3$-dimensional Bessel process starting at $x$
	\item $\mathbb P^{3}_{x,y}$ is the law of a $3$-dimensional Bessel bridge from $x$ to $y$.
\end{itemize}
We will also consider, for any $\kappa > 0$ and $x\in\R$, the probability measure
\begin{equation}\label{eq:invariant_measure_ndirac}
	\mathbb Q^{\kappa}_{x,x}(\dd w)=(Z_{x,x}^{\kappa})^{-1}\exp\left(-2\kappa\int_0^1\1_{\R_-}(w_u)\,\dd u\right)\mathbb W_{x,x}(\dd w)\,,
\end{equation}
where $Z_{x,x}^{\kappa}\in(0,\infty)$ is a normalisation constant which ensures that $\mathbb Q^{\kappa}_{x,x}$ is a probability measure. Note that the notation is consistent with~\eqref{eq:invariant_measure_ABLM}.
Let us also emphasise that, in this section, the time parameter $t\in[0,1]$ is different from the time parameter $t>0$ appearing in the SPDEs \eqref{eq:SPDE_skew} and \eqref{eq:SPDE_NP}, where it rather corresponds to $x\in[0,1]$.

In the proofs presented in this section, the letter $C$ will denote a constant whose value is independent of all parameters at stake but whose value may change from line to line.

\subsection{Key propositions and proof of \cref{theorem:static_convergence}}

A key ingredient in our proof is the solution $X^\eps$ of the following SDE
\begin{equation}\label{eq:EDS_eps}
	\dd X_t^\eps=f_\eps(X_t^\eps)\,\dd t+\dd B_t\,\quad X_0^\eps=x\in\R\,,
\end{equation}
where $B$ is a standard Brownian motion and $f_\eps:\R\to\R_+$ is defined by
\begin{equation}\label{eq:feps}
f_\eps(x) := \frac1{\eps + x^+}, \qquad x\in\R, \quad \eps > 0.
\end{equation} 
We denote by $\mathbb P_x^{f_\eps}$ the law of $X^\eps$, and by $\mathbb P_{x,y}^{f_\eps}$ the law of the bridge of this diffusion from $(0,x)$ to $(1,y)$, for any $y\in\R$.
The key observation on which our proof relies relates the law of the bridge of this diffusion with the probability measure introduced in~\eqref{eq:invariant_measure_ndirac}:

\begin{proposition}\label{lemma:equality_measures}
		Fix $\kappa > 0$ and $x\in \R$, and set $\eps=(4\kappa)^{-1/2}$. Then
		\begin{equation*}
			\mathbb Q_{x,x}^{\kappa}=\mathbb P_{x,x}^{f_\eps}\,.
		\end{equation*}
\end{proposition}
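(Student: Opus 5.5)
Since $f_\eps$ is a gradient, the law of the diffusion is absolutely continuous with respect to Brownian motion with an explicit Girsanov density. Conditioning on the endpoint should then give the bridge law as a reweighted Brownian bridge. The task is to check that, with $\eps=(4\kappa)^{-1/2}$, this weight is exactly $\exp(-2\kappa\int_0^1\1_{\R_-})$ up to a constant.

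\textbf{Step 1: the potential.} Set $F_\eps(x)=\int_0^x f_\eps(y)\,\dd y$, so $F_\eps(x)=\log(1+x/\eps)$ for $x\ge0$ and $F_\eps(x)=x/\eps$ for $x<0$. Then $F_\eps$ is $C^1$ with absolutely continuous derivative and $F_\eps''=-(\eps+x)^{-2}\1_{x>0}$ a.e. Itô's formula (valid for $C^1$ functions with a.e. second derivative) gives
\[
\int_0^1 f_\eps(w_u)\,\dd w_u=F_\eps(w_1)-F_\eps(w_0)+\tfrac12\int_0^1 (\eps+w_u)^{-2}\1_{w_u>0}\,\dd u .
\]

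\textbf{Step 2: Girsanov density.} On $[0,1]$, $\mathbb P^{f_\eps}_x$ has density $\exp\bigl(\int_0^1 f_\eps(w)\dd w-\frac12\int_0^1 f_\eps(w)^2\dd u\bigr)$ with respect to $\mathbb W_x$. Non-explosion is clear because $0<f_\eps\le\eps^{-1}$, so the drift is bounded and Novikov applies. Substituting Step 1, the exponent is
\[
F_\eps(w_1)-F_\eps(x)+\tfrac12\int_0^1\bigl[(\eps+w_u)^{-2}\1_{w_u>0}-f_\eps(w_u)^2\bigr]\dd u .
\]
On $\{w>0\}$ the bracket vanishes, since $f_\eps^2=(\eps+w)^{-2}$. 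On $\{w\le0\}$ it equals $-\eps^{-2}$. The density is therefore
\[
\exp\bigl(F_\eps(w_1)-F_\eps(x)\bigr)\exp\Bigl(-\tfrac{1}{2\eps^2}\int_0^1\1_{\R_-}(w_u)\dd u\Bigr).
\]
The boundary point $w=0$ has zero Lebesgue time a.s., so the convention for $\1_{\R_-}$ there is irrelevant. Moreover $\frac1{2\eps^2}=2\kappa$ exactly when $\eps=(4\kappa)^{-1/2}$.

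\textbf{Step 3: disintegration.} I disintegrate both $\mathbb W_x$ and $\mathbb P_x^{f_\eps}$ with respect to $w_1$. The factor $e^{F_\eps(w_1)-F_\eps(x)}$ depends only on $(w_0,w_1)$, so conditionally on $w_1=y$ it is a constant. Hence
\[
\mathbb P^{f_\eps}_{x,y}(\dd w)=\frac{\exp(-2\kappa\int_0^1\1_{\R_-}(w_u)\dd u)}{\mathbb W_{x,y}[\exp(-2\kappa\int_0^1\1_{\R_-})]}\,\mathbb W_{x,y}(\dd w).
\]
Taking $y=x$ gives $\mathbb Q^\kappa_{x,x}$, and $Z^\kappa_{x,x}$ is identified as the denominator.

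\textbf{Main obstacle.} The only delicate point is that the bridge $\mathbb P^{f_\eps}_{x,y}$ must be defined for every $y$, not just almost every $y$. I would define it via the transition density $p^{f_\eps}_t$, as an $h$-transform on $[0,1)$. Then $\mathbb P^{f_\eps}_{x,y}$ restricted to $\mathcal F_s$ for $s<1$ has density $p^{f_\eps}_{1-s}(w_s,y)/p^{f_\eps}_1(x,y)$ with respect to $\mathbb P^{f_\eps}_x$. The same holds for the Brownian bridge with the heat kernel.

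From Step 2, $p^{f_\eps}_t(a,y)=e^{F_\eps(y)-F_\eps(a)}\,p_t(a,y)\,\mathbb W^{t}_{a,y}[e^{-2\kappa\int_0^t\1_{\R_-}}]$. Here $\mathbb W^t_{a,y}$ is the Brownian bridge of length $t$, and the right-hand side is continuous in $y$. Combining these identities with the Markov property gives the displayed equality on each $\mathcal F_s$, $s<1$. Letting $s\uparrow1$ then gives equality on $\mathcal F_1$, because the weight $e^{-2\kappa\int_0^1\1_{\R_-}}$ is bounded and both measures are supported on continuous paths ending at $y$.
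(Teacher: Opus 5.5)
Your proof is correct and follows the paper's route: the same potential $F_\eps$, the same It\^o--Tanaka identity $f_\eps'+f_\eps^2=\eps^{-2}\1_{\R_-}$ that turns the Girsanov density into $e^{F_\eps(w_1)-F_\eps(x)}\exp\bigl(-2\kappa\int_0^1\1_{\R_-}(w_u)\,\dd u\bigr)$, and the same disintegration along $w_1$. The only difference is how you pin down the bridge for every $y$ rather than almost every $y$: the paper invokes uniqueness of a weakly continuous disintegration $y\mapsto\mathbb P^{f_\eps}_{x,y}$, while you define the bridge as an $h$-transform with the continuous version of $p^{f_\eps}_t$ and check the identity on each $\mathcal F_s$, $s<1$, via the Markov property of the Brownian bridge, which is an equally valid and somewhat more explicit way to settle that point.
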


\cref{lemma:equality_measures} shows that the measure $\mathbb Q_{x,x}^{\kappa}$ is the law of an explicit diffusion process, and this will allow us to make use of Itô calculus to attack the proof of \eqref{eq:convergence_stationary_meas}.

The monotonicity in $\eps$ of the drift $f_\eps$ allows one to prove that $X^\eps$, starting from some $x\ge 0$, converges as $\eps\downarrow 0$ to the solution $X$ of
$$ \dd X_t=\frac1{X_t}\,\dd t+\dd B_t,\quad X_0=x\,.$$
The process $X$ is the $3$-dimensional Bessel process starting at $x$. In other words
\begin{equation}\label{Eq:CVBessel}
	\mathbb P_0^{f_\eps}\underset{\eps\downarrow0}{\Longrightarrow}\mathbb P^{3}_0\,.
\end{equation}
We will not provide the details on this convergence (but it follows from the ingredients provided later on, see the proof of~\cref{theorem:static_convergence}). 

Given the identity stated in \cref{lemma:equality_measures}, the statement of \cref{theorem:static_convergence} follows if we can prove the convergence stated in~\eqref{Eq:CVBessel} at the level of bridges from $0$ to $0$. To that end, let us recall a few facts on the $3$-dimensional Bessel process. It admits transition densities given by
\begin{equation}\label{Eq:TransDensBessel}
	\begin{split}
	p^{3-\mathrm{Bessel}}_t(x,y) &= \sqrt{\frac{2}{\pi t}} \,\frac{y}{x} \,e^{-\frac{x^2+y^2}{2t}}\,\sinh\left(\frac{xy}{t}\right)\,,\quad x,y > 0\,,\\
	p^{3-\mathrm{Bessel}}_t(0,y) &= \frac{\sqrt{2} y^2}{\sqrt{\pi}t^{3/2}}\,e^{-\frac{y^2}{2t}}\,,\quad y>0\,,
	\end{split}
\end{equation}
see~\cite[Chap XI.1, p.446]{Revuz1999}. Furthermore for any $x>0$ and any $t\in (0,1)$
\begin{equation}\label{Eq:p3ratio}
	\frac{p^{3-\mathrm{Bessel}}_{1-t}(x,0)}{p^{3-\mathrm{Bessel}}_1(0,0)} := \lim_{y\downarrow 0} \frac{p^{3-\mathrm{Bessel}}_{1-t}(x,y)}{p^{3-\mathrm{Bessel}}_1(0,y)} =  \frac1{(1-t)^{3/2}} e^{-\frac{x^2}{2(1-t)}}\,.
\end{equation}
With these quantities at hand, the marginals of the $3$-dimensional Bessel bridge admit explicit expressions: for any $n\ge 1$ and any $0 < t_1 < \cdots <t_n < 1$, the vector $(w_{t_1},\ldots, w_{t_n})$ under $\mathbb P^{3}_{0,0}$ admits the following density on $\R^n$
\begin{equation}\label{Eq:marginalsBesselBridge}
p^{3-\mathrm{Bessel}}_{t_1}(0,x_1)\dots p^{3-\mathrm{Bessel}}_{t_n-t_{n-1}}(x_{n-1},x_n)\,\frac{p^{3-\mathrm{Bessel}}_{1-t_n}(x_n,0)}{p^{3-\mathrm{Bessel}}_1(0,0)} \1_{\{x_1,\ldots,x_n > 0\}}\,,
\end{equation}
see~\cite[Chap XI.3, p.463]{Revuz1999}.

Our second ingredient is the convergence of the transition density of $X^\eps$ to that of the $3$-dimensional Bessel process as $\eps\downarrow0$:
\begin{proposition}\label{lemma:transition_densities_convergence}
	Fix $t>0$ and $\delta>0$.
	\begin{enumerate}
		\item \label{item:transition_densities1} Uniformly over all $x\ge \delta$ and $y> 0$
		\begin{equation*}
			p_t^{\eps}(x,y)\underset{\eps\downarrow0}{\longrightarrow}p^{3-\mathrm{Bessel}}_t(x,y)\,.
		\end{equation*}
		\item \label{item:transition_densities2} Uniformly over all $y\ge \delta$
		\begin{equation*}
			p_t^{\eps}(0,y)\underset{\eps\downarrow0}{\longrightarrow}p^{3-\mathrm{Bessel}}_t(0,y)\,.
		\end{equation*}
		\item \label{item:transition_densities3} Assume that $t<1$. Uniformly over all $x\ge \delta$, recalling \eqref{Eq:p3ratio}
		\begin{equation*}
			\frac{p_{1-t}^\eps(x,0)}{p_1^\eps(0,0)}\underset{\eps\downarrow0}{\longrightarrow}\frac{p^{3-\mathrm{Bessel}}_{1-t}(x,0)}{p^{3-\mathrm{Bessel}}_1(0,0)}\,.
		\end{equation*}
	\end{enumerate}
\end{proposition}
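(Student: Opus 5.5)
The plan is to write $X^\eps$ as a Doob $h$-transform of Brownian motion killed at a constant rate on the negative half-line, and to reduce all three statements to asymptotics of an explicit Feynman–Kac kernel. Let $h_\eps(x):=\eps+x$ for $x\ge0$ and $h_\eps(x):=\eps e^{x/\eps}$ for $x<0$. This function is $\mathcal C^1$, its derivative is absolutely continuous, and $h_\eps'/h_\eps=f_\eps$. Moreover $\tfrac12 h_\eps''=V_\eps h_\eps$ with $V_\eps:=\frac1{2\eps^2}\1_{\R_-}$, and with $\eps=(4\kappa)^{-1/2}$ we get $V_\eps=2\kappa\1_{\R_-}$. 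This is precisely the mechanism behind \cref{lemma:equality_measures}. By Itô–Tanaka and Girsanov (the martingale $h_\eps(w_t)h_\eps(x)^{-1}\exp(-\int_0^t V_\eps(w_s)\dd s)$ is a true martingale since $h_\eps$ grows at most linearly), we obtain
\begin{equation*}
p^\eps_t(x,y)=\frac{h_\eps(y)}{h_\eps(x)}\,q^\kappa_t(x,y),\qquad q^\kappa_t(x,y)\,\dd y:=\mathbb E^{\mathbb W_x}\Big[e^{-2\kappa\int_0^t\1_{\R_-}(w_s)\dd s};\,w_t\in\dd y\Big].
\end{equation*}
Similarly, $p^{3-\mathrm{Bessel}}_t(x,y)=\frac yx\,p^0_t(x,y)$, where $p^0_t(x,y)=g_t(y-x)-g_t(y+x)$ is the kernel of Brownian motion killed at $0$ and $g_t$ is the Gaussian kernel. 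All three items then become statements about $q^\kappa$ as $\kappa\to\infty$, which should be compared with $p^0$ and its derivatives at the boundary.

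\textbf{Item (1).} For $x\ge\delta$ and $y>0$, the prefactor $(\eps+y)/(\eps+x)$ converges to $y/x$, uniformly in $y$ once we check that $q^\kappa_t(x,y)$ decays like a Gaussian in $y$. So it suffices to show $\sup_{x\ge\delta,y>0}(1+y)\,|q^\kappa_t(x,y)-p^0_t(x,y)|\to0$.

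To prove this I would decompose at the first hitting time $\tau_0$ of $0$:
\begin{equation*}
q^\kappa_t(x,y)=p^0_t(x,y)+\int_0^t \phi_x(s)\,q^\kappa_{t-s}(0,y)\,\dd s,
\end{equation*}
where $\phi_x(s)=\frac{x}{\sqrt{2\pi s^3}}e^{-x^2/2s}$ is the density of $\tau_0$ under $\mathbb W_x$. It therefore remains to show that $q^\kappa_r(0,y)$ is small, of order $\eps$, in a sense integrable against $\phi_x$. This is also exactly what item (2) needs quantitatively. Since $h_\eps(0)=\eps$, item (2) reads
\begin{equation*}
\frac{\eps+y}{\eps}\,q^\kappa_t(0,y)\to y\,\partial_xp^0_t(0,y)=\frac{2y^2}{t}\,g_t(y),
\end{equation*}
and $\frac{2y^2}{t}g_t(y)$ indeed equals $p^{3-\mathrm{Bessel}}_t(0,y)$.

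\textbf{Explicit formulas.} To get the boundary asymptotics, I would use the explicit resolvent of $\tfrac12\partial^2-2\kappa\1_{\R_-}$. For $x,y\ge0$ and $\mu=\sqrt{2\lambda}$, $\nu=\sqrt{2\lambda+4\kappa}$,
\begin{equation*}
\int_0^\infty e^{-\lambda t}q^\kappa_t(x,y)\,\dd t=\frac1\mu\Big(e^{-\mu|x-y|}+\frac{\mu-\nu}{\mu+\nu}e^{-\mu(x+y)}\Big).
\end{equation*}
Here the coefficient $\frac{\mu-\nu}{\mu+\nu}=-1+\frac{2\mu}{\mu+\nu}$, and $\frac{2\mu}{\mu+\nu}$ behaves like $2\mu\eps$ up to corrections of order $\eps^2$. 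Inverting term by term shows that $q^\kappa_t(x,y)-p^0_t(x,y)$ equals $\eps\,\partial_x\partial_y$-type Gaussian terms plus an $O(\eps^2)$ remainder with Gaussian tails. Alternatively, one can use the classical explicit joint density of $(w_t,\int_0^t\1_{\R_-}(w_s)\dd s)$ and integrate against $e^{-2\kappa a}$, which gives a Laplace-type integral in $a$ whose expansion for large $\kappa$ is transparent. Either route yields
\begin{equation*}
q^\kappa_t(0,y)=\eps\,\partial_xp^0_t(0,y)\,(1+o(1)),\qquad q^\kappa_t(x,0)=\eps\,\partial_yp^0_t(x,0)\,(1+o(1)),\qquad q^\kappa_1(0,0)=\eps^2\,\partial_x\partial_yp^0_1(0,0)\,(1+o(1)),
\end{equation*}
uniformly over $y\ge\delta$, respectively $x\ge\delta$. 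By symmetry of $q^\kappa$, the second expansion follows from the first.

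\textbf{Item (3).} Using the formula above, the ratio is
\begin{equation*}
\frac{p^\eps_{1-t}(x,0)}{p^\eps_1(0,0)}=\frac{\eps}{\eps+x}\cdot\frac{q^\kappa_{1-t}(x,0)}{q^\kappa_1(0,0)}.
\end{equation*}
Plugging in the three expansions, this converges to
\begin{equation*}
\frac1x\cdot\frac{\partial_yp^0_{1-t}(x,0)}{\partial_x\partial_yp^0_1(0,0)}=\frac{1}{x}\cdot\frac{2x\,g_{1-t}(x)/(1-t)}{2g_1(0)}=(1-t)^{-3/2}e^{-x^2/2(1-t)},
\end{equation*}
which is the right-hand side of \eqref{Eq:p3ratio}.

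\textbf{Main obstacle.} I expect the hardest step to be the second-order boundary asymptotics $q^\kappa_1(0,0)\sim\eps^2\partial_x\partial_yp^0_1(0,0)$, together with uniform control of the remainders, because the leading terms cancel at both endpoints. I would first try to obtain it directly from the explicit occupation-time density: at $x=y=0$, that density (Lévy's arcsine-type formula with local time) is explicit, and after integrating against $e^{-2\kappa a}$ a Watson-lemma expansion in $a$ near $0$ gives the $\eps^2$ term with an explicit error. The uniformity over $x\ge\delta$ and $y\ge\delta$ should then follow because all the kernels involved are Gaussian with variance bounded by $t$, and $\delta$ keeps the arguments away from the boundary.
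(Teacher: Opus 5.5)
Your route is essentially the paper's. Since $h_\eps=\eps e^{F_\eps}$, your identity $p^\eps_t=\frac{h_\eps(y)}{h_\eps(x)}q^\kappa_t$ is \cref{lemma:transition_densities} with $q^\kappa_t(x,y)=g_t(y-x)\Phi_t(2\kappa;x,y)$. Your resolvent, inverted exactly rather than expanded, is precisely the Borodin--Salminen formula \eqref{eq:borodin}. Indeed, $\frac1\mu(e^{-\mu|x-y|}-e^{-\mu(x+y)})$ inverts to $p^0_t$. Next, $\frac{2}{\mu+\nu}=\frac{\nu-\mu}{2\kappa}$ is the Laplace transform of $t\mapsto\frac{1-e^{-2\kappa t}}{2\kappa\sqrt{2\pi}\,t^{3/2}}$. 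Finally, the factor $e^{-\mu(x+y)}$ is convolution with the first-passage density at level $x+y$, which produces $\frac{x+y}{2\pi}I_t$. In particular, the step you flag as the main obstacle is not one: at $x=y=0$ the resolvent reduces to $\frac{2}{\mu+\nu}$, so $q^\kappa_1(0,0)=\frac{1-e^{-2\kappa}}{2\kappa\sqrt{2\pi}}$ exactly. This is \eqref{Eq:x=y=0}, and no Watson lemma is needed.

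There are, however, two problems in the middle of your argument.

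\emph{First, your multiplicative expansions fail uniformly.} The claim $q^\kappa_t(0,y)=\eps\,\partial_xp^0_t(0,y)(1+o(1))$ uniformly in $y\ge\delta$, and its mirror in $x\ge\delta$, are false. Killing only decreases the kernel, so $q^\kappa_t(0,y)\le g_t(y)$, whereas $\eps\,\partial_xp^0_t(0,y)=\frac{2\eps y}{t}g_t(y)$. At $y=t/\eps$ the ratio is therefore at most $1/2$. Consequently your item (3) argument, as written, does not give uniformity over $x\ge\delta$. What is true, and sufficient for all three items via direct additive comparisons, is the additive statement of \cref{lemma:explicit_computations_Phi}: $q^\kappa_t(x,y)=p^0_t(x,y)+\sqrt2\,\eps\,\frac{x+y}{\sqrt\pi\,t^{3/2}}e^{-(x+y)^2/2t}+2\eps^2R$ with $|R|\le Kt^{-3/2}e^{-(x+y)^2/4t}$ uniformly in $x+y>0$. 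The Gaussian factors then absorb the prefactor $(\eps+y)/(\eps+x)$.

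\emph{Second, the key remainder bound is only asserted.} The uniform Gaussian-tailed bound on $R$ is the real content of the proof. Term-by-term inversion of $\frac{2\mu}{\mu+\nu}=2\mu\eps+O(\mu^2\eps^2)$ cannot justify it, because that expansion is valid only when your Laplace variable is small compared with $\kappa$. It therefore gives no control over short times, which is exactly the region $s$ near $t$ of the convolution. You have to estimate the exact integral $I_t$, as the paper does. Subtract the value of $s\mapsto s^{-3/2}e^{-(x+y)^2/2s}$ at $s=t$ to extract the order-$\eps$ term. Then bound the difference separately on $[0,t/2]$, where $(t-s)^{-3/2}\lesssim t^{-3/2}$, and on $[t/2,t]$, via a mean-value bound gaining a factor $t-s$. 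Each piece is bounded by $C\eps^2t^{-3/2}e^{-(x+y)^2/4t}$. Once this is done, your first-passage decomposition for item (1) is consistent with the formula but superfluous.
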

Our last proposition establishes a uniform control on the moments of a H\"older-norm of the bridges, thus implying tightness. One could prove tightness
directly without resorting to such a uniform control of moments, but this will be used in the proof of~\cref{prop:tightness}.
\begin{proposition}\label{lemma:tightness}
	For any $\eta \in (0,1/2)$ and any $p\ge 1$
	$$ \sup_{\eps \in (0,1]} \mathbb P_{0,0}^{f_\eps}[\|w\|_{\mathcal{C}^\eta}^p] < \infty\;.$$
\end{proposition}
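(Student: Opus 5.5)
The plan is to use Kolmogorov's continuity criterion, so it suffices to show that for every $p$ large there exists $C$ independent of $\eps\in(0,1]$ with
\[
\mathbb P^{f_\eps}_{0,0}\bigl[|w_t-w_s|^p\bigr]\le C|t-s|^{p/2},\qquad 0\le s<t\le 1 .
\]
Since $w_0=0$, this gives uniform moments of $\|w\|_{\mathcal C^\eta}$ for every $\eta<1/2-1/p$. To prove the bound, I would work with $\mathbb Q^\kappa_{0,0}$ through \cref{lemma:equality_measures} and exploit its two descriptions, as a reweighted Brownian bridge and as a diffusion bridge.

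By time reversal, the laws of $(w_{1-t})$ and $(w_t)$ coincide under $\mathbb Q^\kappa_{0,0}$, since both the weight and the Brownian bridge are reversal invariant. So it suffices to treat $s,t\in[0,3/4]$, with increments across $3/4$ handled by the triangle inequality and reversal. On $[0,3/4]$, the bridge of $X^\eps$ is the diffusion conditioned by the Doob transform $h(r,y)=p^\eps_{1-r}(y,0)$. Under $\mathbb P^{f_\eps}_{0,0}$, $w$ therefore solves
\[
\dd w_r=\dd B_r+\bigl(f_\eps(w_r)+\partial_y\log p^\eps_{1-r}(w_r,0)\bigr)\dd r .
\]
The increment $w_t-w_s$ is then a Brownian increment plus $\int_s^t$ of the drift, and the Burkholder–Davis–Gundy inequality handles the martingale part.

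For the drift I would avoid pointwise control of $f_\eps$, which blows up near $0$ when $\eps\to0$. Instead I would exploit the sign structure. The function $f_\eps$ is positive, and the drift pushes the path upward. A cleaner route is to compare the positive and negative parts of $w$ separately. For the negative part, the weight $e^{-2\kappa\int\1_{\R_-}}$ only penalises excursions below $0$, so by a comparison (FKG-type, or a direct computation) $w^-$ under $\mathbb Q^\kappa$ is dominated by excursions of a Brownian bridge. Its increments then have Brownian moments, and in fact vanish as $\kappa\to\infty$. For the positive part, I would use the Brownian bridge description directly. The density of $\mathbb Q^\kappa_{0,0}$ with respect to $\mathbb W_{0,0}$ equals $e^{-2\kappa\int\1_{\R_-}}/Z^\kappa$. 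On $\{w>0 \text{ on }(s',t')\}$ the increment is then controlled by conditioning on the values at the endpoints of the excursion straddling $[s,t]$, via the Markov property of the bridge. Conditionally on $w_{s'},w_{t'}$ and positivity, the piece between them is a Brownian bridge conditioned to stay positive, which is a $3$-Bessel bridge. Its increments satisfy $\mathbb E|w_t-w_s|^p\le C|t-s|^{p/2}$ uniformly in the endpoints, a standard Bessel-bridge estimate. Increments straddling a zero reduce to the one-sided cases, since $|w_t-w_s|\le |w_t|+|w_s|$ when a zero $r\in[s,t]$ exists, and $|w_t|=|w_t-w_r|$.

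The main obstacle is making these conditioning arguments uniform in $\kappa$, since $Z^\kappa_{0,0}\to0$. I would first try to get the needed lower bound $Z^\kappa_{0,0}\gtrsim \kappa^{-1/2}$, or $p^\eps_1(0,0)\gtrsim\eps$, from \cref{lemma:transition_densities_convergence}(iii) and its proof, and check that the conditional law above is exactly $\mathbb Q^\kappa$ restricted to the segment. Alternatively, via the Doob-transform SDE, one would need the bound $\partial_y\log p^\eps_{1-r}(y,0)\le C(1+|y|)$ uniformly for $r\le 3/4$, which I expect to follow from the explicit formulae for $p^\eps$ behind \cref{lemma:transition_densities_convergence}. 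Together with a supermartingale bound on $\int f_\eps(w_r)\dd r$ via Itô applied to $w^2$, which gives $\mathbb E[(\int_s^t f_\eps)^p]$ controlled through $\mathbb E[w_t^2-w_s^2]$, this would yield the estimate.
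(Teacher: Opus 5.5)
There is a genuine gap. Your outer reduction is fine: time reversal, then Kolmogorov's criterion applied to a bound $\mathbb E|w_t-w_s|^p\le C|t-s|^{p/2}$ that is uniform in $\eps$. But that increment bound is the whole content of the proposition, and none of the routes you sketch proves it.

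(i) FKG, with the increasing weight $e^{-2\kappa\int_0^1\1_{\R_-}(w_u)\,\dd u}$, gives at best stochastic domination, i.e.\ comparison of expectations of monotone functionals. Since $|w^-_t-w^-_s|^p$ is not monotone, nothing follows for the increments of $w^-$.

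(ii) The endpoints of the excursion straddling $[s,t]$ are random times at which $w=0$, and the left one is not a stopping time. So the Markov property of the bridge says nothing about the conditional law of that excursion. The fixed-time fact you invoke (a Brownian bridge between $w_{s'}>0$ and $w_{t'}>0$ conditioned to stay positive is a $3$-Bessel bridge) only applies on events $\{w>0\text{ on }[s',t']\}$ with deterministic $s',t'$, which do not cover the configurations you need. The same problem affects $|w_t|=|w_t-w_r|$ for a random zero $r\in[s,t]$.

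(iii) The scalings you hope to use are wrong. By \eqref{Eq:x=y=0} with $\lambda=2\kappa$, $Z^\kappa_{0,0}=\Phi_1(2\kappa;0,0)=(1-e^{-2\kappa})/(2\kappa)\asymp\kappa^{-1}$, equivalently $p^\eps_1(0,0)\asymp\eps^2$, not $\kappa^{-1/2}$ and $\eps$. A direct estimate of $\mathbb W_{0,0}[|w_t-w_s|^pe^{-2\kappa\int_0^1\1_{\R_-}(w_u)\,\dd u}]$ would therefore have to gain a full factor $\kappa^{-1}$.

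(iv) The Doob-transform route rests on $\partial_y\log p^\eps_{1-r}(y,0)\le C(1+|y|)$, which you only expect. By \eqref{eq:transition_densities} this quantity equals $-f_\eps(y)+\partial_y\log\bigl(g_{1-r}(y)\Phi_{1-r}(\eps^{-2}/2;y,0)\bigr)$. Near and below $0$ both terms are of size $\eps^{-1}$ and cancel at leading order, so you would need derivative bounds on the remainder $R$ of \cref{lemma:explicit_computations_Phi}, which is only controlled in size. Even granting the bound, $\int_s^tf_\eps(w_r)\,\dd r$ remains to be controlled. It\^o's formula for $w^2$ produces $\int_s^t2w_rf_\eps(w_r)\,\dd r$ instead, which says nothing about it.

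The paper avoids bridge-specific analysis altogether. On $\mathcal F_{t_0}$, $t_0=2/3$, the density of $\mathbb P^{f_\eps}_{0,0}$ with respect to $\mathbb P^{f_\eps}_0$ is $p^\eps_{1-t_0}(w_{t_0},0)/p^\eps_1(0,0)$. H\"older's inequality with $q_1\in(1,2)$ close to $1$ then reduces the claim to two facts:
\begin{itemize}
\item uniform increment moments for the \emph{unconditioned} diffusion (\cref{lemma:holder_tightness_unconditionned}), obtained by comparison with $B$ and a $3$-Bessel process on $\{X^\eps_s\ge0\}$, plus the bound $\int_{-\infty}^0|x|^pp^\eps_s(0,x)\,\dd x\le C\eps^p$ on the negative side;
\item a uniform $L^{q_1}$ bound on that density, which holds because, by \cref{lemma:explicit_computations_Phi} and \cref{lemma:explicit_computations_Phi2}, the integral \eqref{eq:proof_tightness} is $O(\eps^{2q_1})$, exactly compensating $p^\eps_1(0,0)^{-q_1}\asymp\eps^{-2q_1}$.
\end{itemize}

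If you want to keep your excursion picture, the missing observation is this. The density $e^{-2\kappa\int_0^1\1_{\R_-}(w_u)\,\dd u}/Z^\kappa_{0,0}$ is measurable with respect to $\mathcal G:=\sigma(\mathrm{sgn}(w_u),\,u\in[0,1])$, i.e.\ it depends only on the zero set and the signs of the excursions. Hence, under $\mathbb Q^\kappa_{0,0}$ and conditionally on $\mathcal G$, the excursions are, exactly as under $\mathbb W_{0,0}$, independent Brownian excursions of the prescribed lengths, negative ones included. A Brownian excursion of length $L$ is $(\sqrt L\,|\mathbf b_{u/L}|)_{u\le L}$ with $\mathbf b$ a $3$-dimensional Brownian bridge. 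This gives $\mathbb E[|w_t-w_s|^p\mid\mathcal G]\le C_p|t-s|^{p/2}$ when $[s,t]$ lies inside one excursion. Using $|w_t-w_s|\le|w_s|+|w_t|$ and time reversal of the excursion, the same bound holds when $[s,t]$ contains a zero. It is uniform in $\kappa$ with no estimate on $Z^\kappa_{0,0}$, and makes FKG and the Doob transform unnecessary.
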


With all these intermediate results at hand, we can complete the proof of~\cref{theorem:static_convergence}.
\begin{proof}[Proof of~\cref{theorem:static_convergence}]
	By~\cref{lemma:equality_measures} we know that $\mathbb P_{0,0}^{f_\eps}$ coincides with $\mathbb Q_{0,0}^{\kappa}$ provided $\kappa = \eps^{-2}/ 4$. We thus need to prove that $\mathbb P_{0,0}^{f_\eps}$ converges to $\mathbb P^3_{0,0}$ as $\eps \downarrow 0$. This is carried out below.\\
	\textbf{Convergence of the finite dimensional distributions.} Fix $n\ge1$, $t_1<\dots<t_n\in(0,1)$ and $a_1<b_1,\dots,a_n<b_n\in(0,\infty)$. Then
	\begin{equation*}
	\begin{split}
		&\mathbb P_{0,0}^{f_\eps}\left(w_{t_1}\in [a_1,b_1],\dots,w_{t_n}\in [a_n,b_n]\right)\\
		=&\int_{a_1}^{b_1}\dots\int_{a_n}^{b_n}p^{\eps}_{t_1}(0,x_1)\dots p^{\eps}_{t_n-t_{n-1}}(x_{n-1},x_n)\,\frac{p^{\eps}_{1-t_n}(x_n,0)}{p^{\eps}_1(0,0)}\, \dd x_1\dots\dd x_n\\
		\underset{\eps\downarrow0}{\longrightarrow}&\int_{a_1}^{b_1}\dots\int_{a_n}^{b_n}p^{3-\mathrm{Bessel}}_{t_1}(0,x_1)\dots p^{3-\mathrm{Bessel}}_{t_n-t_{n-1}}(x_{n-1},x_n)\,\frac{p^{3-\mathrm{Bessel}}_{1-t_n}(x_n,0)}{p^{3-\mathrm{Bessel}}_1(0,0)}\, \dd x_1\dots\dd x_n\\
		=&\mathbb P^3_{0,0}\left(w_{t_1}\in [a_1,b_1],\dots,w_{t_n}\in [a_n,b_n]\right)
	\end{split}
	\end{equation*}
	by the Dominated Convergence Theorem and~\cref{lemma:transition_densities_convergence}. This establishes the convergence of the marginals at times $0<t_1 < \cdots < t_n<1$ on any product of closed intervals of $(0,\infty)$. By a monotone class argument, this extends to any Borel set of $(0,\infty)^n$. Since for all $t\in (0,1)$, $\mathbb P^3_{0,0}$-a.s.~$w_t > 0$, the convergence remains true for any Borel set of $\R^n$. We have thus proven the convergence of the marginals at times $t_1 < \cdots <t_n$, provided $t_1 > 0$ and $t_n <1$. The extension to $t_1= 0$ and/or $t_n = 1$ is straightforward since under either $\mathbb P_{0,0}^{f_\eps}$ or $\mathbb P^3_{0,0}$, $w_0=w_1=0$ a.s.
	
	\smallskip
	
	\noindent \textbf{Tightness.} Tightness is a consequence of~\cref{lemma:tightness}. By the Markov inequality, for every $K>0$, for all $p\ge1$, for every $\eps\in(0,1]$,
	\[
		\mathbb P_{0,0}^{f_\eps}[\|w\|_{\mathcal C^q}> K]\le K^{-p}\,\mathbb P_{0,0}^{f_\eps}[\|w\|_{\mathcal C^q}^p]
	\]
	for some $q>0$ given by~\cref{lemma:tightness} such that $\sup_{\eps\in(0,1]}\mathbb P_{0,0}^{f_\eps}[\|w\|_{\mathcal C^q}^p]<\infty$. We conclude using the Arzelà-Ascoli theorem, taking $K$ large enough.
	\end{proof}

The remaining of this section is devoted to the proofs of \cref{lemma:equality_measures}, \cref{lemma:transition_densities_convergence} and~\cref{lemma:tightness}.

\subsection{Transition densities of the diffusion}

We start by identifying the Radon-Nikodym derivative of the law of the diffusion $X^\eps$ w.r.t.~the law of the Brownian motion. Let $F_\eps:\R\to\R$ be defined by
\[
F_\eps(x)=\ln\left(\frac{\eps+x}\eps\right)\1_{(x>0)}+\frac x\eps\1_{(x\le 0)},
\]
so that $F_\eps' = f_\eps$. Note that
\[
e^{F_\eps(x)}=\frac{\eps+x}\eps\1_{(x>0)}+e^{x/\eps}\,\1_{(x\le 0)}.
\]
\begin{lemma}\label{lemma:law_of_X_eps}
	Fix $\eps>0$. For any $t\in [0,1]$, the law of $X^{\eps}$ satisfies
	\begin{equation}\label{eq:law_of_X_eps}
		\left.\frac{\dd \mathbb P_x^{f_\eps}}{\dd \mathbb W_x}\right|_{\mathcal F_t}(w):=\exp\left(
		F_\eps(w_t)-F_\eps(x)-\frac{\eps^{-2}}{2}\int_0^t\1_{(w_s\le0)}\,\dd s\right)\,.
	\end{equation}
\end{lemma}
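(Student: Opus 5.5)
The plan is a Girsanov argument combined with Itô's formula for $F_\eps$. Under $\mathbb W_x$ the canonical process is a Brownian motion. By Girsanov's theorem, the law of $X^\eps$ restricted to $\mathcal F_t$ has density
\[
\exp\Big(\int_0^t f_\eps(w_s)\,\dd w_s-\frac12\int_0^t f_\eps(w_s)^2\,\dd s\Big)
\]
with respect to $\mathbb W_x$. This holds provided the exponential local martingale is a true martingale. Since $0\le f_\eps\le \eps^{-1}$ is bounded, Novikov's criterion applies directly. The SDE \eqref{eq:EDS_eps} has a bounded Lipschitz drift ($f_\eps$ is Lipschitz with constant $\eps^{-2}$), so it has a unique strong solution and its law is characterised, which justifies identifying $\mathbb P^{f_\eps}_x$ with the Girsanov-tilted measure.

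The next step is to remove the stochastic integral with Itô's formula. The function $F_\eps$ is $C^1$, with $F_\eps'=f_\eps$ continuous at $0$ (both sides equal $\eps^{-1}$). Its derivative $f_\eps$ is absolutely continuous: away from $0$ one has
\[
f_\eps'(y)=-(\eps+y)^{-2} \text{ for } y>0, \qquad f_\eps'(y)=0 \text{ for } y<0.
\]
So $F_\eps$ is $C^1$ with a locally bounded second derivative defined a.e. The Itô formula for such functions (via Itô–Tanaka or the occupation times formula, since there is no jump in $F_\eps'$ and hence no local time term) gives
\[
F_\eps(w_t)-F_\eps(x)=\int_0^t f_\eps(w_s)\,\dd w_s+\frac12\int_0^t f_\eps'(w_s)\,\dd s .
\]
Substituting this into the exponent, the density becomes
\[
\exp\Big(F_\eps(w_t)-F_\eps(x)-\frac12\int_0^t\big(f_\eps'+f_\eps^2\big)(w_s)\,\dd s\Big).
\]

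The key computation, and the only nontrivial point, is the Riccati-type identity $f_\eps'+f_\eps^2$. For $y>0$,
\[
f_\eps'(y)+f_\eps(y)^2=-(\eps+y)^{-2}+(\eps+y)^{-2}=0.
\]
For $y<0$,
\[
f_\eps'(y)+f_\eps(y)^2=0+\eps^{-2}.
\]
Hence $f_\eps'+f_\eps^2=\eps^{-2}\1_{(y\le0)}$ for Lebesgue-a.e. $y$. The value at $y=0$ is irrelevant because, by the occupation times formula, Brownian motion spends zero Lebesgue time at $0$. This yields \eqref{eq:law_of_X_eps}.

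The main point of care is the justification of Itô's formula for the non-$C^2$ function $F_\eps$. I would handle it by mollifying $F_\eps$, or by citing the generalized Itô formula for differences of convex functions: $F_\eps'$ is continuous and of bounded variation locally, and its second-derivative measure has no atom at $0$.
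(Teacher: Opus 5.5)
Your proposal is correct and follows essentially the same route as the paper: Novikov's criterion (using $f_\eps\le\eps^{-1}$), Girsanov's theorem, the It\^o--Tanaka formula for the $C^1$ concave function $F_\eps$ (no local-time term since $F_\eps'=f_\eps$ is continuous), and the identity $f_\eps'+f_\eps^2=\eps^{-2}\1_{(y\le 0)}$ a.e. Your extra remarks on strong uniqueness and on the irrelevance of the value at $0$ only make explicit points the paper leaves implicit.
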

\begin{proof}
	Recall that under $\mathbb W_x$ the canonical process $w$ is a Brownian motion starting from $x$. We set $L_t=\int_0^tf_\eps(w_s)\,\dd w_s$.
	As $f_\eps$ is bounded by $\eps^{-1}$, one has that
	\begin{equation*}
		\mathbb E\left[\exp\left(\frac12\langle L\rangle_1\right)\right]=\mathbb E\left[\exp\left(\frac12\int_0^1f_\eps^2(w_s)\,\dd s\right)\right]\le\exp\left(\frac{\eps^{-2}}{2}\right)<\infty\,,
	\end{equation*}
	and by Novikov's criterion~\cite[Prop. VIII.1.15]{Revuz1999}, the exponential martingale $\mathcal E(L)_t:=\exp\left(L_t-\frac12\langle L\rangle_t\right)$, $t\in[0,1]$, is a true martingale starting from $1$ at time $0$. Consequently, by Girsanov's Theorem, the process $w'$ defined by 
	\begin{equation}\label{eq:SDE_girsanov}
		w'_t:=w_t-\int_0^tf_\eps(w_s)\,\dd s\,,\quad t\in[0,1]\,,
	\end{equation}
	is a Brownian motion under $\tilde{\mathbb W}_x(\dd w) := \mathcal E(L)_1(w) \, \mathbb W_x(\dd w)$. In particular under $\tilde{\mathbb W}_x$, $w$ has the same law as the solution $X^\eps$ of~\eqref{eq:EDS_eps}. To conclude the proof, we only need to prove that almost surely for any $t\in [0,1]$
	\begin{equation}\label{eq:RN_density_after_ito_trick}
		\mathcal E(L)_t(w)=\exp\left(
		F_\eps(w_t)-F_\eps(x)-\frac{\eps^{-2}}{2}\int_0^t\1_{(w_s\le0)}\,\dd s\right)\,.
	\end{equation}
	Recall that
	\begin{equation}\label{eq:RN_density_before_ito_trick}
		\mathcal E(L)_t(w)=\exp\left(\int_0^t f_\eps(w_s)\,\dd w_s-\frac12\int_0^t f_\eps^2(w_s)\,\dd s\right)\,.
	\end{equation}
	The function $f_\eps$ is $C^1$ except at $x=0$ where the derivative has a discontinuity:
	$$ f_\eps'(x) = -\frac{\1_{(x>0)}}{(\eps+x)^2}\;.$$
	Since $f_\eps'\le 0$, we deduce that $F_\eps$ is concave. By the It\^o-Tanaka formula~\cite[Th VI.1.5]{Revuz1999}, we deduce that almost surely for any $t\in [0,1]$
	\begin{equation}\label{eq:ito_trick2}
	F_\eps(w_t)-F_\eps(x) = \int_0^t f_\eps(w_s)\,\dd w_s + \frac12\int_0^t f_\eps'(w_s)\,\dd s\,.
	\end{equation}
	Notice that for every $x\neq0$,
	\begin{equation}\label{eq:RN_identity}
		f_\eps'(x)+f_\eps^2(x)=-\frac{\1_{(x>0)}}{(\eps+x^+)^2}+\frac{1}{(\eps+x^+)^2}=\frac{\1_{(x\le 0)}}{(\eps+x^+)^2}=\eps^{-2}\1_{(x\le 0)}\,.
	\end{equation}
	Plugging~\eqref{eq:ito_trick2} and~\eqref{eq:RN_identity} into~\eqref{eq:RN_density_before_ito_trick},
	we obtain~\eqref{eq:RN_density_after_ito_trick}.
\end{proof}

Next, we establish a tractable expression for the transition kernel of $X^{\eps}$.  To that end, we let $g_t(x)$, $x\in\R$, be the density of the $\mathcal N(0,t)$ law. We also set for any $\lambda>0$ and any $x,y \in \R$
\begin{equation}\label{Eq:Phi}
	\Phi_t(\lambda;x,y)=\mathbb W_{(0,x),(t,y)}\left[\exp\left(-\lambda\int_0^t\1_{\R_-}(w_s)\,\dd s\right)\right],
\end{equation}
where $\mathbb W_{(0,x),(t,y)}$ is the law of the Brownian bridge starting from $x$ at time $0$ and ending at $y$ at time $t$. Note that $\mathbb W_{(0,x),(1,y)}$ coincides with $\mathbb W_{x,y}$.

\begin{lemma}\label{lemma:transition_densities}
	Fix $\eps>0$ and $t\in (0,1]$. One has for any $x,y\in\R$,
	\begin{equation}\label{eq:transition_densities}
		p_t^{\eps}(x,y)=
		\frac{e^{F_\eps(y)}}{e^{F_\eps(x)}}\, g_t(y-x) \, \Phi_t(\eps^{-2}/2;x,y)\,.
	\end{equation}
\end{lemma}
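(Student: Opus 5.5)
The formula follows from the Girsanov density of \cref{lemma:law_of_X_eps}, combined with disintegrating Brownian motion along its endpoint. \cref{lemma:law_of_X_eps} is stated for $t\in[0,1]$, and the identity holds on $\mathcal F_t$ for each fixed $t$. For a bounded measurable test function $h:\R\to\R$ we therefore have
\[
\mathbb P_x^{f_\eps}[h(w_t)] = \mathbb W_x\Bigl[h(w_t)\,\exp\Bigl(F_\eps(w_t)-F_\eps(x)-\frac{\eps^{-2}}{2}\int_0^t\1_{(w_s\le0)}\,\dd s\Bigr)\Bigr].
\]

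Next we condition on the endpoint $w_t$. Under $\mathbb W_x$, $w_t$ has density $g_t(y-x)$. Given $w_t=y$, the path $(w_s)_{s\in[0,t]}$ is distributed as the Brownian bridge $\mathbb W_{(0,x),(t,y)}$; this is the standard regular version of the conditional law, which is weakly continuous in $y$. The factor $e^{F_\eps(w_t)-F_\eps(x)}$ is measurable with respect to $w_t$, so it can be pulled out. The remaining functional $\exp(-\frac{\eps^{-2}}{2}\int_0^t\1_{\R_-}(w_s)\dd s)$ has conditional expectation $\Phi_t(\eps^{-2}/2;x,y)$ by definition \eqref{Eq:Phi}. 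Note that $\1_{(w_s\le 0)}=\1_{\R_-}(w_s)$ under the paper's convention $\R_-=(-\infty,0]$. This gives
\[
\mathbb P_x^{f_\eps}[h(w_t)]=\int_\R h(y)\,\frac{e^{F_\eps(y)}}{e^{F_\eps(x)}}\,g_t(y-x)\,\Phi_t(\eps^{-2}/2;x,y)\,\dd y,
\]
which identifies the transition density as claimed, for Lebesgue-a.e.\ $y$.

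The only delicate point is turning this almost-everywhere identification into a pointwise statement for every $x,y$. One option is simply to define $p^\eps_t$ by the right-hand side of \eqref{eq:transition_densities}; this is the version used later for bridges. To justify that this is the natural choice, I would check that the right-hand side is continuous in $y$. The maps $g_t$ and $F_\eps$ are continuous. Continuity of $\Phi_t$ in $(x,y)$ follows by representing the bridge as $x+(y-x)s/t+b_s$ with $b$ a standard bridge on $[0,t]$. Then $\int_0^t\1_{\R_-}(w_s)\dd s$ is a.s.\ continuous in $(x,y)$, because the bridge spends zero Lebesgue time at any fixed level, and dominated convergence applies since the integrand is bounded by $1$.

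With continuity in hand, the continuous version of the density is unique and equals the formula. Consistency with the Chapman–Kolmogorov relations is then inherited from the Markov property of $X^\eps$. This continuity step is the only mild obstacle; the remainder is a direct consequence of \cref{lemma:law_of_X_eps}.
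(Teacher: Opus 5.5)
Your proposal is correct and follows the paper's proof. Like the paper, you apply the Girsanov density of \cref{lemma:law_of_X_eps} on $\mathcal F_t$, disintegrate $\mathbb W_x$ along $w_t$, and recognise the conditional expectation as $\Phi_t(\eps^{-2}/2;x,y)$. Your extra step, choosing the version continuous in $y$ via continuity of $\Phi_t$, is a sound refinement the paper leaves implicit; it is what justifies later pointwise evaluations such as $p^\eps_1(0,0)$.
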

\begin{proof}
	Using \cref{lemma:law_of_X_eps}, we deduce that for every bounded measurable $\phi:\R\to\R$
	\begin{align*}
		&\mathbb P_x^{f_\eps}\left[\phi(w_t)\right]\\
		=&\mathbb W_x\left[\exp\left(
		F(w_t)-F(x)-\frac{\eps^{-2}}{2}\int_0^t\1_{(w_s\le0)}\,\dd s\right)\phi(w_t)\right]\\
		=&\int_{\R}\, \dd y\,g_t(y-x)\mathbb W_x\left[\left.\exp\left(
		F(w_t)-F(x)-\frac{\eps^{-2}}{2}\int_0^t\1_{(w_s\le0)}\,\dd s\right)\phi(w_t)
		\,\right|\,w_t=y\right]\\
		=&\int_{\R}\, \dd y\,\phi(y) \,
		\frac{e^{F_\eps(y)}}{e^{F_\eps(x)}}\,g_t(y-x) \,\mathbb W_{(0,x),(t,y)}\left[\exp\left(-\frac{\eps^{-2}}{2}\int_0^t\1_{(w_s\le0)}\,\dd s\right)\right].
	\end{align*}
By the definition of $\Phi_t$ in~\eqref{Eq:Phi}, the result follows.
\end{proof}

\noindent We can now prove the equality between $\mathbb P_{x,x}^{f_\eps}$ and $\mathbb Q_{x,x}^{\kappa}$, provided $\eps=(4\kappa)^{-1/2}$.
\begin{proof}[Proof of~\cref{lemma:equality_measures}]
	The map $y\mapsto \mathbb P_{x,y}^{f_\eps}$ is the unique continuous map w.r.t.~the topology of weak convergence of probability measures that satisfies
	$$ \mathbb P_{x}^{f_\eps}(\dd w) = \int_{y\in\R} \mathbb P_{x,y}^{f_\eps}(\dd w) \, p_1^\eps(x,y)\, \dd y\;.$$
	On the other hand, using the result of~\cref{lemma:law_of_X_eps} and disintegrating the measure $\mathbb W_x$ w.r.t.~the marginal at time $1$, we find
	\begin{align*}
		\mathbb P_{x}^{f_\eps}(\dd w) &=\exp\left(
		F_\eps(w_1)-F_\eps(x)-\frac{\eps^{-2}}{2}\int_0^1\1_{(w_s\le0)}\,\dd s\right)\mathbb W_x(\dd w)\\
		&= \int_{y\in\R} \exp\left(
		F_\eps(w_1)-F_\eps(x)-\frac{\eps^{-2}}{2}\int_0^1\1_{(w_s\le0)}\,\dd s\right)\mathbb W_{x,y}(\dd w) \,g_1(x-y)  \,\dd y\\
		&= \int_{y\in\R}  \frac{\exp\left(-\frac{\eps^{-2}}{2}\int_0^1\1_{(w_s\le0)}\,\dd s\right)\mathbb W_{x,y}(\dd w)}{\Phi_1(\eps^{-2}/2;x,y)} \, p_1^\eps(x,y)\, \dd y\,,
	\end{align*}
where the last line comes from~\cref{lemma:transition_densities}. We deduce that for all $y\in\R$
$$ \mathbb P_{x,y}^{f_\eps}(\dd w) = \frac{\exp\left(-\frac{\eps^{-2}}{2}\int_0^1\1_{(w_s\le0)}\,\dd s\right)\mathbb W_{x,y}(\dd w)}{\Phi_1(\eps^{-2}/2;x,y)}\,.$$
Specializing to $y=x$, we obtain the desired result.
\end{proof}

To exploit the identity~\eqref{eq:transition_densities} on the density of the diffusion, we need to collect estimates on the function $\Phi_t$. This is the purpose of the next two lemmas, whose proofs are postponed to~\cref{Subsec:EstimatesDensity}. First, we consider the situation where $x,y \ge 0$ and obtain an expansion in powers of $\lambda$ (one should think of $\lambda$ large since, in fine, it will be taken equal to $\eps^{-2}/2$).
\begin{lemma}\label{lemma:explicit_computations_Phi}
	For all $\lambda > 0$, $t > 0$ and $x,y \ge 0$ such that $x+y>0$, we have
	\begin{equation}\label{eq:explicit_computations_Phi}
		g_t(y-x)\,\Phi_t(\lambda;x,y)=\sqrt{\frac{2}{\pi t}}\,e^{-\frac{x^2+y^2}{2t}}\sinh\left(\frac{xy}{t}\right) + \frac1{\sqrt{\lambda}} \frac{x+y}{t^{3/2}\sqrt{\pi}}\,e^{-\frac{(x+y)^2}{2t}}+\frac1{\lambda} R(\lambda,t,x+y)\,,
	\end{equation}
	where $R(\lambda,t,x+y)$ satisfies
	\begin{equation}\label{Eq:R}
		|R(\lambda,t,x+y)| \le K \frac1{t^{3/2}} e^{-\frac{(x+y)^2}{4t}} \,,
	\end{equation}
	for some constant $K>0$ independent of $x+y>0$, $t>0$ and $\lambda>0$.
	
	Furthermore, for all $\lambda > 0$ and $t>0$, we have
	\begin{equation}\label{Eq:x=y=0}
		g_t(0)\,\Phi_t(\lambda;0,0)=\frac{1-e^{-\lambda t}}{\lambda t^{3/2} \sqrt{2\pi}}\,.
	\end{equation}
\end{lemma}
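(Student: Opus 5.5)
We work with the unnormalised quantity $G(\lambda;t,x,y):=g_t(y-x)\Phi_t(\lambda;x,y)=\mathbb W_x\bigl[e^{-\lambda\int_0^t\1_{\R_-}(w_s)\dd s};\,w_t\in\dd y\bigr]/\dd y$, i.e.\ the density of Brownian motion killed at rate $\lambda$ on the negative half-line. The plan is to decompose according to whether the path hits $0$ before time $t$. For $x,y\ge0$, the paths that never hit $0$ contribute exactly the killed-at-zero heat kernel
\[
g_t(y-x)-g_t(x+y)=\sqrt{\tfrac{2}{\pi t}}\,e^{-\frac{x^2+y^2}{2t}}\sinh\bigl(\tfrac{xy}{t}\bigr)
\]
by the reflection principle. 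This is the leading term in \eqref{eq:explicit_computations_Phi}. Paths that do hit $0$ are split by the strong Markov property at the first hitting time $T_0$ (with density $\frac{x}{\sqrt{2\pi s^3}}e^{-x^2/2s}$) and, by time reversal, at the last zero before $t$. The remainder is then
\[
\int_{s+u<t} h_x(s)\,q_\lambda(t-s-u)\,h_y(u)\,\dd s\,\dd u,
\]
where $h_z$ is the first-passage density and $q_\lambda(r)$ is the density at $0$ of the killed process started from $0$, run for time $r$. The second identity \eqref{Eq:x=y=0} is exactly $q_\lambda(t)$, so it should be proved first.

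\textbf{Step 1: computing $q_\lambda$ by Laplace transform in time.} The resolvent $\int_0^\infty e^{-\mu t}G(\lambda;t,0,0)\,\dd t$ solves $\frac12 u''-(\mu+\lambda\1_{\R_-})u=-\delta_0$. Its value at $0$ is
\[
\frac{2}{\sqrt{2\mu}+\sqrt{2(\mu+\lambda)}}=\frac{\sqrt{2(\mu+\lambda)}-\sqrt{2\mu}}{\lambda}.
\]
The inverse Laplace transform of $\sqrt{\mu+\lambda}-\sqrt\mu$ is $\frac{1-e^{-\lambda t}}{2\sqrt\pi\,t^{3/2}}$, which gives $\frac{1-e^{-\lambda t}}{\lambda\sqrt{2\pi}\,t^{3/2}}$, i.e.\ \eqref{Eq:x=y=0}. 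Alternatively, one can use the arcsine-type law for the occupation time of a bridge, which is uniform on $[0,t]$: this gives $\Phi_t(\lambda;0,0)=\frac{1-e^{-\lambda t}}{\lambda t}$ directly.

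\textbf{Step 2: the general case.} The same resolvent approach gives, for $x,y\ge0$, a Laplace transform in $t$ of the hitting part equal to
\[
e^{-\sqrt{2\mu}(x+y)}\,\frac{\sqrt{2(\mu+\lambda)}-\sqrt{2\mu}}{\lambda}.
\]
This depends only on $x+y$, which explains the dependence of the remainder $R$ on $x+y$. Expanding in $\lambda$ gives
\[
\frac{\sqrt{2(\mu+\lambda)}-\sqrt{2\mu}}{\lambda}=\sqrt{2/\lambda}-\frac{2\sqrt{\mu}}{\lambda}\cdot(\text{bounded correction}),
\]
more precisely $\sqrt2\bigl(\sqrt{\lambda+\mu}-\sqrt\mu\bigr)/\lambda=\sqrt{2/\lambda}-\sqrt{2\mu}/\lambda+\ldots$. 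The term $\sqrt{2/\lambda}\,e^{-\sqrt{2\mu}z}$ inverts to $\sqrt{2/\lambda}\,h_z(t)=\frac{z}{\sqrt{\pi\lambda}\,t^{3/2}}e^{-z^2/2t}$ with $z=x+y$, which is exactly the second term of \eqref{eq:explicit_computations_Phi}.

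\textbf{Step 3: the remainder.} In the time domain, the remainder is better handled directly as the convolution
\[
\int_0^t h_z(s)\,\Bigl[q_\lambda(t-s)-\tfrac{1}{\sqrt{2\pi\lambda}}\ldots\Bigr]\,\dd s .
\]
Equivalently, one writes $q_\lambda(r)=\frac{1-e^{-\lambda r}}{\lambda\sqrt{2\pi}r^{3/2}}$ and compares $\int_0^t h_z(s)q_\lambda(t-s)\dd s$ with $\lambda^{-1/2}\sqrt2\,h_z(t)$. One uses $\int_0^\infty q_\lambda(r)\,\dd r=\sqrt{2/\lambda}$; this is the $\mu=0$ value of the Laplace transform, and indeed $\int_0^\infty\frac{1-e^{-\lambda r}}{r^{3/2}}\dd r=2\sqrt{\pi\lambda}$. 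The error then splits into three pieces:
\begin{enumerate}[(i)]
\item the tail $\int_t^\infty q_\lambda$, which is at most $\frac{C}{\lambda\sqrt t}$;
\item the increment $h_z(t-r)-h_z(t)$, controlled using $q_\lambda(r)\le\frac{C}{\lambda r^{3/2}}$ together with the bound $\sqrt{r}\,|\partial_sh_z|$;
\item the part $r$ near $t$, where $h_z$ is small.
\end{enumerate}

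\textbf{Main obstacle.} The key difficulty is obtaining these bounds uniformly in $z$ with the Gaussian factor $t^{-3/2}e^{-z^2/4t}$. For this one uses $z^k s^{-k/2}e^{-z^2/2s}\le C_k e^{-z^2/4s}$ and the monotonicity of $s\mapsto e^{-z^2/4s}$, which absorbs the polynomial prefactors produced when differentiating $h_z$. The singularity of $q_\lambda$ near $r=0$ is integrable against the increment of $h_z$ because $|h_z(t-r)-h_z(t)|\lesssim r\,t^{-5/2}e^{-z^2/4t}$ for $r\le t/2$. This splitting at $r=t/2$ is where the computation needs care.
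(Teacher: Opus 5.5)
Your proposal is correct. Its estimates are the paper's; the difference is upstream, in how the convolution formula is obtained.

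\textbf{Comparison with the paper.} Your hitting part $\int_0^t h_{x+y}(s)\,q_\lambda(t-s)\,\dd s$ is exactly the paper's term $\frac{x+y}{2\pi}I_t$. Your use of $\int_0^\infty q_\lambda=\sqrt{2/\lambda}$ together with the tail (i) is the paper's treatment of $I^{(1)}_t$ and $R^{(1)}$. Your pieces (ii) and (iii) are the paper's $I^{(3)}_t$ (small $t-s$, mean value theorem on $s\mapsto s^{-3/2}e^{-(x+y)^2/2s}$) and $I^{(2)}_t$ (large $t-s$, where $q_\lambda\le C\lambda^{-1}t^{-3/2}$).

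The paper simply imports the convolution representation, and \eqref{Eq:x=y=0}, from Borodin--Salminen, Formula 1.1.5.7. You derive it instead, via:
\begin{enumerate}[(a)]
\item the reflection principle for paths avoiding $0$;
\item the strong Markov property at $T_0$;
\item time reversal (symmetry of the killed kernel) for the last-zero decomposition;
\item $h_x*h_y=h_{x+y}$;
\item $q_\lambda$ from the resolvent ODE, or more quickly from L\'evy's result that the time a Brownian bridge spends negative is uniform on $[0,t]$.
\end{enumerate}
This makes the lemma self-contained. It also explains structurally why the remainder depends only on $x+y$, and why the $\lambda^{-1/2}$ term is $\sqrt{2/\lambda}\,h_{x+y}(t)$. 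The paper's route is shorter, at the cost of relying on a table formula plus a disintegration.

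\textbf{A correction in Step 3.} Your increment bound has the wrong power of $t$. For $r\le t/2$ it should read $|h_z(t-r)-h_z(t)|\lesssim r\,t^{-2}e^{-z^2/4t}$, not $r\,t^{-5/2}e^{-z^2/4t}$. Indeed
$\partial_s h_z(s)=\frac{1}{\sqrt{2\pi}}\,s^{-2}\,\frac{z}{\sqrt s}\left(\frac{z^2}{2s}-\frac32\right)e^{-z^2/2s}$,
so after absorbing the powers of $z/\sqrt s$ you are left with $s^{-2}$. Dimensionally, $h_z\sim t^{-1}$.

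Your stated version is false for large $t$; at $z=\sqrt t$ the increment is of order $r\,t^{-2}$. Even where it holds, it would only yield $\lambda^{-1}t^{-2}$. With the correct bound, $\int_0^{t/2}\frac{C}{\lambda r^{3/2}}\,r\,t^{-2}\,\dd r\lesssim \lambda^{-1}t^{-3/2}$, which is the required scaling. Likewise, the factor you wrote as $\sqrt r\,|\partial_s h_z|$ should be $\sqrt t\,\sup_{[t/2,t]}|\partial_s h_z|$.

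For (iii), you can make ``$h_z$ is small'' precise with two bounds:
\begin{enumerate}[(a)]
\item $\int_0^{t/2}h_z(s)\,\dd s=\mathbb P(T_z\le t/2)\le e^{-z^2/t}$, so the main part is $\lesssim \lambda^{-1}t^{-3/2}e^{-z^2/t}$;
\item $h_z(t)\int_{t/2}^t q_\lambda\lesssim \lambda^{-1}t^{-1/2}h_z(t)\lesssim \lambda^{-1}t^{-3/2}e^{-z^2/4t}$.
\end{enumerate}
The bound in (a) is a bit cleaner than the paper's change of variables in $I^{(2)}_t$.
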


Second, we consider the situation where either $x=0$ and $y<0$, or $x<0$ and $y=0$. As $\eps \downarrow 0$, i.e.~$\lambda\uparrow\infty$, the corresponding transition densities are expected to vanish. In the next lemma, we establish a bound on these transition densities.

\begin{lemma}\label{lemma:explicit_computations_Phi2}
	There exists $C>0$ such that for all $x,y<0$, all $\lambda > 0$ and all $t>0$
	$$ g_t(x)\,\Phi_t(\lambda;x,0) \le C \left( e^{-\sqrt{\lambda} \frac{|x|}{2}} + e^{-\lambda \frac{t}{4}}\right)\frac{e^{-\frac{x^2}{4t}}}{\lambda t^{3/2}}\;,$$
	and
	$$ g_t(y)\,\Phi_t(\lambda;0,y) \le C \left( e^{-\sqrt{\lambda} \frac{|y|}{2}} + e^{-\lambda \frac{t}{4}}\right)\frac{e^{-\frac{y^2}{4t}}}{\lambda t^{3/2}}\;.$$
\end{lemma}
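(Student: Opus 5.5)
The plan is to decompose the Brownian path at its first hitting time of $0$ and reduce everything to the explicit formula \eqref{Eq:x=y=0} of \cref{lemma:explicit_computations_Phi}.

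\textbf{Reduction to the first bound.} The second bound follows from the first. Under time reversal $s\mapsto t-s$, the bridge $\mathbb W_{(0,0),(t,y)}$ becomes $\mathbb W_{(0,y),(t,0)}$, and the functional $\int_0^t\1_{\R_-}(w_s)\,\dd s$ is invariant. Hence $\Phi_t(\lambda;0,y)=\Phi_t(\lambda;y,0)$. Since also $g_t(y)=g_t(-y)$, it suffices to treat the case $x<0$ and target point $0$.

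\textbf{Integral representation.} By definition, $g_t(x)\Phi_t(\lambda;x,0)$ is the density at $y=0$ of the measure $\mathbb W_x\big[e^{-\lambda\int_0^t\1_{\R_-}(w_s)\dd s};\,w_t\in\dd y\big]$. Let $T$ be the first hitting time of $0$ by $w$ started at $x<0$. Before $T$ the path is negative, which contributes the factor $e^{-\lambda T}$. By the strong Markov property, the remaining piece is a path from $0$ to $0$ in time $t-T$, whose contribution is $g_{t-T}(0)\Phi_{t-T}(\lambda;0,0)$. Using the hitting-time density $\frac{|x|}{\sqrt{2\pi u^3}}e^{-x^2/(2u)}$ together with \eqref{Eq:x=y=0}, I would write
\begin{equation*}
g_t(x)\Phi_t(\lambda;x,0)=\int_0^t \frac{|x|}{\sqrt{2\pi u^3}}e^{-\frac{x^2}{2u}}\,e^{-\lambda u}\,\frac{1-e^{-\lambda(t-u)}}{\lambda (t-u)^{3/2}\sqrt{2\pi}}\,\dd u .
\end{equation*}
This identity is the main step requiring care; it is a standard last-exit/first-passage decomposition of densities. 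The rest consists of elementary estimates, obtained by splitting the integral at $u=t/2$.

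\textbf{Region $u<t/2$.} Here $(t-u)^{-3/2}\le C t^{-3/2}$, and we bound $1-e^{-\lambda(t-u)}\le1$. Write $e^{-x^2/(2u)}=e^{-x^2/(4u)}e^{-x^2/(4u)}$ and use $e^{-x^2/(4u)}\le e^{-x^2/(2t)}\le e^{-x^2/(4t)}$ for one of the two factors. What remains is
\begin{equation*}
\int_0^\infty \frac{|x|}{\sqrt{2\pi u^3}}e^{-\frac{x^2}{4u}}e^{-\lambda u}\,\dd u,
\end{equation*}
which equals $\sqrt2$ times the Laplace transform at $\lambda$ of the hitting time of level $|x|/\sqrt2$. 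It is therefore $\sqrt2\, e^{-\sqrt{2\lambda}|x|/\sqrt2}=\sqrt2\, e^{-\sqrt\lambda|x|}$. This region thus contributes at most $C\,e^{-\sqrt\lambda|x|}\,e^{-x^2/(4t)}/(\lambda t^{3/2})$.

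\textbf{Region $u>t/2$.} Here $e^{-\lambda u}\le e^{-\lambda t/2}$. We also have $|x|u^{-3/2}e^{-x^2/(2u)}\le C|x|t^{-3/2}e^{-x^2/(2t)}$, and since $|x|t^{-1/2}e^{-x^2/(4t)}\le C$, this is at most $C t^{-1}e^{-x^2/(4t)}$. For the remaining factor, use $1-e^{-\lambda s}\le\lambda s$, so that it is bounded by $C s^{-1/2}$ with $s=t-u$; its integral over $s\in(0,t/2)$ is $C\sqrt t$. This region thus contributes at most $C e^{-\lambda t/2}t^{-1/2}e^{-x^2/(4t)}$. Since $\lambda t\,e^{-\lambda t/4}$ is bounded, this is in turn at most $C e^{-\lambda t/4}e^{-x^2/(4t)}/(\lambda t^{3/2})$.

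Adding the two regions yields the claimed bound with $e^{-\sqrt\lambda|x|}\le e^{-\sqrt\lambda|x|/2}$.
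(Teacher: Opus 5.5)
Your proof is correct and follows essentially the same route as the paper: the same time-reversal reduction, the same integral representation $\frac{|x|}{2\pi}\int_0^t\frac{1-e^{-\lambda(t-s)}}{\lambda(t-s)^{3/2}}\frac{e^{-\lambda s}}{s^{3/2}}e^{-x^2/(2s)}\,\dd s$, and the same split at $t/2$ with matching bounds on each piece. The differences are minor: you derive the representation from the strong Markov property at the hitting time of $0$ together with \eqref{Eq:x=y=0}, where the paper cites the Borodin--Salminen formula directly; and on $[0,t/2]$ you use the exact Laplace transform of a hitting time (giving the slightly sharper factor $e^{-\sqrt\lambda|x|}$), where the paper uses the lower bound $\lambda s+x^2/(4s)\ge\sqrt\lambda|x|/2$.
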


With these ingredients at hand, we can proceed with the:

\begin{proof}[Proof of \cref{lemma:transition_densities_convergence}]
	By \cref{lemma:transition_densities} and \cref{lemma:explicit_computations_Phi}, for any $x,y\ge 0$ such that $x+y > 0$ we have
	\begin{equation*}
		p_t^{\eps}(x,y)=\frac{\eps+y}{\eps+x} \left(\sqrt{\frac{2}{\pi t}}e^{-\frac{x^2+y^2}{2t}}\sinh\left(\frac{xy}{t}\right) \!+\! \sqrt{2}\eps \frac{x+y}{t^{3/2}\sqrt{\pi}}e^{-\frac{(x+y)^2}{2t}} \!+ 2\eps^2 R(\frac1{2\eps^2},t,x+y)\right)\!.
	\end{equation*}
	\noindent We start by proving Item~\eqref{item:transition_densities1}. Take $x\ge\delta$ and $y>0$. Using \eqref{Eq:TransDensBessel}, we find
	\begin{align*}
		|p_t^{\eps}(x,y)-p_t^{3-\textrm{Bessel}}(x,y)|\le& \left|\frac{\eps+y}{\eps+x} - \frac{y}{x}\right| \sqrt{\frac{2}{\pi t}}\,e^{-\frac{x^2+y^2}{2t}}\sinh\left(\frac{xy}{t}\right)\\
		&+ \frac{\eps+y}{\eps+x} \left(\sqrt{2}\eps \frac{x+y}{t^{3/2}\sqrt{\pi}}\,e^{-\frac{(x+y)^2}{2t}}+ 2\eps^2 |R(\frac1{2\eps^2},t,x+y)|\right)\,.
	\end{align*}
	Given the bound~\eqref{Eq:R}, it is straightforward to check that the r.h.s.~goes to $0$ as $\eps\downarrow 0$ uniformly over all $x\ge \delta$ and $y> 0$.

	\noindent We now prove Item~\eqref{item:transition_densities2}. Take $y\ge \delta$. Using \eqref{Eq:TransDensBessel}, we find
	\begin{align*}
		|p_t^{\eps}(0,y)-p_t^{3-\textrm{Bessel}}(0,y)|\le& \eps \frac{y \sqrt{2}}{t^{3/2}\sqrt{\pi}}\,e^{-\frac{y^2}{2t}}+ 2\eps(\eps+y)  |R(\frac1{2\eps^2},t,y)|\,.
	\end{align*}
	The bound~\eqref{Eq:R} ensures that the r.h.s.~goes to $0$ as $\eps\downarrow 0$ uniformly over all $y\ge \delta$.
	
	\noindent We now prove Item~\eqref{item:transition_densities3}. By \cref{lemma:transition_densities} and \cref{lemma:explicit_computations_Phi}, for all $x>0$
	\begin{align*}
		\frac{p_{1-t}^{\eps}(x,0)}{p_1^{\eps}(0,0)} &=  \frac{x}{(\eps+x)(1-e^{-\frac1{2\eps^2}})} \frac1{(1-t)^{3/2}} e^{-\frac{x^2}{2(1-t)}} +\frac{\eps}{\eps+x} \frac{\sqrt{2\pi}}{1-e^{-\frac1{2\eps^2}}}  R(\frac1{2\eps^2},1-t,x)\,,
	\end{align*}
	and the r.h.s.~is bounded uniformly over $\eps \in (0,1]$ and $x\ge \delta$. Furthermore, using \eqref{Eq:p3ratio}, we find
	\begin{align*}
		\left|\frac{p_{1-t}^{\eps}(x,0)}{p_1^{\eps}(0,0)}-\frac{p_{1-t}^{3-\textrm{Bessel}}(x,0)}{p_1^{3-\textrm{Bessel}}(0,0)}\right|\le& \left|1 - \frac{x}{(\eps+x)(1-e^{-\frac1{2\eps^2}})}\right| \frac1{(1-t)^{3/2}} e^{-\frac{x^2}{2(1-t)}}\\
		&+\frac{\eps}{\eps+x} \frac{\sqrt{2\pi}}{1-e^{-\frac1{2\eps^2}}}  |R(\frac1{2\eps^2},1-t,x)|\,.
	\end{align*}
	The bound~\eqref{Eq:R} ensures that the r.h.s.~goes to $0$ as $\eps\downarrow 0$ uniformly over all $x\ge \delta$.
\end{proof}

\subsection{Moments bounds}

We start by controlling the moments of the increments of the unconditioned diffusion $X^\eps$.

\begin{lemma}\label{lemma:holder_tightness_unconditionned}
	For any $\eta\in(0,1/2)$ and any $p\ge 2$
	$$\sup_{\eps \in (0,1]} \mathbb P_{0}^{f_\eps}\left[\left(\sup_{0 \le s < t \le 1} \frac{|w_t - w_s|}{|t-s|^\eta}\right)^{p}\right] < \infty\;.$$
\end{lemma}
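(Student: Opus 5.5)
Under $\mathbb P_0^{f_\eps}$ the canonical process solves $w_t = B_t + \int_0^t f_\eps(w_s)\,\dd s$. The Brownian part satisfies the required H\"older moment bound by Kolmogorov's criterion (or Garsia--Rodemich--Rumsey), independently of $\eps$. So the plan is to bound the drift integral $D_t=\int_0^t f_\eps(w_s)\,\dd s$ uniformly in $\eps$. The obstacle is that $f_\eps$ is only bounded by $\eps^{-1}$. The uniform control must therefore use the structure of the drift: it is large only where $w$ is close to or below $0$, and there it should act like the $3$-Bessel drift $1/w$.

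I would avoid estimating $f_\eps$ pathwise and instead use a comparison argument. Since $f_\eps\ge 0$ and $D$ is nondecreasing, we have $|w_t-w_s|\le |B_t-B_s| + (D_t-D_s)$. It then suffices to show that $D_t - D_s\le C_\omega |t-s|^\eta$ with $C_\omega$ having moments of all orders uniformly in $\eps$.

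\textbf{Reduction to the Brownian path.} Since $w\ge B$ (the drift is nonnegative) and $f_\eps$ is nonincreasing, I would use a Skorokhod-type bound. Set $M_t:=\sup_{r\le t}(-B_r)^+$. I expect $w$ never to go far below $0$: once $w_t\le 0$ the drift is $\eps^{-1}$, which pushes $w$ back up quickly. More precisely, compare with the reflected process. Let $\tau$ be a time when $w$ hits $0$ from above. On any interval $[s,t]$, either $w>0$ on $[s,t]$, or $w$ visits $(-\infty,0]$. The key pathwise estimate is $D_t - D_s \le \sup_{s\le r\le t}(B_s-B_r)^+ + \int_s^t \frac{\dd r}{w_r}\1_{w_r>0}$-type terms. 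This comes from writing $w_t-w_s = B_t-B_s + D_t-D_s$ and noting that whenever $w$ is below its running level, the drift only compensates Brownian decreases, in the spirit of the Skorokhod lemma. The remaining part, where $w$ is positive and the drift is at most $1/w$, needs a moment bound on $\int_s^t w_r^{-1}\1_{w_r>0}\,\dd r$.

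\textbf{Main step: moments via It\^o on a Lyapunov function.} To handle this cleanly, apply It\^o--Tanaka to $G(w)=(w^+)^2$ or similar. I would rather apply It\^o to $w_t^2$ directly:
\[
w_t^2 - w_s^2 = 2\int_s^t w\,\dd B + 2\int_s^t w f_\eps(w)\,\dd r + (t-s).
\]
Since $w f_\eps(w)\le 1$ for $w>0$ and $w f_\eps(w)\le 0$ for $w\le 0$, this gives uniform moment bounds on $\sup_{t\le1}|w_t|$. Next apply It\^o to $F_\eps(w)$:
\[
F_\eps(w_t)-F_\eps(w_s) = \int_s^t f_\eps\,\dd B + \tfrac12\int_s^t (f_\eps^2 - \eps^{-2}\1_{w\le0})\,\dd r,
\]
using the identity \eqref{eq:RN_identity}. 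This controls $\int_s^t f_\eps(w_r)^2\1_{w_r>0}\,\dd r$ modulo $F_\eps$ oscillations. Because $F_\eps$ is a logarithm for $w>0$, this is delicate. The cleanest route may be to bound $\mathbb E[(D_t-D_s)^p]\le C|t-s|^{p/2}$ directly. This in turn would follow from $\mathbb E[(w_t-w_s)^{2p}]\le C|t-s|^p$. For that I would compare $w$ with a $3$-Bessel process started at $\max(w_s,0)$ on $[s,t]$ via monotone coupling, since $f_\eps(x)\le 1/x^+$. The piece where $w\le 0$ contributes at most a reflection term controlled by $\sup_{[s,t]}|B-B_s|$. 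Kolmogorov's criterion then concludes.

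The hardest step is making the monotone comparison rigorous. This means showing that $D_t-D_s \le \sup_{[s,t]}|B_r-B_s| + (\rho_t-\rho_s)$, where $\rho$ is a $3$-Bessel process driven by the same noise. Increments of the Bessel process have Gaussian-type moments uniformly in the starting point.
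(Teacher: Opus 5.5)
There is a genuine gap, and it lies exactly in the case where the paper's proof spends most of its effort, namely $w_s<0$.

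\textbf{The claimed inequality is false.} You claim $D_t-D_s\le \sup_{[s,t]}|B_r-B_s|+(\rho_t-\rho_s)$, with $\rho$ a $3$-Bessel process started from $w_s^+$ at time $s$. The Skorokhod intuition behind it, ``the drift only compensates Brownian decreases'', is valid for hard reflection, where the process never sits below $0$. Here the reflection is soft: $w$ can be at depth $w_s^-=a>0$ at time $s$. The drift needed to bring it back up is then of order $a$, whatever $B$ does on $[s,t]$. Concretely, take $a\gg\eps$, $\eps^2\ll t-s\ll a\eps$, and $B$ nearly flat on $[s,t]$. Then $w$ stays negative and $D_t-D_s=(t-s)/\eps$, while your right-hand side is only $O(\sqrt{t-s})$.

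What the monotone coupling actually gives is
\[
B_t-B_s\le w_t-w_s\le (\rho_t-\rho_s)+w_s^- ,
\]
because $\rho_s-w_s=w_s^-$. Combined with the crude bound $w_t-w_s\le B_t-B_s+(t-s)/\eps$, this yields
\[
|w_t-w_s|\le 2|B_t-B_s|+|\rho_t-\rho_s|+\min\bigl(w_s^-,(t-s)/\eps\bigr).
\]

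\textbf{The missing ingredient.} You need a quantitative bound, at scale $\eps$, on how far $w$ sits below zero: $\mathbb E[(w_s^-)^p]\le C\eps^p$ uniformly in $s\in[0,1]$. You only say you ``expect $w$ never to go far below $0$'', and that is not enough. With this bound,
\[
\mathbb E\bigl[\min(w_s^-,(t-s)/\eps)^p\bigr]\lesssim \min\bigl(\eps,(t-s)/\eps\bigr)^p\le (t-s)^{p/2}.
\]
This is precisely the paper's dichotomy $|t-s|\le\eps^2$ versus $|t-s|>\eps^2$.

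The paper obtains the bound from the explicit density: $p^\eps_s(0,x)\le e^{x/\eps}g_s(x)$ for $x<0$, by \eqref{eq:transition_densities} and $\Phi\le 1$. It then treats the remaining pieces with the hitting time $\tau$ of $0$ and the strong Markov property.

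You can also get the bound inside your pathwise framework. On the last negative stretch before $s$ the drift equals $1/\eps$, so
\[
w_s^-\le \sup_{u\le s}\bigl(B_u-B_s-(s-u)/\eps\bigr)^+ .
\]
By time reversal, this supremum is stochastically dominated by an $\mathrm{Exp}(2/\eps)$ variable. Once this term is added, your comparison argument closes, and it is then slightly shorter than the paper's since no stopping time is needed.
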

\begin{proof}
	We denote by $X^\eps$ the solution of
	$$ \dd X_t^\eps=f_\eps(X_t^\eps)\,\dd t+\dd B_t\,\quad X_0^\eps=0\,,$$
	where $B$ is a Brownian motion starting from $0$. It suffices to prove that for any $p\ge 2$, there exists $C_p>0$ such that for all $0 \le s \le t \le 1$ and all $\eps \in (0,1]$
	$$ \mathbb{E} [|X_t^\eps - X_s^\eps|^p] \le C_p |t-s|^{p/2}\;.$$
	From now on, we fix $0\le s \le t \le 1$ and $\eps \in (0,1]$ and we establish bounds that hold uniformly in these parameters.
	
	We argue differently according to the sign of $X_s^\eps$. Let $(Y_r,r\ge s)$ be the solution of
	$$ \dd Y_r = \frac1{Y_r}\, \dd r + \dd B_r\;,\quad r\ge s\;,\qquad Y_s := (X_s^\eps)^+\;.$$
	On the event $\{X_s^\eps \ge 0\}$, we have
	$$ B_t-B_s \le X_t^\eps - X_s^\eps \le Y_t - Y_s\;,$$
	and consequently
	$$ |X_t^\eps - X_s^\eps|^p \le |B_t-B_r|^p + |Y_t-Y_s|^p\;.$$
	Since the $3$d-Bessel process has the law of the Euclidean norm of a $3$d-Brownian motion, we deduce that there exists a constant $C>0$, independent of $s,t$ and $\eps$, such that
	$$ \mathbb{E}\left[|X_t^\eps - X_s^\eps|^p \1_{\{X_s^\eps \ge 0\}}\right] \le C \, |t-s|^{p/2}\;.$$
	We have thus established the desired bound in the case where $X_s^\eps \ge 0$.
	
	Next, we observe that, whatever the sign of $X_s^\eps$, we have
	$$ B_t - B_s \le X_t^\eps - X_s^\eps \le B_t - B_s + \frac{t-s}{\eps}\;,$$
	and therefore there exists a constant $C>0$, independent of $s,t$ and $\eps$, such that
	$$ \mathbb{E}[|X_t^\eps - X_s^\eps|^p] \le C \left(|t-s|^{p/2} + \left(\frac{|t-s|}{\eps}\right)^p\right)\;.$$
	If $|t-s| \le \eps^2$, this provides the desired bound.
	
	It remains to cover the case where $X_s^\eps < 0$ and $|t-s| > \eps^2$. We let $\tau:=\inf\{r\ge s: X_r^\eps = 0\}$. Then we write
	$$|X_t^\eps - X_s^\eps| = |X_t^\eps - X_{t\wedge \tau}^\eps| + |X_{t\wedge \tau}^\eps - X_s^\eps|\;.$$
	The strong Markov property and the arguments above show that
	$$\mathbb{E}[|X_t^\eps - X_{t\wedge \tau}^\eps|^p] \le C|t-s|^{p/2}\;.$$
	We thus concentrate on bounding
	$$ \mathbb{E}\left[|X_{t\wedge \tau}^\eps - X_s^\eps|^p \1_{\{X_s^\eps < 0\}}\right] = A_1 + A_2\;,$$
	where
	\[
		A_1:= \mathbb{E}\left[|X_{t\wedge \tau}^\eps - X_s^\eps|^p \1_{\{X_s^\eps < 0; \tau \le t\}}\right],\qquad
		A_2:= \mathbb{E}\left[|X_{t\wedge \tau}^\eps - X_s^\eps|^p \1_{\{X_s^\eps < 0; \tau > t\}}\right]\;.
	\]
	On the event $\{\tau \le t\}$, we have $|X_{t\wedge \tau}^\eps - X_s^\eps| = |X_s^\eps|$. Consequently
	$$ A_1 \le \mathbb{E}[|X_s^\eps|^p \1_{\{X_s^\eps < 0\}}] = \int_{-\infty}^0 |x|^p \, p_s^{\eps}(0,x)\,\dd x \,.$$
	Let us prove that
	\begin{equation}\label{eq:claimnow}
	\int_{-\infty}^0 |x|^p \, p_s^{\eps}(0,x)\,\dd x\le C \, \eps^p.
	\end{equation}
		By~\eqref{eq:transition_densities} and since $\Phi_t(\lambda,0,x) \le 1$, we obtain
	\begin{align*}
		\int_{-\infty}^0 |x|^p \, p_s^{\eps}(0,x)\,\dd x &= \int_{-\infty}^0 |x|^p \, e^{\frac{x}{\eps}} g_t(x) \Phi_t(\lambda;0,x)\,\dd x\\
		&\le \eps^p \int_{-\infty}^0 \left(\frac{|x|}{\eps}\right)^p \, e^{\frac{x}{\eps}} g_t(x) \,\dd x\\
		&\le \eps^p \left( \sup_{y > 0} y^p \, e^{-y}\right) \int_{-\infty}^0 g_t(x) \,\dd x\\
		&\le C \eps^p\;.
	\end{align*}
	
	We turn to $A_2$. First consider the event $\{-\frac{t-s}{2\eps} < X_s^\eps < 0\}$ and note that if $\tau > t$ then necessarily
	$$ B_t-B_s + \frac{t-s}{\eps} < - X_s^\eps\;,$$
	and therefore
	$$ B_t-B_s < -\frac{t-s}{2\eps}\;.$$
	Consequently, setting $\zeta:=\frac{\sqrt{t-s}}{2\eps}$
	\begin{align*}
		\mathbb{E}\left[|X_{t\wedge \tau}^\eps - X_s^\eps|^p \1_{\{-\frac{t-s}{2\eps} < X_s^\eps < 0,\,\tau > t\}}\right] &= \mathbb{E}\left[|X_{t}^\eps - X_s^\eps|^p \1_{\{-\frac{t-s}{2\eps} < X_s^\eps < 0,\,\tau > t\}}\right]\\
		&\le \mathbb{E}\left[\left|B_t-B_s + \frac{t-s}{\eps}\right|^p \1_{\{B_t-B_s < -\frac{t-s}{2\eps}\}}\right]\\
		& = |t-s|^{p/2} \, \mathbb{E}\left[\left|Z + 2\zeta\right|^p \1_{\{Z < -\zeta\}}\right],
	\end{align*}
	where $Z\sim\mathcal N(0,1)$.	Now it remains to estimate
	\[
	\begin{split}
	 \sup_{\zeta>\frac12}\mathbb{E}\left[\left|Z + 2\zeta\right|^p \1_{\{Z < -\zeta\}}\right] & \le C \sup_{\zeta>\frac12}\mathbb{E}\left[(|Z|^p + \zeta^p) \1_{\{Z < -\zeta\}}\right]
	\\ & \le C \sup_{\zeta>\frac12}\left(1+ \zeta^p \,  e^{-\frac{\zeta^2}2}\right)<+\infty.
	\end{split}
	\]
	Second, we consider the event $\{X_s^\eps \le -\frac{t-s}{2\eps}\}$ and write
	\begin{align*}
		\mathbb{E}\left[ \left|X_{t\wedge \tau}^\eps - X_s^\eps\right|^p \1_{\{X_s^\eps \le -\frac{t-s}{2\eps},\,\tau > t\}}\right] &= \mathbb{E}\left[ |X_{t}^\eps - X_s^\eps|^p \1_{\{X_s^\eps \le -\frac{t-s}{2\eps},\,\tau > t\}}\right]\\
		&=\mathbb{E}\left[ \left|B_t - B_s + \frac{t-s}{\eps}\right|^p \1_{\{X_s^\eps \le -\frac{t-s}{2\eps},\,\tau > t\}}\right]\\
		&\le \mathbb{E}\left[ \left|B_t - B_s + \frac{t-s}{\eps}\right|^p \1_{\{X_s^\eps \le -\frac{t-s}{2\eps}\}}\right]\\
		&\le C \, |t-s|^{p/2} + C \mathbb{E}\Big[\left(\frac{t-s}{\eps}\right)^p\1_{\{X_s^\eps \le -\frac{t-s}{2\eps}\}} \Big]\;,
	\end{align*}
	and we note that
	\begin{align*}
		\mathbb{E}\Big[\left(\frac{t-s}{\eps}\right)^p\1_{\{X_s^\eps \le -\frac{t-s}{2\eps}\}} \Big] \le C\mathbb{E}\Big[ |X_s^\eps|^p\1_{\{X_s^\eps \le -\frac{t-s}{2\eps}\}} \Big] \le C \int_{-\infty}^0 |x|^p \, p_s^{\eps}(0,x) \, \dd x\;.
	\end{align*}
	By \eqref{eq:claimnow} and $|t-s|>\eps^2$ the last term is bounded by a term of order $|t-s|^{p/2}$.
\end{proof}
	
\begin{proof}[Proof of~\cref{lemma:tightness}]
	Take $t_0 := 2/3$. If we show that for any $\eta \in (0,1/2)$ and any $p\ge 1$
	$$ \sup_{\eps \in (0,1]}\mathbb P_{0,0}^{f_\eps}\left[\left(\sup_{0 \le s < t \le t_0}\frac{|w_t - w_s|}{|t-s|^\eta}\right)^p\right] < \infty\;,$$
	then, the invariance in law of the bridge under time-reversal and the fact $w_0=0$ under $\mathbb P_{0,0}^{f_\eps}$ yield the desired result.
	By the H\"older inequality with $q_1 \in (1,2)$ and $1/p_1 = 1 - 1/q_1$ we obtain
	\begin{align*}
		\mathbb P_{0,0}^{f_\eps}\left[\left(\sup_{0 \le s < t \le t_0}\frac{|w_t - w_s|}{|t-s|^\eta}\right)^p\right] \le A(\eps,t_0,q_1)\, \mathbb P_{0}^{f_\eps}\left[\left(\sup_{0 \le s < t \le t_0}\frac{|w_t - w_s|}{|t-s|^\eta}\right)^{pp_1}\right]^{\frac1{p_1}}\;,
	\end{align*}
	where
	\begin{align*} A(\eps,t_0,q_1) &:= \mathbb P_{0}^{f_\eps}\left[\left(\frac{\dd\mathbb P_{0,0}^{f_\eps}|_{\mathcal F_{t_0}}}{\dd\mathbb P_{0}^{f_\eps}|_{\mathcal F_{t_0}}}\right)^{q_1}\right]^{\frac1{q_1}}= \mathbb P_{0}^{f_\eps}\left[\left(\frac{p^\eps_{1-t_0}(w_{t_0},0)}{p^\eps_1(0,0)}\right)^{q_1}\right]^{\frac1{q_1}}.
	\end{align*}
	\cref{lemma:holder_tightness_unconditionned} ensures that for any $\eta\in(0,1/2)$ and any $p\ge 1$, the $(pp_1)$-th moment under $\mathbb P_{0}^{f_\eps}$ is bounded uniformly over all $\eps \in (0,1]$. To end the proof, it suffices to show that $\sup_{\eps\in(0,1]}A(\eps,t_0,q_1)<\infty$. We can write explicitly
	\begin{align*}
		A(\eps,t_0,q_1)^{q_1}&=p_1^\eps(0,0)^{-q_1}\int_{\R}p_{t_0}^\eps(0,x)\,p_{1-t_0}^\eps(x,0)^{q_1}\,\dd x\,.
	\end{align*}
	By~\eqref{eq:transition_densities} the integral appearing in the previous equation is
	\begin{equation}\label{eq:proof_tightness}
		\int_{\mathbb R} e^{(1-q_1)F_\eps(x)} g_{t_0}(x)\,\Phi_{t_0}(\eps^{-2}/2;0,x)\left(g_{1-t_0}(x)\,\Phi_{1-t_0}(\eps^{-2}/2;x,0)\right)^{q_1}\,\dd x
	\end{equation}
	and it ought to be bounded by $\eps^{2q_1}$ to cancel out the contribution of $p_1^\eps(0,0)^{-q_1}\lesssim\eps^{-2q_1}$ (see~\eqref{Eq:x=y=0}).

	We start by bounding the integral restricted to $(0,\infty)$. For $x>0$ we have
	$$ e^{(1-q_1)F_\eps(x)} = \left(\frac{\eps}{x+\eps}\right)^{q_1-1} \le\eps^{q_1-1}x^{1-q_1}\;.$$
	We recall formulae~\eqref{eq:explicit_computations_Phi} and~\eqref{Eq:R}, which imply that
	both $g_{t_0}(\cdot)\,\Phi_{t_0}(\eps^{-2}/2;0,\cdot)$ and $g_{1-t_0}(\cdot)\,\Phi_{1-t_0}(\eps^{-2}/2;\cdot,0)$ are bounded by $\eps$ times an integrable and bounded function with exponential decay at infinity.
	This means that the integral of~\eqref{eq:proof_tightness} restricted to $(0,\infty)$ is bounded by $C\eps^{q_1-1}\eps\eps^{q_1}=C\eps^{2q_1}$ as long as $x^{1-q_1}$ is integrable at $0$ i.e.~if $q_1<2$, where $C$ is a constant independent of $\eps$ which can be written as an explicit integral.
	
	We turn to bounding the integral restricted to $(-\infty,0)$. For $x\le 0$ we have
	$$ e^{(1-q_1)F_\eps(x)} = e^{(1-q_1) \frac{x}{\eps}}\;.$$
	We claim that, provided $q_1 \in (1,2)$ is chosen close enough to $1$, there exists a constant $C>0$ such that for all $x<0$ and all $\eps\in (0,1]$
	$$ e^{(1-q_1) \frac{x}{\eps}} g_{t_0}(x)\,\Phi_{t_0}(\eps^{-2}/2;0,x) \le C \epsilon^2 e^{-\frac{x^2}{8t_0}}\;,$$
	as well as
	$$ g_{1-t_0}(x)\,\Phi_{1-t_0}(\eps^{-2}/2;x,0) \le C \epsilon^2 e^{-\frac{x^2}{4(1-t_0)}}\;.$$
	Putting everything together, we deduce that the integral of~\eqref{eq:proof_tightness} on $(-\infty,0)$ is bounded by a term of order $\eps^{2(1+q_1)}$, which vanishes faster than $p_1^{\eps}(0,0)^{q_1}$ as $\eps\downarrow0$.
	
	It remains to prove the claim. The second bound is a direct consequence of~\cref{lemma:explicit_computations_Phi2}. Regarding the first bound, we use~\cref{lemma:explicit_computations_Phi2} to get
	\begin{align*}
		e^{(1-q_1) \frac{x}{\eps}} \, g_{t_0}(x)\,\Phi_{t_0}(\eps^{-2}/2;0,x)\le C \eps^2 e^{(q_1-1) \frac{|x|}{\eps}} \left( e^{-\frac{|x|}{2\sqrt{2} \eps}} + e^{- \frac{t_0}{8\eps^2}}\right) \frac{e^{-\frac{x^2}{4t_0}}}{t_0^{3/2}}\;.
	\end{align*}
	One can pick $q_1$ close enough to $1$ such that: for all $x< -1/\eps$
	$$ (q_1-1) \, \frac{|x|}{\eps} < \frac{x^2}{8t_0}\;,$$
	and for all $-1/\eps \le x <0$
	$$ (q_1-1) \, \frac{|x|}{\eps} < \min\left( \frac{|x|}{4\sqrt{2} \eps}, \frac{t_0}{16\eps^2}\right)\;.$$
	Consequently
	\begin{align*}
		e^{(1-q_1) \frac{x}{\eps}} \, g_{t_0}(x)\,\Phi_{t_0}(\eps^{-2}/2;0,x) &\le C\eps^2 \left( e^{-\frac{|x|}{4\sqrt{2} \eps}} + e^{- \frac{t_0}{16\eps^2}}\right)\frac{e^{-\frac{x^2}{8t_0}}}{t_0^{3/2}}\\
		&\le C \eps^2 e^{-\frac{x^2}{8t_0}}\;,
	\end{align*}
	thus concluding the proof of the claim. Overall, this entails that $\sup_{\eps \in (0,1]} A(\eps,t_0,q_1)<\infty$.
\end{proof}

\subsection{Estimates on the densities}\label{Subsec:EstimatesDensity}

In this subsection, we prove the estimates on the densities of the diffusion stated in~\cref{lemma:explicit_computations_Phi} and~\cref{lemma:explicit_computations_Phi2}.

\begin{proof}[Proof of~\cref{lemma:explicit_computations_Phi}]
	We rely on~\cite[Formula 1.1.5.7, pp. 167-168]{Borodin2002}\footnote{Using the notations of \cite{Borodin2002}, we rely on the formula with $r<x,\,r<z$ and the formula with $x=r,\,r<z$ which is a special instance of the first.}. In the particular case where $x=y=0$, this formula is precisely~\eqref{Eq:x=y=0}. In the case where $x\ge 0$, $y\ge 0$ and $x+y>0$, after a disintegration along the marginal at time $t$ of the Brownian motion, this formula writes
	\begin{align}\label{eq:borodin}
		g_t(y-x)\,\Phi_t(\lambda;x,y)=&\sqrt{\frac{2}{\pi t}}\,e^{-\frac{x^2+y^2}{2t}}\sinh\left(\frac{xy}{t}\right)+\frac{x+y}{2\pi} I_t\,,
	\end{align}
	where
	$$ I_t := \int_0^t\frac{1-e^{-\lambda(t-s)}}{\lambda (t-s)^{3/2}}\frac{1}{s^{3/2}}\,e^{-\frac{(x+y)^2}{2s}}\, \dd s\,.$$
	We write $I_t$ as the sum of $I_t^{(1)}$, $I_t^{(2)}$ and $I_t^{(3)}$ where
	\begin{equation}\label{eq:It12}
		\begin{split}
			I_t^{(1)} &:= \frac{1}{t^{3/2}}\,e^{-\frac{(x+y)^2}{2t}}\int_0^t\frac{1-e^{-\lambda(t-s)}}{\lambda (t-s)^{3/2}}\, \dd s\,,\\
			I_t^{(2)} &:= \int_0^{t/2}\frac{1-e^{-\lambda(t-s)}}{\lambda (t-s)^{3/2}}\left(\frac{1}{s^{3/2}}\,e^{-\frac{(x+y)^2}{2s}}-\frac{1}{t^{3/2}}\,e^{-\frac{(x+y)^2}{2t}}\right)\dd s\,,\\
			I_t^{(3)} &:= \int_{t/2}^t\frac{1-e^{-\lambda(t-s)}}{\lambda (t-s)^{3/2}}\left(\frac{1}{s^{3/2}}\,e^{-\frac{(x+y)^2}{2s}}-\frac{1}{t^{3/2}}\,e^{-\frac{(x+y)^2}{2t}}\right)\dd s\,.
		\end{split}
	\end{equation}
	We first concentrate on $I_t^{(1)}$, and write
	\begin{equation}\label{eq:transition_densities_bound1}
		\int_0^t \frac{1-e^{-\lambda(t-s)}}{\lambda (t-s)^{3/2}}\, \dd s=\int_0^t\frac{1-e^{-\lambda s}}{\lambda s^{3/2}}\, \dd s=\int_0^\infty\frac{1-e^{-\lambda s}}{\lambda s^{3/2}}\, \dd s-\int_t^{\infty}\frac{1-e^{-\lambda s}}{\lambda s^{3/2}}\, \dd s\,.
	\end{equation}
	This is the sum of two terms, the first term satisfies
	\begin{align*}
		\int_0^\infty\frac{1-e^{-\lambda s}}{\lambda s^{3/2}}\, \dd s &= \frac{1}{\lambda^{1/2}}\int_0^\infty\frac{1-e^{-u}}{u^{3/2}}\, \dd u= \frac{2}{\lambda^{1/2}}\int_0^{\infty}\frac{e^{-u}}{u^{1/2}}\, \dd u=\frac{2\sqrt{\pi}}{\lambda^{1/2}}\,.
	\end{align*}
	The second term is given by
	\begin{align*}
		\int_t^{\infty}\frac{1-e^{-\lambda s}}{\lambda s^{3/2}}\, \dd s &\le \frac{1}{\lambda}\int_{t}^\infty\frac{1}{s^{3/2}}\, \dd s = \frac{2}{\lambda \sqrt{t}}\,.
	\end{align*}
	Consequently
	\begin{equation}
		\frac{x+y}{2\pi} I_t^{(1)} = \frac1{\sqrt{\lambda}} \frac{x+y}{t^{3/2}\sqrt{\pi}}\,e^{-\frac{(x+y)^2}{2t}} + \frac1{\lambda} R^{(1)}(\lambda,t,x+y)\;,
	\end{equation}
	with
	$$ R^{(1)}(\lambda,t,x+y) := \frac{x+y}{2 \pi t^{3/2}}\,e^{-\frac{(x+y)^2}{2t}} \int_t^{\infty}\frac{1-e^{-\lambda s}}{s^{3/2}}\;,$$
	and
	$$ |R^{(1)}(\lambda,t,x+y)| \le \frac{x+y}{\pi t^2}\,e^{-\frac{(x+y)^2}{2t}} \le C \frac1{t^{3/2}} e^{-\frac{(x+y)^2}{4t}}\,.$$
	We turn to bounding $I_t^{(3)}$. To that end, we set $\phi(u):=u^{-3/2}\,e^{-\frac{1}{2u}}$ and observe that
	\begin{align*}
		I_t^{(3)} = \frac1{(x+y)^3} \int_{t/2}^t\frac{1-e^{-\lambda(t-s)}}{\lambda (t-s)^{3/2}}\int_{\frac{s}{(x+y)^2}}^{\frac{t}{(x+y)^2}}-\phi'(u)\,\dd u\,\dd s\,.
	\end{align*}
	The derivative is explicit and satisfies for some constant $C>0$
	$$ |-\phi'(u)| \le C e^{-\frac{1}{2u}}\left(u^{-7/2} \vee u^{-5/2}\right)\,,$$
	uniformly over all $u>0$, whence, because $s\ge t/2$, bounding the derivative,
	\[
	\left|\int_{\frac{s}{(x+y)^2}}^{\frac{t}{(x+y)^2}}-\phi'(u)\,\dd u\right| \le C \frac{(t-s)}{(x+y)^2} e^{-\frac{(x+y)^2}{2t}} \left( \left(\frac{(x+y)^2}{t}\right)^{7/2} \vee \left(\frac{(x+y)^2}{t}\right)^{5/2}\right)\,.
	\]
	Thus
	\begin{align*}
		\frac{x+y}{2\pi}|I_t^{(3)}| &\le \frac{C}{\lambda} \frac1{(x+y)^4} \left( \left(\frac{(x+y)^2}{t}\right)^{7/2} \vee \left(\frac{(x+y)^2}{t}\right)^{5/2}\right) e^{-\frac{(x+y)^2}{2t}} \int_{t/2}^t\frac{\dd s}{(t-s)^{1/2}}\\
		&\le \frac{C}{\lambda} \frac{\sqrt{t}}{(x+y)^4} \left( \left(\frac{(x+y)^2}{t}\right)^{7/2} \vee \left(\frac{(x+y)^2}{t}\right)^{5/2}\right) e^{-\frac{(x+y)^2}{2t}}\\
		&\le \frac{C}{\lambda t^{3/2}} \left( \left(\frac{(x+y)^2}{t}\right)^{3/2} \vee \left(\frac{(x+y)^2}{t}\right)^{1/2}\right) e^{-\frac{(x+y)^2}{2t}}\;.
	\end{align*}
	Playing around with the constant in the exponential decay rate in order to get rid of the polynomial prefactors, we get the further bound
	$$ \frac{x+y}{2\pi}|I_t^{(3)}| \le  \frac{C}{\lambda t^{3/2}} e^{-\frac{(x+y)^2}{4t}}\;.$$
	We turn to $I_t^{(2)}$ and write
	\begin{equation}\label{Eq:It2}\begin{split}
			|I_t^{(2)}| &\le \int_0^{t/2}\frac{1-e^{-\lambda(t-s)}}{\lambda (t-s)^{3/2}} \frac{1}{s^{3/2}}\,e^{-\frac{(x+y)^2}{2s}}\, \dd s\\
			&+ \int_0^{t/2}\frac{1-e^{-\lambda(t-s)}}{\lambda (t-s)^{3/2}} \frac{1}{t^{3/2}}\,e^{-\frac{(x+y)^2}{2t}}\, \dd s\;.
		\end{split}	
	\end{equation}
	First observe that there exists $C>0$ such that for all $z>0$
	\begin{equation}\label{Eq:tricky}
		\int_{z}^{\infty}u^{-1/2}\,e^{-u}\,\dd u \le \1_{(z\le 1)} \int_{0}^{\infty}u^{-1/2}\,e^{-u}\,\dd u + \1_{(z>1)} e^{-z} \le C e^{-z}\,.
	\end{equation}
	The first term of~\eqref{Eq:It2} is 
	\begin{align*}
		\int_0^{t/2}\frac{1-e^{-\lambda(t-s)}}{\lambda(t-s)^{3/2}}\frac{1}{s^{3/2}}\,e^{-\frac{(x+y)^2}{2s}}\,\dd s&\le C\lambda^{-1}t^{-3/2}\int_0^{t/2}\frac{1}{s^{3/2}}\,e^{-\frac{(x+y)^2}{2s}}\,\dd s\\
		&\le C\lambda^{-1}t^{-3/2}(x+y)^{-1}\int_{e^{-\frac{(x+y)^2}{t}}}^{\infty}u^{-1/2}\,e^{-u}\,\dd u\\
		&\le C\lambda^{-1}t^{-3/2}(x+y)^{-1}\,e^{-\frac{(x+y)^2}{t}}
	\end{align*}
	where we used the change of variables $u=(x+y)^2/(2s)$ at the second line. The second term of~\eqref{Eq:It2} is
	\begin{align*}
		&\int_0^{t/2}\frac{1-e^{-\lambda(t-s)}}{\lambda(t-s)^{3/2}}\frac{1}{t^{3/2}}\,e^{-\frac{(x+y)^2}{2t}}\,\dd s\\
		&\le \lambda^{-1}t^{-3/2}\,e^{-\frac{(x+y)^2}{2t}}\int_0^{t/2}\frac{1}{(t-s)^{3/2}}\, \dd s \le C \lambda^{-1}t^{-2}\,e^{-\frac{(x+y)^2}{2t}}\,.
	\end{align*}
	Thus we have shown that
	\begin{align*}
		\frac{x+y}{2\pi} \left| I_t^{(2)} \right| &\le \frac{C}{\lambda} \left(\frac1{t^{3/2}} e^{-\frac{(x+y)^2}{t}} + \frac{x+y}{t^2}\,e^{-\frac{(x+y)^2}{2t}}\right)\\
		&\le  \frac{C}{\lambda t^{3/2}} e^{-\frac{(x+y)^2}{4t}}\;.
	\end{align*}
	Consequently
	$$\frac{x+y}{2\pi} \left(I_t^{(2)} + I_t^{(3)}\right) =: \lambda^{-1} R^{(2-3)}(\lambda,t,x+y)\,,$$
	and
	\begin{align*}
		|R^{(2-3)}(\lambda,t,x+y)| &\le \frac{C}{t^{3/2}} e^{-\frac{(x+y)^2}{4t}}\;.
	\end{align*}
	Putting everything together, we obtain
	\begin{align*}
		g_t(y-x)\,\Phi_t(\lambda;x,y)=& \frac{\sqrt{2}}{\sqrt{\pi t}}\,e^{-\frac{x^2+y^2}{2t}}\sinh\left(\frac{xy}{t}\right)+\frac1{\sqrt{\lambda}} \frac{x+y}{t^{3/2}\sqrt{\pi}}\,e^{-\frac{(x+y)^2}{2t}}+ \frac1{\lambda}R(\lambda,t,x+y)\,
	\end{align*}
	where $R(\lambda,t,x+y) = R^{(1)}(\lambda,t,x+y) + R^{(2-3)}(\lambda,t,x+y)$.
\end{proof}

\begin{proof}[Proof of~\cref{lemma:explicit_computations_Phi2}]
	Since $\Phi_t(\lambda; x,y) = \Phi_t(\lambda;y,x)$, we have
	$$ g_t(x)\,\Phi_t(\lambda;x,0) = g_t(x)\,\Phi_t(\lambda;0,x)\;,$$
	and it suffices to deal with, say, the second expression. We rely on~\cite[Formula 1.1.5.7, pp. 161-162]{Borodin2002}\footnote{Using the notations of \cite{Borodin2002}, we rely on the formula with $x=r,\,z<r$.}. It implies that for $x<0$, for $t>0$, for $\lambda>0$,
	\[
	g_t(x)\,\Phi_t(\lambda;0,x)=\frac{-x}{2\pi}\int_0^t\frac{1-e^{-\lambda(t-s)}}{\lambda(t-s)^{3/2}}\frac{e^{-\lambda s}}{s^{3/2}}\,e^{-\frac{x^2}{2s}}\, \dd s\,.
	\]
	This formula differs from the case $x>0$ by an exponential factor. The latter allows for easy bounds. 
	
	We split the integral into the sum of the integrals over $[0,t/2]$ and $[t/2,t]$. On the first interval, we bound it as follows
	\begin{align*}
		\int_{0}^{t/2}\frac{1-e^{-\lambda(t-s)}}{\lambda(t-s)^{3/2}}\frac{e^{-\lambda s}}{s^{3/2}}\,e^{-\frac{x^2}{2s}}\, \dd s &\le C \frac1{\lambda t^{3/2}} \int_0^{t/2}s^{-3/2}\,e^{-\frac{x^2}{4s}} e^{-\lambda s - \frac{x^2}{4s}}\,\dd s\;.
	\end{align*}
	At this point, we let $s_0>0$ be such that $\lambda s_0 =  \frac{x^2}{4s_0}$ and we note that for all $s>0$
	$$ \lambda s + \frac{x^2}{4s} \ge \lambda s_0 = \sqrt{\lambda} \frac{|x|}{2}\;.$$
	Consequently we get
	\begin{align*}
		\int_{0}^{t/2}\frac{1-e^{-\lambda(t-s)}}{\lambda(t-s)^{3/2}}\frac{e^{-\lambda s}}{s^{3/2}}\,e^{-\frac{x^2}{2s}}\, \dd s &\le C \frac1{\lambda t^{3/2}} e^{-\sqrt{\lambda} \frac{|x|}{2}} \int_0^{t/2}s^{-3/2}\,e^{-\frac{x^2}{4s}}\,\dd s\\
		&\le C \frac1{\lambda t^{3/2}} e^{-\sqrt{\lambda} \frac{|x|}{2}} |x|^{-1}\int_{\frac{x^2}{2t}}^{\infty}u^{-1/2}\,e^{-u}\,\dd u\\
		&\le C \frac1{\lambda t^{3/2}} e^{-\sqrt{\lambda} \frac{|x|}{2}} |x|^{-1}\,e^{-\frac{x^2}{2t}}\,,
	\end{align*}
	where we used~\eqref{Eq:tricky} at the last line. 
	On the second interval, we bound the integral as follows
	\[
	\int_{t/2}^{t}\frac{1-e^{-\lambda(t-s)}}{\lambda(t-s)^{3/2}}\frac{e^{-\lambda s}}{s^{3/2}}\,e^{-\frac{x^2}{2s}}\, \dd s \le C \frac{e^{-\lambda \frac{t}{2}}}{t^{3/2}}  e^{-\frac{x^2}{2t}} \int_{t/2}^t\frac{1}{(t-s)^{1/2}}\, \dd s \le C \frac{e^{-\lambda \frac{t}{2}}}{t} e^{-\frac{x^2}{2t}}\;.
	\]
	Tuning the exponential factor in $\lambda t$, we further get
	\[
	\int_{t/2}^{t}\frac{1-e^{-\lambda(t-s)}}{\lambda(t-s)^{3/2}}\frac{e^{-\lambda s}}{s^{3/2}}\,e^{-\frac{x^2}{2s}}\, \dd s \le C \frac{e^{-\lambda \frac{t}{4}}}{\lambda t^2} e^{-\frac{x^2}{2t}}\;.
	\]
	Putting these two bounds together, and tuning the constant in the exponential factor in order to get rid of a factor $|x| / t^{1/2}$, we obtain
	\begin{align*}
		g_t(x)\,\Phi_t(\lambda;0,x) &\le C\left( \frac1{\lambda t}\,e^{-\sqrt{\lambda} \frac{|x|}{2}} + \frac{|x|}{\lambda t^{3/2}}\,e^{-\lambda \frac{t}{4}}\right)\frac{e^{-\frac{x^2}{2t}}}{t^{1/2}} \\
		&\le C \left(e^{-\sqrt{\lambda} \frac{|x|}{2}} + e^{-\lambda \frac{t}{4}}\right)\frac{e^{-\frac{x^2}{4t}}}{\lambda t^{3/2}}\;.
	\end{align*}
	The proof is complete.
\end{proof}

\section{Formulations of the SPDEs and an approximation result}\label{section:SPDEs}

We recall that a random process $W:\,\Omega\times L^2([0,1])\times[0,\infty)\to \R$ is called white noise if for every $\phi\in L^2([0,1])$, $(W_t(\phi))_{t\ge0}$ has continuous sample paths and $W$ is a centred Gaussian field with covariance 
\begin{equation*}
	\mathbb E[W_s(\phi)W_t(\phi')]=s\wedge t\,(\phi,\phi')_{L^2}\,.
\end{equation*}

We denote by $(p_t(x,y))_{t\ge0,x,y\in[0,1]}$ the heat kernel with Dirichlet boundary conditions on $[0,1]$ and by $(P_t)_{t\ge0}$ the corresponding semigroup. Similarly, we denote by $(g_t(x,y))_{t\ge0,x,y\in\R}$ the heat kernel on $\R$ and by $(G_t)_{t\ge0}$ the corresponding semigroup.

We set
\begin{equation}\label{eq:stochastic_convolution}
	V_t(x):=\int_0^t\int_0^1p_{t-r}(x,y)\,W(\dd r,\dd y),\quad t\ge0,\,x\in[0,1]\,.
\end{equation}
The integral is in the sense of Wiener ($\int_0^t\int_0^1p_{t-r}(x,y)^2\,\dd y\,\dd r=\int_0^tp_{2r}(x,x)\,\dd r<\infty$). The process $V$ is the solution of the stochastic heat equation \eqref{eq:SPDE_skew} with $\kappa=0$.

We are interested in the skew stochastic heat equation
\begin{equation}\label{eq:SHE}
	\left\{
	\begin{aligned}
		&\partial_t u_t(x)=\frac12 \partial^2_{xx}u_t(x) +b(u_t(x))+\dot{W}_{t}(x),\quad x\in[0,1],\\
		&u_t(0)=u_t(1)=0\,,\quad t\ge0\,,
	\end{aligned}
	\right.
\end{equation}
i.e. in the case where $b=\kappa\delta_0$ with $\kappa>0$.

\subsection{Formulation of the skew equation}

The following is a classical existence and uniqueness result for~\eqref{eq:SHE} in the case where $b$ is a Lipschitz continuous function:
\begin{theorem}[{\cite[Prop. C.2.1]{Dalang2026}}]\label{theorem:existence_uniqueness_smooth}
	Let $u_0\in L^2(0,1)$ and $T>0$. If $b:\R\to\R$ is Lipschitz continuous, there exists a unique $u\in\mathcal C([0,T],L^2(0,1))$ such that
	\begin{enumerate}
		\item For all $\phi \in \mathcal C^{\infty}([0,1])$ with $\phi(0)=\phi(1)=0$, for all $t\in[0,T]$,
			\begin{equation}\label{eq:she_weak}
				(u_t,\phi)_{L^2}=(u_0,\phi)_{L^2}+\int_0^t\left[(u_s,\frac12\phi'')_{L^2}+(b(u_s),\phi)_{L^2}\right]\dd s+W_t(\phi)\,.
			\end{equation}
		\item For all $t\in[0,T]$, for all $x\in[0,1]$,
			\begin{equation}\label{eq:she_mild}
				u_t(x)=P_tu_0(x)+\int_0^t\int_0^1p_{t-s}(x,y)b(u_s)(y)\,\dd y\,\dd s+V_t(x)\,.
			\end{equation}
	\end{enumerate}
	In particular,~\eqref{eq:SHE} admits both a unique weak solution and a unique mild solution, and they coincide.
\end{theorem}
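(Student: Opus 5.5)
The statement to prove is the classical well-posedness result for the stochastic heat equation with Lipschitz drift. I would prove it by a Picard fixed point on the mild formulation \eqref{eq:she_mild}, and then show that mild and weak solutions coincide.

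\textbf{Step 1: existence and uniqueness of the mild solution.} Fix $T>0$ and work in the Banach space $\mathcal{C}([0,T],L^2(0,1))$ with norm $\sup_{t\le T}e^{-\gamma t}\|u_t\|_{L^2}$, pathwise for a fixed realisation of the noise. First I record that $V$ has a version in $\mathcal C([0,T],L^2(0,1))$; in fact it is continuous in $(t,x)$ by Kolmogorov's criterion, using the heat kernel bounds $\int_0^t\int_0^1 (p_{t-r}(x,y)-p_{t'-r}(x',y))^2\,\dd y\,\dd r\lesssim |t-t'|^{1/2}+|x-x'|$. Define
\[\Gamma(u)_t:=P_tu_0+\int_0^tP_{t-s}b(u_s)\,\dd s+V_t.\]
Since $P_t$ is a contraction on $L^2$ and $b$ is $L$-Lipschitz, we have
\[\|\Gamma(u)_t-\Gamma(v)_t\|_{L^2}\le L\int_0^t\|u_s-v_s\|_{L^2}\,\dd s.\]
So $\Gamma$ is a contraction for $\gamma>2L$. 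Continuity in time of $\Gamma(u)$ follows from strong continuity of $P_t$ on $L^2$ and the linear growth of $b$. This gives a unique mild solution in $\mathcal C([0,T],L^2)$. If one wants \eqref{eq:she_mild} for every $x$, the drift term is continuous in $x$ by smoothing of $p$, so $u$ has a continuous version.

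\textbf{Step 2: the mild solution is a weak solution.} Take $\phi$ smooth with $\phi(0)=\phi(1)=0$ and pair \eqref{eq:she_mild} with $\phi$. Because $P_t$ is self-adjoint, $(P_tu_0,\phi)-(u_0,\phi)=\int_0^t(P_su_0,\frac12\phi'')\,\dd s$. For the convolution terms, $\phi$ lies in the domain of the Dirichlet Laplacian, so $\partial_s P_{r}\phi=\frac12P_r\phi''$. Then a Fubini argument gives
\[\Big(\int_0^tP_{t-s}f_s\,\dd s,\phi\Big)=\int_0^t(f_s,\phi)\,\dd s+\int_0^t\Big(\int_0^rP_{r-s}f_s\,\dd s,\tfrac12\phi''\Big)\,\dd r.\]
The same identity holds for $V$ with the stochastic Fubini theorem, producing $W_t(\phi)$. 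Summing the three contributions yields \eqref{eq:she_weak}.

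\textbf{Step 3: any weak solution is mild.} This is the step I expect to be the main obstacle, since it requires test functions that depend on time. Given a weak solution $u$, I would apply \eqref{eq:she_weak} to eigenfunctions $e_k(x)=\sqrt2\sin(k\pi x)$. These satisfy $\frac12e_k''=-\lambda_ke_k$, so $a_k(t):=(u_t,e_k)$ solves the scalar linear equation
\[\dd a_k=\bigl(-\lambda_ka_k+(b(u_t),e_k)\bigr)\,\dd t+\dd W_t(e_k).\]
By variation of constants,
\[a_k(t)=e^{-\lambda_kt}(u_0,e_k)+\int_0^te^{-\lambda_k(t-s)}(b(u_s),e_k)\,\dd s+\int_0^te^{-\lambda_k(t-s)}\,\dd W_s(e_k),\]
which is exactly the $k$-th coefficient of the right-hand side of \eqref{eq:she_mild}. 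Summing over $k$ in $L^2$ shows that $u$ satisfies the mild equation. Uniqueness from Step 1 then identifies it with the mild solution, so weak and mild solutions exist, are unique, and coincide.

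The remaining care point is the null-set bookkeeping when summing over countably many $k$, together with checking that the stochastic integrals of the $e_k$ reassemble into $V$ in $L^2$. Both follow from orthonormality of the $e_k$ and the Itô isometry.
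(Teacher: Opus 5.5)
The paper gives no proof of this statement; it imports it as a black box from \cite{Dalang2026}, so there is no in-paper argument to compare against. Your proposal is a correct, self-contained proof along standard lines.

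\textbf{Steps 1 and 2.} The pathwise Picard iteration in $\mathcal C([0,T],L^2(0,1))$ with an exponentially weighted norm gives existence and uniqueness of the mild solution; any $\gamma>L$ already suffices. Step~2 is the usual semigroup plus (stochastic) Fubini computation. It is justified because $\phi\in\mathcal C^\infty([0,1])$ with $\phi(0)=\phi(1)=0$ lies in $H^2\cap H^1_0$, the domain of the Dirichlet Laplacian, so $\partial_r P_r\phi=\frac12 P_r\phi''$ with no boundary terms.

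\textbf{Step 3.} Here you depart from the most common textbook route for weak $\Rightarrow$ mild, which goes back to Walsh. That route extends the weak formulation to time-dependent test functions and plugs in $\psi(s,\cdot)=P_{t-s}\phi$. You instead test against the Dirichlet eigenfunctions $e_k$, which are themselves admissible test functions, and solve the resulting scalar linear equations by variation of constants. This is legitimate here, and it is entirely pathwise: the noise is additive, so $e^{\lambda_k t}a_k(t)$ is handled by integration by parts without It\^o calculus. It also uses only countably many test functions, which makes the null-set bookkeeping trivial. What your route buys is simplicity. What it gives up is robustness: the time-dependent test function argument needs no explicit spectral basis, and it adapts to unbounded domains, variable coefficients, or measure-valued drifts such as the reflection term $\eta$ in~\eqref{eq:SPDE_NP}.

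\textbf{Two cosmetic points.}
\begin{enumerate}
\item When $t<t'$, your Kolmogorov increment bound for $V$ should also include the term $\int_t^{t'}\int_0^1 p_{t'-r}(x',y)^2\,\dd y\,\dd r\lesssim |t'-t|^{1/2}$.
\item The pointwise-in-$x$ form of \eqref{eq:she_mild} should be read as follows. For $t>0$ it defines a version of $u_t$: the drift convolution is continuous in $x$ because $\|P_r f\|_\infty\lesssim r^{-1/4}\|f\|_{L^2}$ is integrable at $r=0$. At $t=0$ the identity is trivial.
\end{enumerate}
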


The following two definitions and theorem are borrowed from the forthcoming work \cite{Butkovsky2026}. They adapt some of the results of \cite{Athreya2023,Athreya2025} to the case of Dirichlet boundary conditions.

\begin{definition}
	Let $b\in\mathcal C^{-1}(\R)$. We say that a sequence of functions $(b^n)_{n\ge0}$ converges to $b$ in $\mathcal C^{-1-}(\R)$ as $n\rightarrow\infty$ if $\sup_{n\ge0}\|b^n\|_{\mathcal C^{-1}}<\infty$ and
	\begin{equation*}
		\underset{n\rightarrow\infty}{\lim}\|b^n-b\|_{\mathcal C^{-1-\eps}}=0\,,\quad\textrm{for any}\,\eps>0\,.
	\end{equation*}
	Notice that the sequence $b^n:=\kappa g_{1/n}$ converges to $\kappa \delta_0$ in $\mathcal C^{-1-}$.
\end{definition}

\begin{definition}[Solution of the skew stochastic heat equation]\label{Def:sshe}

Let $u_0\in\mathcal C([0,1])$. A measurable adapted process $u:\,\Omega\times[0,\infty)\times[0,1]\rightarrow\R$ is called a \emph{mild solution} of \eqref{eq:SPDE_skew}
with initial condition $u_0$ if there exists a process $K:\,\Omega\times[0,\infty)\times[0,1]\rightarrow\R$ such that
\begin{enumerate}[(1)]
	\item For every $x\in[0,1]$ and every $t\ge0$,
	\begin{equation}\label{eq:sshe}
		u_t(x)=P_tu_0(x)+K_t(x)+\int_0^t\int_0^1p_{t-r}(x,y)W(\dd r,\dd y)\,,
	\end{equation}
   \item for any sequence of functions $(b^n)_{n\ge 0}$ in $\mathcal C_b^\infty$ such that $b^n\to b$ in $\mathcal C^{-1-}$, we have for any $T>0$,
		\begin{equation}\label{Kcurlydef}
		\sup_{t\in[0,T]}\sup_{x\in[0,1]}\left|\int_0^t\int_0^1p_{t-r}(x,y) b^n(u_r(y))\,\dd y\,\dd r- K_t(x)\right|\to0\quad  \text{in probability as $n\to\infty$.}
		\end{equation}
	\item a.s. the function $u$ is continuous on $[0,\infty)\times[0,1]$.
  \end{enumerate}
\end{definition}

\begin{theorem}[\cite{Butkovsky2026}]\label{theorem:properties_spde_dirac}
	Let $b\in\mathcal C^{-1}$ and $u_0\in\mathcal C([0,1])$. Then:
	\begin{enumerate}
		\item\label{item:properties_spde_dirac1} The SPDE~\eqref{eq:SHE} has a unique solution in law (in the sense of~\cref{Def:sshe}).
		\item\label{item:properties_spde_dirac2} The unique solution is a Markov process with state space $\mathcal C([0,1])$.
		\item\label{item:properties_spde_dirac3} (stability) If $u^n$ is a solution to~\eqref{eq:SHE} (in the sense of~\cref{theorem:existence_uniqueness_smooth}) with drift $b^n\in\mathcal C_b^{\infty}$ and starting from $u_0$, and if $b^n$ converges to $b$ in $\mathcal C^{-1-}$,
	then we have the convergence in distribution
	\begin{equation*}
		u^n\rightarrow u\,,\quad\textrm{as }\,n\rightarrow\infty\,,
	\end{equation*}
	in $\mathcal C([0,\infty)\times[0,1])$ endowed with the topology of uniform convergence on compact sets.
	\end{enumerate}
	
\end{theorem}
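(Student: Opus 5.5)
The plan is to follow the strategy of \cite{Athreya2023,Athreya2025} and adapt it to Dirichlet boundary conditions, proving the three items in the order \emph{a priori bounds} $\Rightarrow$ \emph{existence and stability} $\Rightarrow$ \emph{uniqueness} $\Rightarrow$ \emph{Markov property}.

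\textbf{Step 1: uniform a priori bounds.} Fix $b^n\in\mathcal C_b^\infty$ with $b^n\to b$ in $\mathcal C^{-1-}$, and let $u^n$ be the solution given by \cref{theorem:existence_uniqueness_smooth}. Write $u^n=P_\cdot u_0+K^n+V$, where
\[
K^n_t(x)=\int_0^t\int_0^1 p_{t-r}(x,y)\,b^n(u^n_r(y))\,\dd y\,\dd r .
\]
I will bound moments of $\sup_x|K^n_t(x)-K^n_s(x)|$ by $C|t-s|^{1/2+\theta}$ for some $\theta>0$, with $C$ depending on $b^n$ only through $\sup_n\|b^n\|_{\mathcal C^{-1}}$.

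The tool is the Stochastic Sewing Lemma \cite{LeSSL}, applied to germs of the form $A_{s,t}=\mathbb E\big[\int_s^t\int p_{t-r}(x,y)\,b^n(V_r(y)+\psi_s(y))\,\dd y\,\dd r\,\big|\,\mathcal F_s\big]$. Here $\psi_s$ is frozen at time $s$. The point is that, conditionally on $\mathcal F_s$, the variable $V_r(y)$ is Gaussian with variance $\sigma^2_{r-s}(y)\asymp\min\big((r-s)^{1/2},\,y^2\wedge(1-y)^2/(r-s)^{1/2}\big)$. Hence $\|G_{\sigma^2}b^n\|_\infty\lesssim\sigma^{-1}\|b^n\|_{\mathcal C^{-1}}$. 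In the interior this gives an integrable singularity $(r-s)^{-1/4}$, and therefore a germ of size $(t-s)^{3/4}$.

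A Gronwall-type buckling argument, as in \cite{Athreya2025}, then closes the estimate on the regularity of $K^n$ itself.

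\textbf{Step 2: existence and stability.} The bounds of Step 1 yield tightness of $(u^n,K^n)$ in $\mathcal C([0,\infty)\times[0,1])$. Running the same sewing estimate with $b^n-b^m$ measured in $\mathcal C^{-1-\eps}$ shows that $\int\!\!\int p\,(b^n-b^m)(u^k)$ is small uniformly in $k$. From this, any limit point $(u,K)$, realised via Skorokhod, satisfies \eqref{eq:sshe} and \eqref{Kcurlydef}, and \eqref{Kcurlydef} does not depend on the chosen approximating sequence. This gives existence. Item~\eqref{item:properties_spde_dirac3} then follows from item~\eqref{item:properties_spde_dirac1}: every subsequential limit has the unique law.

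\textbf{Step 3: uniqueness in law.} I would first prove pathwise uniqueness among solutions enjoying the regularity of Step 1, then conclude by Yamada--Watanabe. Given two solutions $u,\tilde u$ driven by the same noise, I estimate $\sup_x|u_t-\tilde u_t|$ by stochastic sewing applied to $\int\!\!\int p\,[b^n(V+\psi)-b^n(V+\tilde\psi)]$. This uses the bound $\|G_{\sigma^2}b\|_{\mathrm{Lip}}\lesssim\sigma^{-2}\|b\|_{\mathcal C^{-1}}$, after which a stochastic Gronwall lemma is applied.

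At regularity exactly $\mathcal C^{-1}$ this is borderline. The singularity $(r-s)^{-1/2}$ is still integrable, but the sewing exponent condition becomes tight. If it fails, I would instead prove uniqueness in law directly: a Zvonkin/Girsanov-type transformation is not available in infinite dimension, so the fallback is to show that the finite-dimensional marginals are determined via the Kolmogorov equation for the smooth approximations, using the stability estimates.

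\textbf{Step 4: Markov property.} This follows from uniqueness in law together with the measurable (indeed continuous) dependence of the law on $u_0$, which comes from Step 2. Restarting the equation at time $s$ from $u_s$ again gives a solution in the sense of \cref{Def:sshe}, and uniqueness identifies its conditional law.

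\textbf{Main obstacle.} I expect the hardest step to be the boundary degeneracy in Steps 1 and 3. Near $y\in\{0,1\}$ the conditional variance of $V$ collapses, so the Gaussian regularisation of $b$ is lost. My first attempt would be to compensate using the factor $p_{t-r}(x,y)\lesssim y(1-y)$-type decay of the Dirichlet kernel near the boundary, splitting the $y$-integral into $\{y\wedge(1-y)\le (r-s)^{1/4}\}$, handled by the crude bound $\|b^n\|_{L^1}$-type mass estimates, and its complement, handled by sewing. Closing the uniqueness argument at the critical regularity $\mathcal C^{-1}$ is the second delicate point.
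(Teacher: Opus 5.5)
The paper gives no proof of this statement. It is imported from the forthcoming work \cite{Butkovsky2026}, which the authors describe as adapting the stochastic-sewing approach of \cite{Athreya2023,Athreya2025} to Dirichlet boundary conditions and as proving uniqueness \emph{in law}. Your overall plan is in that spirit, and your Steps 1--2 work at the level of exponents (the two sewing conditions hold with exponents $3/4$ and $5/4$). However, the uniqueness step, on which all three items rest, fails as proposed.

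At regularity exactly $\mathcal C^{-1}$ your own bounds are critical. The estimate $\|G_{\sigma^2}b\|_{\mathrm{Lip}}\lesssim\sigma^{-2}\asymp(r-s)^{-1/2}$ gives a germ of order $(t-s)^{1/2}$ for the difference of two solutions. The term of $\mathbb E_s\delta A_{s,u,t}$ controlled by $\|G_{\sigma^2}b\|_{\mathcal C^2}\lesssim\sigma^{-3}$, paired with the $(u-s)^{3/4}$ time regularity of $\psi$, is of order $(t-s)^{1}$. The Stochastic Sewing Lemma requires exponents $1/2+\eps$ and $1+\eps$, so it does not apply and the Gronwall step cannot be closed.

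This is the actual threshold, not a technicality. The known sewing-based pathwise-uniqueness results require $\beta>-1$ in the $\mathcal C^\beta$ scale. The Dirac drift is covered by them only because it can be measured in the $L^1$-based space $\mathcal B^0_{1,\infty}$, where its index is $0>-1$. So for $b=\kappa\delta_0$, the only drift this paper needs, pathwise uniqueness is within reach, but not through the $\mathcal C^{-1}$ bounds you use.

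Your fallback does not fill the hole. The Kolmogorov equations of the smooth approximations determine the laws of the $u^n$. But showing that an \emph{arbitrary} solution in the sense of \cref{Def:sshe} has the limiting law would require uniform-in-$n$ regularity for an infinite-dimensional backward Kolmogorov equation, paired with the distribution-valued drift $b(u)$. That is essentially the whole problem. What is missing is a mechanism that gives uniqueness in law without a pathwise contraction estimate at critical regularity. One example is a generalized-coupling argument, in which Girsanov is applied only to an auxiliary $L^2$ control drift.

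There are three smaller points.
\begin{itemize}
\item \emph{Boundary.} The conditional variance is $\sigma^2_{r-s}(y)=\int_0^{r-s}p_{2\tau}(y,y)\,\dd\tau\asymp\min\big((r-s)^{1/2},\,y\wedge(1-y)\big)$. This is nondecreasing in $r-s$, unlike your expression $y^2/(r-s)^{1/2}$. Near $y=0$ the extra singularities $y^{-1/2}$, $y^{-1}$, $y^{-3/2}$ appear only for $y\lesssim(r-s)^{1/2}$. Integrated against $p_{t-r}(x,y)\lesssim\min\big((t-r)^{-1/2},\,y/(t-r)\big)$, they do not worsen the exponents, so the boundary is not the main obstacle. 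In any case, the $\|b^n\|_{L^1}$-type mass bounds you propose there are not uniform for a general $b\in\mathcal C^{-1}$; mollifications of the principal value of $1/x$ are a counterexample.
\item \emph{Class of solutions.} Pathwise uniqueness ``among solutions enjoying the regularity of Step 1'' is weaker than uniqueness in the class of \cref{Def:sshe}, which imposes no regularity on $K$. Step 1 provides these bounds only for the smooth approximations, so you also need an a priori estimate valid for every solution.
\item \emph{Markov property.} Step 2 gives stability in $b^n$ at fixed $u_0$, not continuity of the law in $u_0$. Step 4 needs the latter, which requires a separate (standard) argument.
\end{itemize}
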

\subsection{Approximation result and proof of~\cref{theorem:invariant_measure}}

Our proof of~\cref{theorem:invariant_measure} relies on an approximation result under the stationary measures. This approximation result is an extension of the stability result (\cref{theorem:properties_spde_dirac},~\cref{item:properties_spde_dirac3}): instead of starting $u^{n}$ and $u$ from the \emph{same, deterministic} initial condition $u_0$, we start these two processes from their respective invariant measures, provided the sequence $(b^n)_n$ is conveniently chosen.

Let $\kappa>0$, and let
\begin{equation}\label{eq:approximate_b}
	b^n=\kappa g_{1/n}\,,\quad n\ge0\,,
\end{equation}
where $g_{t}$ is the density of a random variable with law $\mathcal N(0,t)$. We already noticed that $b^n$ converges to $\kappa\delta_0$ in $\mathcal C^{-1-}$.
For every $n\ge0$, let $B^n$ be the antiderivative of $b^n$ defined by 
\begin{equation}\label{eq:anditerivatives}
	B^n(x)=-\int_x^{\infty}b^n(y)\,\dd y\,,\quad x\in\R\,.
\end{equation}
As for every $n\ge0$, $b^n$ is ($\kappa$ times) a smooth probability density, $B^n$ is a nonpositive, nondecreasing function, bounded below by $-\kappa$. We define the probability measure on $\mathcal C([0,1])$
\begin{equation}\label{eq:approx_reversible_measure}
	\mathbb Q^{\kappa,n}_{0,0}(\dd w):=Z_{\kappa,n}^{-1}\exp\left(2\int_0^1B^n(w_s)\,\dd s\right)\mathbb W_{0,0}(\dd w)\,.
\end{equation}

\begin{remark}\label{remark:approximation}
	The sequence $b^n$ is not explicit in the notation $\mathbb Q^{\kappa,n}_{0,0}$, but will always be given by~\eqref{eq:approximate_b}.
	We could have taken any sequence $b^n$ which converges to $\kappa\delta_0$ in $\mathcal C^{-1-}$ as $n\rightarrow\infty$, such that, uniformly in $n\ge0$, the antiderivatives given by~\eqref{eq:anditerivatives} are uniformly bounded above and below.
	Indeed, this implies that the Radon-Nikodym derivatives
	\begin{equation*}
		\frac{\dd \mathbb Q_{0,0}^{\kappa,n}}{\dd \mathbb W_{0,0}}
	\end{equation*}
	are uniformly bounded in $n$. In our case, they are uniformly bounded by $\exp(2\kappa)$.
\end{remark}

\begin{remark}\label{remark:reversible_measure_spde}
	It is a classical result that $\mathbb Q^{\kappa,n}_{0,0}$ is the only reversible probability measure of the solution of~\eqref{eq:SHE} with $b=b^n$ (see for instance~\cite[Th. 5.3.11, p. 279]{Dalang2026}).
\end{remark}

\begin{proposition}\label{prop:stability}
	Let $(b^n)$ be the sequence of functions given by~\eqref{eq:approximate_b}. Let $u^{\kappa,n}$ be the solution to~\eqref{eq:SHE} (in the sense of~\cref{theorem:existence_uniqueness_smooth}) with drift $b^n$ and started from $u^{\kappa,n}_0\eqlaw\mathbb Q_{0,0}^{\kappa,n}$ (given by~\eqref{eq:approx_reversible_measure}) taken independent from $W$. Then one has the convergence in distribution in the space $\mathcal C([0,\infty)\times[0,1])$ endowed with the topology of uniform convergence on compact sets
	\begin{equation*}
		u^{\kappa,n}\rightarrow u^\kappa\,,\quad\textrm{as }\,n\rightarrow\infty\,,
	\end{equation*}
	where $u^\kappa$ is the unique solution to~\eqref{eq:SHE} (in the sense of~\cref{Def:sshe}) with drift $b=\kappa\delta_0$ started from $u_0^\kappa\eqlaw\mathbb Q_{0,0}^\kappa$.
\end{proposition}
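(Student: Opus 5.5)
We combine the deterministic-initial-condition stability result of \cref{theorem:properties_spde_dirac}, \cref{item:properties_spde_dirac3}, with the convergence of the initial laws, and glue them through a conditioning argument.

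\textbf{Step 1: convergence of the initial laws.} We first show that $\mathbb Q^{\kappa,n}_{0,0}\Rightarrow\mathbb Q^{\kappa}_{0,0}$, in fact in total variation. Since $B^n(x)=-\kappa\,\mathbb P(\mathcal N(0,1/n)> x)$, we have $B^n(x)\to-\kappa\1_{\R_-}(x)$ for every $x\neq0$. Under $\mathbb W_{0,0}$, the set $\{s:w_s=0\}$ has Lebesgue measure zero a.s. Hence $\int_0^1B^n(w_s)\,\dd s\to-\kappa\int_0^1\1_{\R_-}(w_s)\,\dd s$ a.s. The densities are bounded by $1$ (and by $e^{2\kappa}$ after normalisation, see \cref{remark:approximation}), so dominated convergence gives $Z_{\kappa,n}\to Z_\kappa>0$ and $L^1(\mathbb W_{0,0})$ convergence of the Radon--Nikodym derivatives. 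This is exactly total variation convergence.

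\textbf{Step 2: stability, uniform over initial conditions.} For $v\in\mathcal C([0,1])$ with $v(0)=v(1)=0$, let $\mu^n_v$ and $\mu_v$ denote the laws on $\mathcal C([0,\infty)\times[0,1])$ of the solutions with drift $b^n$ and drift $\kappa\delta_0$ started from $v$. We take the initial condition independent of $W$. By uniqueness in law (\cref{theorem:properties_spde_dirac}), the law of $u^{\kappa,n}$ is then $\int\mu^n_v\,\mathbb Q^{\kappa,n}_{0,0}(\dd v)$, and that of $u^\kappa$ is $\int\mu_v\,\mathbb Q^{\kappa}_{0,0}(\dd v)$; measurability of $v\mapsto\mu_v$ comes from the Markov property. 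Fix a bounded Lipschitz functional $G$ and split
\begin{equation*}
\Big|\int\mu^n_v(G)\,\mathbb Q^{\kappa,n}_{0,0}(\dd v)-\int\mu_v(G)\,\mathbb Q^{\kappa}_{0,0}(\dd v)\Big|\le \|G\|_\infty\|\mathbb Q^{\kappa,n}_{0,0}-\mathbb Q^{\kappa}_{0,0}\|_{TV}+\int|\mu^n_v(G)-\mu_v(G)|\,\mathbb Q^{\kappa}_{0,0}(\dd v).
\end{equation*}
The first term vanishes by Step 1. For the second, \cref{theorem:properties_spde_dirac}, \cref{item:properties_spde_dirac3} gives $\mu^n_v(G)\to\mu_v(G)$ for each fixed $v$, and $|\mu^n_v(G)-\mu_v(G)|\le2\|G\|_\infty$. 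Dominated convergence then makes the second term vanish. Since bounded Lipschitz functionals determine weak convergence, this proves $u^{\kappa,n}\Rightarrow u^\kappa$.

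\textbf{Main obstacle.} The delicate point is justifying the disintegration in Step 2. We need the solution started from a random $u_0$ independent of $W$ to be a mixture of the solutions started from deterministic points, with a measurable kernel. For Lipschitz $b^n$ this is immediate from pathwise uniqueness and measurable dependence on $u_0$. For the Dirac drift, \cref{Def:sshe} is stated for deterministic $u_0$. We would therefore either take the mixture as the definition of the solution started from $\mathbb Q^\kappa_{0,0}$, which is consistent with uniqueness in law, or check conditions (1)--(3) of \cref{Def:sshe} conditionally on $u_0$. Only measurability of $v\mapsto\mu_v(G)$ is needed, and the Markov property of \cref{theorem:properties_spde_dirac} provides it. If measurability turns out to be troublesome, the fallback is to take limits of $\mu^n_v(G)$, which is measurable in $v$, so that the limit $\mu_v(G)$ is measurable as a pointwise limit.
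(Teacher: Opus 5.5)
Your proof is correct and is essentially the paper's argument. It combines the pointwise-in-$u_0$ stability from \cref{theorem:properties_spde_dirac}, \cref{item:properties_spde_dirac3}, with the $\mathbb W_{0,0}$-a.e.\ convergence of $\dd\mathbb Q^{\kappa,n}_{0,0}/\dd\mathbb W_{0,0}$ under the uniform bound $e^{2\kappa}$, and closes with dominated convergence. The differences are cosmetic: the paper applies dominated convergence once, to the product of the density and $\mathbb E[F(u^{\kappa,n}(u_0))]$ under $\mathbb W_{0,0}$, rather than splitting off a total-variation term. The paper also leaves implicit the mixture/measurability point that you make explicit, and your fallback via pointwise limits of $v\mapsto\mu^n_v(G)$ handles that point.
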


\noindent As a direct corollary, we obtain the proof of~\cref{theorem:invariant_measure}.
\begin{proof}[Proof of~\cref{theorem:invariant_measure}]
	In this proof, we take $(b^n)$ the sequence given by~\eqref{eq:approximate_b}. We need to prove that for every bounded measurable $f_1, f_2:\mathcal C([0,\infty)\times[0,1])\to\R$ and for every $t>0$,
	\begin{equation}\label{eq:reversible_measure}
		\int\mathbb Q^{\kappa}_{0,0}(\dd u_0)\,f_1(u_0) \, \mathbb E[f_2(u_t^{\kappa}(u_0))]=\int\mathbb Q^{\kappa}_{0,0}(\dd u_0)\,\mathbb E[f_1(u_t^{\kappa}(u_0))]
		 \, f_2(u_0)\,.
	\end{equation}
	It follows by~\cref{remark:reversible_measure_spde} that for all $t>0$, for all $n\ge0$
	\begin{equation}\label{eq:QKN_reversible}
		\int\mathbb Q^{\kappa,n}_{0,0}(\dd u_0)\,f_1(u_0) \, \mathbb E[f_2(u_t^{\kappa,n}(u_0))]=\int\mathbb Q^{\kappa,n}_{0,0}(\dd u_0)\,\mathbb E[f_1(u_t^{\kappa,n}(u_0))] \, f_2(u_0)\,.
	\end{equation}
	By~\cref{prop:stability}, see in particular~\eqref{eq:stability1} with $F(u)=f_1(u_0)f_2(u_t)$, we can pass to the limit in~\eqref{eq:QKN_reversible} and obtain~\eqref{eq:reversible_measure}.
\end{proof}

\begin{proof}[Proof of~\cref{prop:stability}]
	The result is a consequence of~\cref{theorem:properties_spde_dirac},~\cref{item:properties_spde_dirac3} and the Dominated Convergence Theorem.
	Let $F$ be any bounded continuous function on $\mathcal C([0,\infty)\times[0,1])$. For any $u_0\in \mathcal C([0,\infty)\times[0,1])$, as $n\rightarrow\infty$, $\mathbb E[F(u^{\kappa,n}(u_0))]$ goes to $\mathbb E[F(u^{\kappa}(u_0))]$ by~\cref{theorem:properties_spde_dirac}, and the sequence is uniformly bounded in $n$ ($F$ is bounded). We claim that for $\mathbb W_{0,0}$-almost every $u_0$, as $n\rightarrow\infty$,
	\begin{equation*}
		\frac{\dd \mathbb Q_{0,0}^{\kappa,n}}{\dd \mathbb W_{0,0}}(u_0)\longrightarrow\frac{\dd \mathbb Q_{0,0}^{\kappa}}{\dd \mathbb W_{0,0}}(u_0)
	\end{equation*}
	and that the L.H.S.~is bounded uniformly over all $u_0\in \mathcal C([0,\infty)\times[0,1])$ and all $n\ge 1$. Then, we deduce from the Dominated Convergence Theorem for $\mathbb W_{0,0}$ that
	\begin{equation}\label{eq:stability1}
		\int\mathbb Q_{0,0}^{\kappa,n}(\dd u_0)\,\mathbb E[F(u^{\kappa,n}(u_0))]=\int\mathbb W_{0,0}(\dd u_0)\,\frac{\dd \mathbb Q_{0,0}^{\kappa,n}}{\dd \mathbb W_{0,0}}(u_0)\,\mathbb E[F(u^{\kappa,n}(u_0))]\,,
	\end{equation}
	converges as $n\to\infty$ to
	\begin{equation*}
		\int\mathbb W_{0,0}(\dd u_0)\,\frac{\dd \mathbb Q_{0,0}^{\kappa}}{\dd \mathbb W_{0,0}}(u_0)\,\mathbb E[F(u^{\kappa}(u_0))]=\int\mathbb Q_{0,0}^{\kappa,n}(\dd u_0)\,\mathbb E[F(u^{\kappa}(u_0))]\,.
	\end{equation*}
	This establishes the asserted result.
	
	It remains to prove the claim. Recall that
	\begin{equation*}
		\frac{\dd \mathbb Q_{0,0}^{\kappa,n}}{\dd \mathbb W_{0,0}}(w) =\frac{\exp\left(2\int_0^1B^n(w_s)\,\dd s\right)}{Z_{\kappa,n}}, \ \mbox{ with }\quad Z_{\kappa,n} = \mathbb W_{0,0}\left[\exp\left(2\int_0^1B^n(w_s)\,\dd s\right)\right],
	\end{equation*}
	and similarly,
	\begin{equation*}
		\frac{\dd \mathbb Q_{0,0}^{\kappa}}{\dd \mathbb W_{0,0}}(w) =\frac{\exp\left(-2\kappa \int_0^1 \1_{\R_-}(w_s)\dd s\right)}{Z_{\kappa}}, \ \mbox{ with }\ Z_{\kappa} = \mathbb W_{0,0}\left[\exp\left(-2\kappa \int_0^1 \1_{\R_-}(w_s)\dd s\right)\right]\!.
	\end{equation*}	
	As $n\rightarrow\infty$, $B^n$ goes to $-\kappa\1_{\R_-}$ except at $0$. Indeed, for $x\neq0$, by a change of variables,
	\begin{equation*}
		\int_x^\infty b^n(y)\,\dd y=\kappa\int_x^\infty g_{1/n}(y)\,\dd y=\kappa\int_{\sqrt{n} x}^{\infty}g_1(y)\,\dd y\,.
	\end{equation*}
	By the Dominated Convergence theorem, if $x<0$, the previous integral goes to $\kappa$, and if $x>0$, it goes to $0$.
	Consequently, as $B^n$ is uniformly bounded in $n$, and because for $\mathbb W_{0,0}$-almost every $w$, $\textrm{Leb}(\{t\in[0,1]:\,w_t=0\})=0$, by the Dominated Convergence Theorem, for $\mathbb W_{0,0}$-almost every $w$, as $n$ goes to $\infty$,
	\begin{equation*}
		\int_0^1B^n(w_s)\,\dd s\longrightarrow-2\kappa\int_0^1\1_{\R_-}(w_s)\,\dd s\,.
	\end{equation*}
	As, for every $n\ge0$, $B^n\le0$, then for every $w\in\mathcal C([0,1])$ and $n\ge0$,
	\begin{equation*}
		\exp\left(2\int_0^1B^n(w_s)\,\dd s\right)\le 1\,.
	\end{equation*}
	As a consequence, by the Dominated Convergence Theorem for $\mathbb W_{0,0}$,
	\begin{equation*}
		Z_{\kappa,n} = \mathbb W_{0,0}\left[\exp\left(2\int_0^1B^n(w_s)\,\dd s\right)\right]\longrightarrow\mathbb W_{0,0}\left[\exp\left(-2\kappa\int_0^1\1_{\R_-}(w_s)\,\dd s\right)\right]=Z_\kappa\,.
	\end{equation*}
	Finally, as for every $n\ge0$, $-\kappa\le B^n$, then
	\begin{equation*}
		\mathbb W_{0,0}\left[\exp\left(2\int_0^1B^n(w_s)\,\dd s\right)\right]\ge\exp(-2\kappa)\,.
	\end{equation*}
	This implies that for $\mathbb W_{0,0}$-almost every $u_0$, as $n\rightarrow\infty$,
	\begin{equation*}
		\frac{\dd \mathbb Q_{0,0}^{\kappa,n}}{\dd \mathbb W_{0,0}}(u_0)\longrightarrow\frac{\dd \mathbb Q_{0,0}^{\kappa}}{\dd \mathbb W_{0,0}}(u_0)
	\end{equation*}
	and uniformly in $n\ge0$ and $u_0$,
	\begin{equation*}
		\frac{\dd \mathbb Q_{0,0}^{\kappa,n}}{\dd \mathbb W_{0,0}}(u_0)\le\exp(2\kappa)\,,
	\end{equation*}
	and the claim is proved.
\end{proof}

\subsection{Formulation of the reflected equation}

\begin{definition}[Solution of the reflected stochastic heat equation]\label{Def:NP}
	A pair $(u,\eta)$ is said to be a solution of \eqref{eq:SPDE_NP} if:
	\begin{enumerate}[(i)]
		\item \label{item:NP_1} $u$ is a nonnegative, continuous process with $u(t,0)=u(t,1)=0$, for all $t\ge0$ a.s.
		\item \label{item:NP_2} $\eta(\dd t,\dd x)$ is a random measure on $[0,\infty)\times(0,1)$ such that $\eta([0,T]\times(\eps,1-\eps))<\infty$ for all $\eps>0$ and for all $T>0$ a.s.
		\item \label{item:NP_3} For all $t\ge0$ and all $\phi\in\mathcal C^\infty([0,1])$ such that $\phi(0)=\phi(1)=0$,
			\begin{align*}
				\mathrm{a.s.},\quad(u_t,\phi)_{L^2}=(u_0,\phi)_{L^2}+\int_0^t(u_s,\frac12\phi'')_{L^2}\,\dd s+\int_0^t\int_0^1\phi(x)\,\eta(\dd s,\dd x)+W_t(\phi)\,.
			\end{align*}
		\item \label{item:NP_4} $\int u\,\dd\eta=0$ a.s.
	\end{enumerate}
\end{definition}

\begin{remark}
	The mild formulation for~\eqref{eq:SPDE_NP} is mentioned in~\cite[Remark, p.19]{DonatiMartin1993}, however both the original papers~\cite{Nualart1992} and~\cite{DonatiMartin1993} only rely on the weak formulation.
\end{remark}

\section{Proof of~\cref{theorem:dynamical_convergence}}\label{Sec:DynCV}

This section is devoted to the proof of~\cref{theorem:dynamical_convergence}, namely the convergence in law of $u^\kappa$ to $u^+$ under their respective stationary measures. In the first subsection, we establish tightness and in the second, we identify the limit.

\subsection{Tightness estimates}\label{section:tightness}

We recall that $u^{\kappa}$ denotes the solution of~\eqref{eq:SPDE_skew} in the sense of~\cref{Def:sshe} started from $\mathbb Q_{0,0}^{\kappa}$.
We herein prove the
\begin{proposition}\label{prop:tightness}
	The family $(u^\kappa)_{\kappa>1}$ is tight in $\mathcal C([0,\infty)\times[0,1])$.
\end{proposition}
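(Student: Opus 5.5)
We reduce tightness in $\mathcal C([0,\infty)\times[0,1])$ to Kolmogorov-type moment bounds on space and time increments of $u^\kappa$ that are uniform in $\kappa>1$. Stationarity makes initial tightness automatic: each $u^\kappa_t$ has law $\mathbb Q^\kappa_{0,0}=\mathbb P^{f_\eps}_{0,0}$ with $\eps=(4\kappa)^{-1/2}$, by \cref{lemma:equality_measures}. Hence \cref{lemma:tightness} gives, for every $p\ge1$ and $\eta<1/2$,
\[
\sup_{\kappa>1}\sup_{t\ge0}\mathbb E\big[\|u^\kappa_t\|_{\mathcal C^\eta}^p\big]<\infty ,
\]
which controls spatial increments $|u^\kappa_t(x)-u^\kappa_t(y)|$ in $L^p$ by $C|x-y|^{\eta}$ uniformly in $t$ and $\kappa$. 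It remains to control time increments $u^\kappa_t(x)-u^\kappa_s(x)$ uniformly in $\kappa$. Once we have $\mathbb E|u^\kappa_t(x)-u^\kappa_s(x)|^p\le C|t-s|^{\gamma p}$ with $\gamma p>2$, Kolmogorov's criterion in the two variables $(t,x)$ on $[0,T]\times[0,1]$, combined with tightness of $u^\kappa_0$, yields the claim via Arzelà–Ascoli.

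For time increments we use the Lyons–Zheng decomposition, which requires reversibility, and \cref{theorem:invariant_measure} supplies it. To avoid working directly with the singular equation, we first argue at level $n$: $u^{\kappa,n}$ with drift $b^n=\kappa g_{1/n}$, started from $\mathbb Q^{\kappa,n}_{0,0}$, is reversible by \cref{remark:reversible_measure_spde}. Testing against $\phi$ (or a smooth mollification $\phi=p_\delta(x,\cdot)$ of a point evaluation), we write
\[
(u_t,\phi)-(u_s,\phi)=\tfrac12\big(M_t-M_s\big)-\tfrac12\big(\hat M_{T-s}-\hat M_{T-t}\big),
\]
where $M$ is the forward martingale part and $\hat M$ the martingale of the time-reversed process. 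The drift terms cancel, so no bound on $\kappa\delta_0$ is needed. Both martingales have quadratic variation $(t-s)\|\phi\|_{L^2}^2$, and by reversibility $\hat M$ has the same law as $M$. Burkholder–Davis–Gundy then gives
\[
\mathbb E|(u_t-u_s,\phi)|^p\le C_p|t-s|^{p/2}\|\phi\|_{L^2}^p ,
\]
uniformly in $n$ and $\kappa$. We pass to $n\to\infty$ using \cref{prop:stability} together with Fatou's lemma, or lower semicontinuity of moments under convergence in law.

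To convert these weak bounds into pointwise bounds, we split
\[
u_t(x)-u_s(x)=\big[u_t(x)-(u_t,\phi_\delta)\big]+(u_t-u_s,\phi_\delta)+\big[(u_s,\phi_\delta)-u_s(x)\big],
\]
where $\phi_\delta$ is a mollifier at scale $\delta$ around $x$, adjusted near the boundary. The two outer terms are bounded by $C\delta^\eta$ in $L^p$ through the uniform spatial Hölder bound. The middle term is at most $C|t-s|^{1/2}\delta^{-1/2}$, since $\|\phi_\delta\|_{L^2}\sim\delta^{-1/2}$. Choosing $\delta=|t-s|^{a}$ with $a=1/(1+2\eta)$ optimizes to $|t-s|^{\eta/(1+2\eta)}$, an exponent close to $1/4$. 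Taking $p$ large then satisfies Kolmogorov's condition.

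The main obstacle is the rigorous Lyons–Zheng step. We have to check that the forward and backward martingale decompositions hold for the approximating smooth-drift equations with $\kappa$-independent quadratic variation. We then need to transfer the resulting moment bounds through the limit $n\to\infty$, which is only convergence in law. One could instead try to apply the decomposition directly to $u^\kappa$ as a reversible Markov process with its Dirichlet form, but going through the approximation seems safer.
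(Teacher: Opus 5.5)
Your proof is correct. It rests on the same two inputs as the paper's. The first is the Lyons--Zheng bound $\mathbb E|(h,u^{\kappa,n}_t-u^{\kappa,n}_s)|^p\le C_p\|h\|_{L^2}^p|t-s|^{p/2}$ for the smooth-drift approximations, transferred to $u^\kappa$ via \cref{prop:stability} and Fatou. The second is the uniform $\mathcal C^\eta$ moments of $\mathbb Q^\kappa_{0,0}$ from \cref{lemma:tightness}, combined with stationarity.

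You differ from the paper in how the weak-in-space time bound is upgraded to a uniform-in-space one. The paper sums the Lyons--Zheng bound over the Dirichlet eigenbasis to obtain $\mathbb E\|u^\kappa_t-u^\kappa_s\|_{H^{-\beta}}^p\lesssim|t-s|^{p/2}$ for $\beta>1/2$. It then invokes the abstract interpolation inequality of \cref{lemma:interpolation} between $\mathcal C^\eta$ and $H^{-\beta}$, imported from Triebel via \cite{Labb2018}. Finally it applies Kolmogorov to the $\mathcal C^\gamma([0,1])$-valued process $t\mapsto u^\kappa_t$. You instead interpolate by hand, pointwise, with a single mollifier of $L^2$ norm $\delta^{-1/2}$, and then use a two-parameter Kolmogorov criterion in $(t,x)$.

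Your route is more elementary and self-contained. It also gives an explicit time exponent $\eta/(1+2\eta)$, which approaches the natural value $1/4$ as $\eta\uparrow 1/2$. The paper's route yields a somewhat stronger conclusion, namely Hölder continuity in time with values in $\mathcal C^\gamma$, i.e.\ moments of $\sup_{s\neq t}\|u^\kappa_t-u^\kappa_s\|_{\mathcal C^\gamma}/|t-s|^\nu$. It pays for this with an external functional-analytic lemma whose exponents $\gamma,\theta$ are not explicit.

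The one point in your write-up that deserves a line of justification is the boundary adjustment. Since the Lyons--Zheng estimate depends on the test function only through $\|\phi\|_{L^2}$, it extends by density to any $\phi\in L^2(0,1)$; the paper in fact states it for all $h\in H$. Alternatively, for $x$ within $\delta$ of $\{0,1\}$, the Dirichlet condition gives directly $|u_t(x)-u_s(x)|\le(\|u_t\|_{\mathcal C^\eta}+\|u_s\|_{\mathcal C^\eta})\,\delta^\eta$.
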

In order to prove the result, we make use of the stationarity.
Specifically, we apply the Lyons-Zheng decomposition to some approximation sequence given by~\cref{prop:stability}, and we conclude by an interpolation argument.

For any $\eta>0$ and $r\ge1$, we introduce the Sobolev-Slobodeckij space:

	\begin{equation*}
		W^{\eta,r}(0,1):=\left\{f:\,\int_{[0,1]}|f(x)|^r\,\dd x+\int_{[0,1]^2}\frac{|f(x)-f(y)|^r}{|x-y|^{\eta r+1}}\,\dd x\dd y<\infty\right\}\,.
	\end{equation*}

	\noindent Let $(e_k)_{k\ge1}$ be a Hilbert basis of $H=L^2(0,1)$ made up of eigenfunctions of the Dirichlet Laplacian on $[0,1]$. For any $\beta>0$, we introduce the Sobolev space of distributions
	\begin{equation*}
		H^{-\beta}(0,1):=\left\{f\in\mathcal S'(0,1):\,\|f\|_{-\beta}^2:=\sum_{k\ge1}k^{-2\beta}|\langle f,e_k\rangle|^2<\infty\right\}\,,
	\end{equation*}
	where $\mathcal S'(0,1)$ is the set of distributions on $(0,1)$ and $\langle \cdot , \cdot \rangle$ is the pairing of distributions. The following result is the content of~\cite[Lemma 16]{Labb2018}, the proof presented therein is based on interpolation inequalities from~\cite{Triebel1978}.

\begin{lemma}\label{lemma:interpolation} Let $\eta=1/2-\eps$ and $\beta=1/2+\eps$. For $\eps>0$ small enough, there exist $c>0$ and $\gamma,\theta\in (0,1)$ such that
		\begin{equation*}
			\|f\|_{\mathcal C^{\gamma}}\le c\|f\|_{\mathcal C^{\eta}}^{\theta}\|f\|_{H^{-\beta}}^{1-\theta}\,\quad\forall f\in\mathcal C^{\eta}\cap H^{-\beta}\,.
		\end{equation*}
\end{lemma}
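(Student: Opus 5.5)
The plan is to prove the interpolation inequality by Littlewood--Paley (or, equivalently, sine-series) decomposition adapted to the Dirichlet basis $(e_k)$, with $e_k(x)=\sqrt2\sin(k\pi x)$. Write $f=\sum_k f_k e_k$ with $f_k=\langle f,e_k\rangle$, and split $f=f_{\le N}+f_{>N}$ at a frequency $N$ to be optimised.

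\textbf{Low frequencies.} For the low-frequency part I would bound $\|f_{\le N}\|_{\mathcal C^{\gamma}}$ by the $H^{-\beta}$ norm. Since $\|e_k\|_{\mathcal C^\gamma}\lesssim k^{\gamma}$, Cauchy--Schwarz gives
\[
\|f_{\le N}\|_{\mathcal C^\gamma}\lesssim \sum_{k\le N}k^{\gamma}|f_k|\le \Big(\sum_{k\le N}k^{2\gamma+2\beta}\Big)^{1/2}\|f\|_{H^{-\beta}}\lesssim N^{\gamma+\beta+1/2}\|f\|_{H^{-\beta}}.
\]

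\textbf{High frequencies.} For the high-frequency part I need $\|f_{>N}\|_{\mathcal C^\gamma}\lesssim N^{-(\eta-\gamma)+\delta}\|f\|_{\mathcal C^\eta}$ for small $\delta>0$. Sharp spectral truncation is not bounded on Hölder spaces, so I would replace it with a smooth cutoff $\chi(k/N)$, or equivalently use $f-P_{N^{-2}}f$ with the Dirichlet heat semigroup $P_t$ already introduced. Take $f_{\le N}:=P_{N^{-2}}f$. Standard heat-kernel estimates for the Dirichlet kernel on $[0,1]$ then give two bounds:
\[
\|f-P_tf\|_{\mathcal C^\gamma}\lesssim t^{(\eta-\gamma)/2}\|f\|_{\mathcal C^\eta},
\qquad
\|P_tf\|_{\mathcal C^\gamma}\lesssim \|P_{t}f\|_{\mathcal C^1}\lesssim t^{-(1+\beta+1/2+\delta)/2}\|f\|_{H^{-\beta}}.
\]
The first bound requires $f(0)=f(1)=0$. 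This holds in the application, since $u_t$ vanishes at the boundary, and I would add it as a harmless assumption if needed. The second bound follows from the spectral representation: $\|P_tf\|_{\mathcal C^1}\le\sum_k k\,e^{-k^2\pi^2t/2}|f_k|$, followed by Cauchy--Schwarz.

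\textbf{Optimising.} With $t=N^{-2}$ we get
\[
\|f\|_{\mathcal C^\gamma}\lesssim N^{-(\eta-\gamma)}A+N^{a}B,\qquad A=\|f\|_{\mathcal C^\eta},\ B=\|f\|_{H^{-\beta}},
\]
for some fixed $a>0$. Choosing $N=(A/B)^{1/(a+\eta-\gamma)}$ yields $\|f\|_{\mathcal C^\gamma}\lesssim A^{\theta}B^{1-\theta}$ with $\theta=a/(a+\eta-\gamma)\in(0,1)$, for any $\gamma\in(0,\eta)$; for instance take $\gamma=\eta/2$. If $A/B<1$, simply take $N=1$ and use the trivial bound $\|f\|_{\mathcal C^\gamma}\le\|f\|_{\mathcal C^\eta}\lesssim A^\theta B^{1-\theta}\cdot(A/B)^{1-\theta}\le A^\theta B^{1-\theta}$. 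The smallness of $\eps$ is only used to ensure $\eta>0$, so that $\gamma\in(0,\eta)$ exists.

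\textbf{Main obstacle.} The main obstacle is the Hölder approximation estimate $\|f-P_tf\|_{\mathcal C^\gamma}\lesssim t^{(\eta-\gamma)/2}\|f\|_{\mathcal C^\eta}$ for the Dirichlet semigroup near the boundary. I would handle it by extending $f$ oddly and $2$-periodically to $\R$, where $P_t$ becomes convolution with $g_t$. The odd extension of a $\mathcal C^\eta$ function vanishing at $0$ and $1$ is still $\mathcal C^\eta$ with comparable norm, so the estimate reduces to the classical whole-line bound for the Gaussian kernel, obtained by interpolating $\|h\|_{\mathcal C^\gamma}\le\|h\|_\infty^{1-\gamma/\eta}\|h\|_{\mathcal C^\eta}^{\gamma/\eta}$ with $h=f-G_tf$ and $\|h\|_\infty\lesssim t^{\eta/2}\|f\|_{\mathcal C^\eta}$. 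Alternatively, one may simply invoke the real/complex interpolation results of~\cite{Triebel1978}, identifying $\mathcal C^\gamma$ as lying between $B^{\eta}_{\infty,\infty}$ and $H^{-\beta}$ up to embeddings, as in~\cite[Lemma 16]{Labb2018}.
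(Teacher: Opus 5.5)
Your argument is correct, but it takes a different route from the paper. The paper gives no self-contained proof: it quotes \cite[Lemma 16]{Labb2018}, where the inequality comes from abstract interpolation inequalities of \cite{Triebel1978}.

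You instead run an explicit splitting with the Dirichlet heat semigroup, $f=(f-P_tf)+P_tf$:
\begin{itemize}
\item[(i)] For the first piece, the odd $2$-periodic extension reduces $\|f-P_tf\|_{\mathcal C^\gamma}\lesssim t^{(\eta-\gamma)/2}\|f\|_{\mathcal C^\eta}$ to the Gaussian estimate on $\R$. You combine this with the elementary interpolation $[h]_\gamma\lesssim\|h\|_\infty^{1-\gamma/\eta}[h]_\eta^{\gamma/\eta}$.
\item[(ii)] For the second piece, the sine expansion and Cauchy--Schwarz give $\|P_tf\|_{\mathcal C^1}\lesssim t^{-(3/2+\beta)/2}\|f\|_{H^{-\beta}}$.
\item[(iii)] Optimising in $t$ yields the inequality for any $\gamma\in(0,\eta)$, with $\theta=a/(a+\eta-\gamma)$ and $a=3/2+\beta$.
\end{itemize}

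\textbf{What each route buys.} Your route is elementary and fully explicit. It shows the smallness of $\eps$ plays no role beyond $\eta>0$: the inequality holds for every $\eta\in(0,1)$ and every $\beta\ge 0$. The price is the restriction $f(0)=f(1)=0$. This restriction is genuinely needed for step (i), since $|f(0)-P_tf(0)|=|f(0)|$ for all $t>0$. It is harmless here, because the lemma is only applied to $f=u^\kappa_t-u^\kappa_s$. The cited route delegates everything to general interpolation theory, at the cost of identifying the Hölder space and the Dirichlet-defined $H^{-\beta}$ within Besov-type scales. It also naturally produces the scaling-sharp relation $\gamma=\theta\eta-(1-\theta)(\beta+\tfrac12)$. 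Your passage through $\mathcal C^1$ loses derivatives here. You could recover the sharp relation by interpolating $\|P_tf\|_\infty\lesssim t^{-(\beta+1/2)/2}\|f\|_{H^{-\beta}}$ with your $\mathcal C^1$ bound, which replaces $a$ by $\beta+\tfrac12+\gamma$. For the Kolmogorov argument in the proof of the tightness proposition, only $\theta<1$ matters anyway.

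\textbf{Cosmetic points.}
\begin{itemize}
\item The sharp-truncation computation in your first step is superseded by the semigroup version and can be dropped.
\item The extra $\delta$ in the exponent is unnecessary.
\item The case $\|f\|_{\mathcal C^\eta}<\|f\|_{H^{-\beta}}$ never occurs, since $\|f\|_{H^{-\beta}}\le\|f\|_{L^2}\le\|f\|_\infty$.
\end{itemize}
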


\begin{proof}[Proof of~\cref{prop:tightness}]
	\emph{Control of the increments in time}.
	We prove that for all $T>0$, there exists $p>0$ such that
	\begin{equation}\label{eq:tightness_criterion}
		\lim_{h\downarrow0}\limsup_{\kappa\rightarrow+\infty}\mathbb E\left[\vphantom{\sup_{s,t\le T}}\right.\sup_{\substack{s,t\le T\\|t-s|\le h}}\|u^{\kappa}_t-u^{\kappa}_s\|^p_{\mathcal C([0,1])}\left.\vphantom{\sup_{s,t\le T}}\right]=0\,.
	\end{equation}

	Let $(b^n)$ be the sequence given by~\eqref{eq:approximate_b}. For every $\kappa>0$, $n\ge0$, we denote by $u^{\kappa,n}$ the solution of~\eqref{eq:SHE} in the sense of~\cref{theorem:existence_uniqueness_smooth}, started from $\mathbb Q_{0,0}^{\kappa,n}$ given by~\eqref{eq:approx_reversible_measure}.
	We fix $\kappa>0$, $n\ge0$, and $T>0$. The Lyons-Zheng decomposition \cite{LyonsZheng} writes for all $h\in H$, for all $0\le t\le T$,

	\begin{equation*}
		(h,u^{\kappa,n}_t-u_0^{\kappa,n})_H=\frac12 M_t+\frac12 (N_T-N_t)\,,
	\end{equation*}
	where $M$ (resp.~$N$) is a martingale in the natural filtration of $u^{\kappa,n}$ (resp.~that of $(u^{\kappa,n}_{T-t})_{t\in[0,T]}$).
	Moreover, the quadratic variations are both equal to

	\begin{equation*}
		\langle M\rangle_t=\langle N\rangle_t=t\|h\|_{H}^2\,.
	\end{equation*}

	\noindent By the Burkholder-Davis-Gundy inequality and the Minkowski inequality, there exists $c_p\in(0,\infty)$ such that

	\begin{equation*}
		\mathbb E\left[|(h,u^{\kappa,n}_t-u^{\kappa,n}_s)_H|^p\right]^{1/p}\le c_p\|h\|_H(t-s)^{1/2}.
	\end{equation*}

	\noindent By Fatou's lemma and the convergence of~\cref{prop:stability}, this implies that
	\begin{equation}\label{eq:tightness_u_1}
		\mathbb E\left[|(h,u^{\kappa}_t-u^{\kappa}_s)_H|^p\right]^{1/p}\le\liminf_{n\rightarrow\infty}\mathbb E\left[|(h,u^{\kappa,n}_t-u^{\kappa,n}_s)|^p\right]^{1/p}\le c_p\|h\|_H(t-s)^{1/2}
	\end{equation}
	which implies, by the Minkowski inequality for $p\ge2$,
	\begin{equation}\label{eq:tightness_u_2}
	\begin{split}
		\mathbb E\left[\|u^{\kappa}_t-u^{\kappa}_s\|_{H^{-\beta}}^p\right]^{2/p}&=\mathbb E\left[\left(\sum_{k\ge1}k^{-2\beta}|(e_k,u_t^{\kappa}-u_s^{\kappa})_H|^2\right)^{p/2}\right]^{2/p}\\
		&\le\sum_{k\ge1}k^{-2\beta}\mathbb E[|(e_k,u^\kappa_t-u^\kappa_s)_H|^p]^{2/p}\le C(t-s)
	\end{split}
	\end{equation}
	since $\beta>1/2$. Note that $C$ does not depend on $\kappa > 0$, nor on $s,t$.

	\noindent Let $0\le s\le t$, $p\ge1$ and $\eta<1/2$. By the triangle inequality, the Minkowski inequality, and stationarity at the second line,
	\begin{equation*}
	\begin{split}
		\mathbb E[\|u_t^{\kappa}-u_s^{\kappa}\|_{\mathcal C^{\eta}}^p]^{1/p}&\le\mathbb E[\|u_s^{\kappa}\|_{\mathcal C^{\eta}}^p]^{1/p}+\mathbb E[\|u_t^{\kappa}\|_{\mathcal C^{\eta}}^p]^{1/p}\\
		&\le 2\mathbb E[\|u_0^{\kappa}\|_{\mathcal C^{\eta}}^p]^{1/p}=2\mathbb Q_{0,0}^{\kappa}\left[\|w\|_{\mathcal C^{\eta}}^p\right]^{1/p}
	\end{split}
	\end{equation*}
	which is finite uniformly over all $\kappa > 1$ by~\cref{lemma:tightness}. This leads to
	\begin{equation}\label{eq:tightness_u_3}
		\sup_{\kappa > 1}\sup_{0\le s\le t}\mathbb E\left[\left\|u^{\kappa}_t-u^{\kappa}_s\right\|^p_{\mathcal C^{\eta}}\right]<\infty\,.
	\end{equation}

	\noindent\cref{lemma:interpolation} implies via Hölder's inequality that there exist $\gamma,\theta\in(0,1)$ such that for all $p$ large enough,

	\begin{equation*}
		\mathbb E\left[\left\|u^{\kappa}_t-u^{\kappa}_s\right\|^p_{\mathcal C^{\gamma}}\right]\lesssim(t-s)^{\frac{p(1-\theta)}{2}}\,,
	\end{equation*}
	uniformly over all $\kappa > 1$ and $0\le s\le t\le T$. Applying Kolmogorov's continuity theorem, this implies that for all $\nu\in(0,(1-\theta)/2)$ and all $p\ge 1$, we have
	\begin{equation*}
		\sup_{\kappa>1}\mathbb E\left[\sup_{s\neq t\in[0,T]}\frac{\left\|u^{\kappa}_t-u^{\kappa}_s\right\|^p_{\mathcal C^{\gamma}}}{|t-s|^{\nu p}}\right]<\infty\,.
	\end{equation*}
	This implies that~\eqref{eq:tightness_criterion} is fulfilled.

	\noindent\emph{Tightness of the one-dimensional marginals}.
	The processes are stationary, so it suffices to prove tightness of $(\mathbb Q_{0,0}^{\kappa})_{\kappa>1}$.
	The latter is a direct consequence of~\cref{lemma:tightness}.
\end{proof}

\subsection{Reflection measure}

In this subsection, we introduce tools to deal with the reflection measure of~\eqref{eq:SPDE_NP}.

Let $\mathbb M$ be the set of all Radon measures $m$ (i.e. finite on compact subsets) on $[0,\infty)\times(0,1)$ satisfying $\int_{[0,T]\times(0,1)}x(1-x)\,m(\dd t,\dd x)<\infty$ for every $T>0$.
We endow $\mathbb M$ with the smallest topology that makes
\begin{equation*}
	m\mapsto\int_{[0,\infty)\times(0,1)}x(1-x)\phi(t,x)\,m(\dd t,\dd x)
\end{equation*}
continuous for all $\phi\in\mathcal C_c([0,\infty)\times[0,1],\mathbb R)$. It was proved in~\cite[Lemma 15]{Etheridge2014}, using classical arguments, that $\mathbb M$ is a Polish space
so tightness in $\mathbb M$ is equivalent to relative compactness.

In the sequel, we rely on the following tightness criterion. To facilitate the writing, we let $\mathcal C^\infty_0([0,1])$ denote the set of all functions in $\mathcal C^\infty([0,1])$ that vanish at $x=0$ and $x=1$.

\begin{lemma}\label{Lemma:TightMandphi}
	Let $(\eta^i)_{i\in I}$ be a family of random elements of $\mathbb M$. Assume that for every function $\phi\in \mathcal C^\infty_0([0,1])$, $(X^i)_{i\in I}$ is tight in $\mathcal C([0,\infty),\R)$ where
	$$ X_t^i := \int_{[0,t]\times[0,1]} \phi(x) \, \eta^i(\dd s, \dd x)\;,\quad t\ge 0\;.$$
	Then $(\eta^i)_{i\in I}$ is tight in $\mathbb M$ and any converging subsequence $(\eta^{i_k})_{k\ge 1}$ with limit, say, $\eta$, satisfies the following property: for any $\phi\in \mathcal C^\infty_0([0,1])$,  $(X^{i_k})_{k\ge 1}$ converges in law to the continuous process
	$$ X_t := \int_{[0,t]\times[0,1]} \phi(x) \, \eta(\dd s, \dd x)\;,\quad t\ge 0\;.$$
\end{lemma}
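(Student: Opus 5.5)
We treat this as a statement about positive Radon measures whose pairing with the weight $x(1-x)$ defines the topology. The measures $\eta^i$ are implicitly nonnegative; they are reflection or penalisation measures. The first step is to reduce everything to a single test function. Take $\phi_0\in\mathcal C^\infty_0([0,1])$ with $\phi_0>0$ on $(0,1)$ and $\phi_0(x)\ge x(1-x)$; for instance $\phi_0(x)=2\sin(\pi x)$ works, since $\sin(\pi x)\ge x(1-x)\cdot \pi/\,$const. By hypothesis the processes $X^{i,0}_t=\int_{[0,t]\times[0,1]}\phi_0\,\dd\eta^i$ are tight in $\mathcal C([0,\infty),\R)$. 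In particular, for each $T$ the family $(X^{i,0}_T)_i$ is tight in $\R$. Hence for every $\delta>0$ and $T$ there is $K$ with $\sup_i\mathbb P(\int_{[0,T]\times(0,1)}x(1-x)\,\eta^i(\dd s,\dd x)>K)<\delta$.

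Sets of the form $\{m:\int_{[0,T]\times(0,1)}x(1-x)\,\dd m\le K_T\ \forall T\in\mathbb N\}$ are relatively compact in $\mathbb M$. To see this, view $x(1-x)m$ as a locally finite measure on $[0,\infty)\times[0,1]$ with locally bounded mass. Vague compactness then holds for measures tested against $\mathcal C_c([0,\infty)\times[0,1])$, by Banach–Alaoglu/Prokhorov on each compact $[0,T]\times[0,1]$ followed by a diagonal argument. A summable choice $\delta_T=\delta 2^{-T}$ then gives tightness of $(\eta^i)$ in $\mathbb M$.

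For the identification, let $\eta^{i_k}\to\eta$ in law in $\mathbb M$ and fix $\phi\in\mathcal C_0^\infty([0,1])$. Write $\phi(x)=x(1-x)\psi(x)$ with $\psi$ continuous on $[0,1]$, which is possible since $\phi$ is smooth and vanishes at the endpoints. For a fixed $t$ we would like to test against $\1_{[0,t]}(s)\psi(x)$, but this is not continuous in time. Therefore we first prove convergence of finite-dimensional marginals of the smoothed quantities $\int\chi(s)\psi(x)x(1-x)\,\dd\eta^{i_k}$ for $\chi\in\mathcal C_c([0,\infty))$. These converge jointly in law, because the map is continuous on $\mathbb M$. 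By tightness of $(X^{i_k})$ in $\mathcal C$, along a further subsequence $(\eta^{i_k},X^{i_k})\to(\eta,Y)$ jointly in law, where $Y$ is continuous. Then $\int\chi(s)\,\dd X^{i_k}_s=-\int\chi'(s)X^{i_k}_s\,\dd s$ for $\chi\in\mathcal C^1_c$, and this passes to the limit, giving $\int\chi\,\dd Y=\int\chi\psi x(1-x)\,\dd\eta$ almost surely for countably many $\chi$. From this, $Y_t=\int_{[0,t]\times[0,1]}\phi\,\dd\eta$ for all $t$, using the continuity of $Y$ and $Y_0=\lim X^{i_k}_0=0$. Uniqueness of the limit then yields convergence of the whole subsequence. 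Continuity of $X$ follows as well, and it also shows that $\eta$ puts no mass on time slices $\{t\}\times(0,1)$ weighted by $\phi$.

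The main obstacle is the last step: passing from time-smoothed test functions to indicators $\1_{[0,t]}$. It is handled by the joint-limit argument together with the continuity of $Y$. Atoms of $\eta$ in time are excluded because the limits of the continuous processes $X^{i_k}$ are continuous; applied to $\phi_0>0$, this rules them out on all of $(0,1)$.
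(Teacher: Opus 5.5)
\textbf{The gap is in the tightness half.} You claim that the set $\{m:\int_{[0,T]\times(0,1)}x(1-x)\,\dd m\le K_T\ \forall T\}$ is relatively compact in $\mathbb M$. That is false for the topology the paper puts on $\mathbb M$.

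Your Banach--Alaoglu/diagonal argument does give a vague limit $\mu$ of $x(1-x)m_n$ among Radon measures on $[0,\infty)\times[0,1]$. But $\mu$ may charge the lines $\{x=0\}$ or $\{x=1\}$, and then it is not of the form $x(1-x)m$ with $m\in\mathbb M$.

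Take $m_n:=n\,\dd t\otimes\delta_{1/n}$. Then $\int_{[0,T]\times(0,1)}x(1-x)\,\dd m_n\le T$ and $x(1-x)m_n\to\dd t\otimes\delta_0$. No subsequence converges in $\mathbb M$. A limit $m$ would satisfy $\int x(1-x)\phi\,\dd m=\int_0^\infty\phi(t,0)\,\dd t$ for every $\phi\in\mathcal C_c([0,\infty)\times[0,1])$. This is incompatible with $\int_{[0,T]\times(0,\delta)}x(1-x)\,\dd m\to0$ as $\delta\downarrow0$.

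Worse, this deterministic family satisfies the hypothesis of the lemma: $X^n_t=t\,n\,\phi(1/n)\to t\,\phi'(0)$ in $\mathcal C([0,\infty),\R)$ for every $\phi\in\mathcal C^\infty_0([0,1])$. So this step cannot be closed from the stated assumptions.

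What is missing is a uniform control of the weighted mass near the spatial boundary, for example $\lim_{\delta\downarrow0}\sup_i\mathbb P(\int_{[0,T]\times((0,\delta)\cup(1-\delta,1))}x(1-x)\,\dd\eta^i>\eps)=0$ for all $T,\eps>0$. With such a condition your compactness argument does go through. Indeed, $\mu([0,T)\times[0,\delta))\le\liminf_n\mu_n([0,T)\times[0,\delta))$ then prevents the limit from charging the boundary lines.

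\textbf{The paper's proof does not supply this either.} Its reduction to tightness of the real variables $\int\psi\,\dd\eta^i$, using a domination $|\psi(t,x)|\le\phi(x)$, only works for test functions vanishing at $x\in\{0,1\}$. That is, it handles the vague topology of $[0,\infty)\times(0,1)$. In that topology the second conclusion fails on the same example: $m_n\to0$ vaguely while $X^n_t\to t\phi'(0)$. So the lemma needs the extra boundary hypothesis. In the application it can be checked, for instance, from the weak formulation tested against $x\,\theta(x/\delta)$ with $\theta$ a smooth cutoff, using stationarity and the uniform H\"older moments of the invariant measures.

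\textbf{Your identification half is correct and takes a different route from the paper.} The paper uses Skorokhod representation and sandwiches $\1_{[0,t]}$ between continuous cutoffs, which forces it to assume $\phi\ge0$ and split $\phi=\phi^+-\phi^-$. You instead test the joint limit against $\chi\in\mathcal C^1_c([0,\infty))$ through $\int\chi\,\dd X^{i_k}=-\int\chi'X^{i_k}\,\dd s$. This identifies $Y$ a.e. with $t\mapsto\int_{[0,t]\times[0,1]}\phi\,\dd\eta$, hence everywhere by right-continuity, and handles signed $\phi$ directly.

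You should drop the claim $X^{i_k}_0=0$. It fails if $\eta^{i_k}$ charges $\{0\}\times(0,1)$, and it is not needed, since the integration by parts formula already accounts for an atom at time $0$.
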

Note that one (implicit) requirement of the lemma is that each $X^i$ be a continuous process, which is not at all granted for a generic element $\eta^i \in \mathbb M$.
\begin{proof}
	To prove that $(\eta^i)_{i\in I}$ is tight, it suffices to prove that for every $\psi \in \mathcal C_c^\infty([0,\infty)\times[0,1],\mathbb R)$, the family
	$$ \int_{[0,\infty)\times [0,1]} \psi(s,x) \, \eta^i(\dd s,\dd x)\;,\quad i\in I\;,$$
	is tight in $\R$. Given such a function $\psi$, one can find a non-negative $\phi \in \mathcal C^\infty_0([0,1])$ such that $|\psi(t,x) | \le \phi(x)$ for all $t\ge 0 $ and $x\in[0,1]$, so the assertion of the statement ensures tightness of $(\eta^i)_{i\in I}$.
	
	For simplicity, we denote by $(\eta^n)_{n\ge 0}$ a converging subsequence and we let $\eta$ be its limit. Fix $\phi$ as in the statement. Consider a further subsequence $(\eta^{n_k},X^{n_k})_k$ that converges in law to, say, $(\eta,X)$. If we show that
	$$ X_t = \int_{[0,t]\times[0,1]} \phi(x) \, \eta(\dd s, \dd x)\;,\quad t\ge 0\;,$$
	then we will deduce that all converging subsequences of $(X^n)_n$ have the same limit, and therefore that $(\eta_n,X_n)$ converges to $(\eta,X)$.
	
	We use the Skorokhod representation theorem and suppose that the convergence in law of $(\eta^{n_k},X^{n_k})_k$ holds a.s.~on some other probability space. Let us suppose that $\phi\ge0$, the general case is treated by writing $\phi=\phi^+-\phi^-$, where $\phi^+$ and $\phi^-$ both remain in $\mathcal C^\infty_0([0,1])$.
	For every $t>0$, we set
	\begin{equation}\label{eq:Xtilde}
			\widetilde{X}_t=\int_{[0,t]\times[0,1]}\phi(x)\,\eta(\dd s,\dd x)\,.
	\end{equation}
	We aim at proving that the process $\widetilde{X}$ is a.s. continuous and that, as processes, $\widetilde{X}=X$.
		
		\noindent \emph{Proof that $\widetilde{X}$ is continuous}. For $a>0,\eps>0$, we let $\chi_{a,\eps}:\R\to [0,1]$ be any continuous function such that $\1_{[a,a+\eps]}\le\chi_{a,a+\eps}\le\1_{[a-\eps,a+2\eps]}$.
		Let $\delta>0$, $t>0$,
		\begin{equation*}
			|\widetilde{X}_{t+\delta}-\widetilde{X}_{t}|\le\int_{[0,\infty)\times(0,1)}\1_{[t,t+\delta]}(s)\phi(x)\,\eta(\dd s,\dd x)\le\int_{[0,\infty)\times(0,1)}\chi_{t,t+\delta}(s)\phi(x)\,\eta(\dd s,\dd x)\,.
		\end{equation*}
		Because $\chi_{t,t+\delta}\otimes\phi\in\mathcal C_c([0,\infty)\times(0,1))$, it follows by the definition of the topology on $\mathbb M$ that one has the a.s. limit
		\begin{equation*}
			\int_{[0,\infty)\times(0,1)}\chi_{t,t+\delta}(s)\phi(x)\,\eta(\dd s,\dd x)=\lim_{k\rightarrow\infty}\int_{[0,\infty)\times(0,1)}\chi_{t,t+\delta}(s)\phi(x)\,\eta^{n_k}(\dd s,\dd x)\,.
		\end{equation*}
		Furthermore, for every $k\ge0$,
		\begin{equation*}
			\begin{split}
				\int_{[0,\infty)\times(0,1)}\chi_{t,t+\delta}(s)\phi(x)\,\eta^{n_k}(\dd s,\dd x)&\le\int_{[0,\infty)\times(0,1)}\1_{t-\delta,t+2\delta}(s)\phi(x)\,\eta^{n_k}(\dd s,\dd x)\\
				&=X^{n_k}_{t+2\delta}-X^{n_k}_{t-\delta}\,.
			\end{split}
		\end{equation*}
		Also, by continuity of the map $\mathcal C([0,\infty))\ni x\mapsto x_{t+2\delta}-x_{t-\delta}$,
		\begin{equation*}
			\lim_{k\rightarrow\infty}\,X^{n_k}_{t+2\delta}-X^{n_k}_{t-\delta}=X_{t+2\delta}-X_{t-\delta}\,.
		\end{equation*}
		Finally,
		\begin{equation*}
			\limsup_{\delta\downarrow0}|\widetilde{X}_{t+\delta}-\widetilde{X}_{t}|\le \limsup_{\delta\downarrow0}|X_{t+2\delta}-X_{t-\delta}|=0\,.
		\end{equation*}
		This proves right continuity of $\widetilde{X}$, left continuity is proved similarly.
		
		\noindent\emph{Proof that $X=\widetilde X$}. For $a>0,\eps>0$, we let $\lambda_{a,\eps}:\R\to[0,1]$ be any continuous function such that $\1_{[0,a]}\le\lambda_{a,\eps}\le\1_{[0,a+\eps]}$.
		For any $t>0$
		\begin{equation*}
			|X_t-\widetilde X_t|=\lim_{k\rightarrow\infty}|X^{n_k}_t-\widetilde X_t|\,.
		\end{equation*}
		For $n\ge0$, for every $\eps>0$,
		\begin{equation*}
			\begin{split}
				|X^{n_k}_t-\widetilde X_t|=&|\eta^{n_k}(\1_{[0,t]}\otimes\phi)-\eta(\1_{[0,t]}\otimes\phi)|\\
				\le&|\eta^{n_k}(\1_{[0,t]}\otimes\phi)-\eta^{n_k}(\lambda_{t,\eps}\otimes\phi)|+|\eta^{n_k}(\lambda_{t,\eps}\otimes\phi)-\eta(\lambda_{t,\eps}\otimes\phi)|\\
				&+|\eta(\lambda_{t,\eps}\otimes\phi)-\eta(\1_{[0,t]}\otimes\phi)|\,.
			\end{split}
		\end{equation*}
		First,
		\begin{equation*}
			\begin{split}
				|\eta^{n_k}(\1_{[0,t]}\otimes\phi)-\eta^{n_k}(\lambda_{t,\eps}\otimes\phi)|&=\eta^{n_k}(\lambda_{t,\eps}\otimes\phi)-\eta^{n_k}(\1_{[0,t]}\otimes\phi)\\
				&\le\eta^{n_k}(\1_{[0,t+\eps]}\otimes\phi)-\eta^{n_k}(\1_{[0,t]}\otimes\phi)=X_{t+\eps}^{n_k}-X_t^{n_k}\,.
			\end{split}
		\end{equation*}
		Similarly,
		\begin{equation*}
			|\eta(\1_{[0,t]}\otimes\phi)-\eta(\lambda_{t,\eps}\otimes\phi)|\le\widetilde X_{t+\eps}-\widetilde X_t\,.
		\end{equation*}
		Also, because $\lambda_{t,\eps}\otimes\phi\in\mathcal C_c([0,\infty)\times(0,1))$,
		\begin{equation*}
			\lim_{k\rightarrow\infty}|\eta^{n_k}(\lambda_{t,\eps}\otimes\phi)-\eta(\lambda_{t,\eps}\otimes\phi)|=0
		\end{equation*}
		As a consequence
		\begin{equation*}
			\lim_{k\rightarrow\infty}|X^{n_k}_t-\widetilde X_t|\le|X_{t+\eps}-X_t|+|\widetilde X_{t+\eps}-\widetilde X_t|\,.
		\end{equation*}
		Letting $\eps\downarrow0$ in the previous equation yields
		\begin{equation*}
			\lim_{k\rightarrow\infty}|X^{n_k}_t-\widetilde X_t|=0\,,
		\end{equation*}
		the desired result. As $\widetilde X$ is a.s. a continuous process, we deduce that, a.s., for every $t>0$, $X_t=\widetilde X_t$.
\end{proof}

\begin{lemma}\label{lemma:continuity_u_eta}
	Let $\psi$ a nonnegative function in $\mathcal C_c([0,\infty)\times(0,1))$, we introduce $G:\mathcal C_c([0,\infty)\times[0,1],\mathbb R)\times\mathbb M\longrightarrow\R$,
	\begin{equation*}
		G(v,\eta):=\int_{[0,\infty)\times(0,1)}v\,\psi\,\dd\eta\,.
	\end{equation*}
	Then the map $G$ is continuous.
\end{lemma}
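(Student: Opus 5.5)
We prove continuity by taking sequences $v_n\to v$ (uniformly on compacts of $[0,\infty)\times[0,1]$) and $\eta_n\to\eta$ in $\mathbb M$, and showing $G(v_n,\eta_n)\to G(v,\eta)$. Both spaces are metrisable: $\mathbb M$ is Polish by \cite[Lemma 15]{Etheridge2014}, and the space of continuous functions carries the topology of uniform convergence on compacts. Hence sequential continuity suffices. We split
\[
G(v_n,\eta_n)-G(v,\eta)=\int (v_n-v)\,\psi\,\dd\eta_n+\Big(\int v\,\psi\,\dd\eta_n-\int v\,\psi\,\dd\eta\Big).
\]

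\emph{Second term.} Write $v\psi(t,x)=x(1-x)\,\phi(t,x)$ with $\phi:=v\psi/(x(1-x))$. Since $\psi$ has compact support in $[0,\infty)\times(0,1)$, the support of $\psi$ stays at positive distance from $\{x=0\}\cup\{x=1\}$. Hence $\phi$ is continuous and compactly supported, so we may extend it by $0$ to an element of $\mathcal C_c([0,\infty)\times[0,1])$. By the definition of the topology on $\mathbb M$, $\int x(1-x)\phi\,\dd\eta_n\to\int x(1-x)\phi\,\dd\eta$, which is exactly the convergence of the second term.

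\emph{First term.} Let $K:=[0,T]\times[a,1-a]$ be a compact set containing the support of $\psi$, with $a>0$. Then
\[
\Big|\int (v_n-v)\psi\,\dd\eta_n\Big|\le \sup_K|v_n-v|\,\|\psi\|_\infty\,\eta_n(K).
\]
The factor $\sup_K|v_n-v|$ tends to $0$, so it remains to bound $\eta_n(K)$ uniformly in $n$. For this, choose a nonnegative $\chi\in\mathcal C_c([0,\infty)\times(0,1))$ with $\chi\ge \1_K$, for instance $\chi=1$ on $K$ and supported in $[0,T+1]\times[a/2,1-a/2]$. Writing $\chi=x(1-x)\cdot(\chi/(x(1-x)))$ as before, convergence in $\mathbb M$ gives $\int\chi\,\dd\eta_n\to\int\chi\,\dd\eta<\infty$. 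Hence $\sup_n\eta_n(K)\le\sup_n\int\chi\,\dd\eta_n<\infty$, and the first term tends to $0$.

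The only delicate point is this uniform bound on $\eta_n(K)$, which is needed because the measures $\eta_n$ move with $n$. It is handled by testing against the dominating function $\chi$ as above, and it relies on the positivity of the measures in $\mathbb M$. If $G$ is intended to be defined on the full space $\mathcal C([0,\infty)\times[0,1])$, the same argument applies verbatim, since only uniform convergence of $v_n$ on the compact $K$ is used.
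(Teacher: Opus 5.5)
Your proposal is correct and follows the paper's proof: you use the same splitting into $G(v_n,\eta_n)-G(v,\eta_n)$ and $G(v,\eta_n)-G(v,\eta)$, and you handle the second term by testing the convergence in $\mathbb M$ against $v\psi$ (you usefully make explicit the rewriting $v\psi=x(1-x)\phi$, which the paper leaves implicit). The only difference is in the first term: the paper bounds it directly by $\sup_{[0,T]\times[0,1]}|v_n-v|\int\psi\,\dd\eta_n$, using $\psi\ge 0$ and $\int\psi\,\dd\eta_n\to\int\psi\,\dd\eta$, whereas your auxiliary dominating function $\chi$ yields the same uniform bound by a slightly longer route that does not use the sign of $\psi$.
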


\begin{proof}
	Let $(v^n,\eta^n)$ be a sequence that converges to $(v,\eta)$ in $\mathcal C([0,\infty)\times[0,1],\mathbb R)\times\mathbb M$. We bound
	\begin{equation*}
		\left|G(v^n,\eta^n)-G(v,\eta)\right|
	\end{equation*}
	by 
	\begin{equation*}
		|G(v^n,\eta^n)-G(v,\eta^n)|+\left|G(v,\eta^n)-G(v,\eta)\right|\,.
	\end{equation*}
	Let $T$ be such that $\psi$ has its support in $[0,T]\times(0,1)$. The first term is bounded by 
	\begin{equation*}
		\sup_{t\le T}\|v_t-v^n_t\|_{\infty}\int_{[0,\infty)\times(0,1)}\psi\,\dd\eta^n
	\end{equation*}
	and thus goes to $0$ as $n\rightarrow\infty$ because $v^n$ goes to $v$ uniformly and $\eta^n(\psi)$ goes to $\eta(\psi)$ by vague convergence.
	The second term goes to $0$ because $v\psi\in\mathcal C_c([0,\infty)\times(0,1))$, by vague convergence.
\end{proof}

\subsection{Identification of the limit}\label{Subsec:IdLimit}

In~\cref{prop:tightness} we proved tightness of $(u^\kappa)_{\kappa > 1}$. It remains to show that the limit of any converging subsequence is a stationary solution of the Reflected Stochastic Heat Equation~\eqref{eq:SPDE_NP}. At this point, the main difficulty consists in identifying the random reflection measure. We will proceed in a slightly indirect way.

Let $(b^n)_n$ be the sequence of functions given by~\eqref{eq:approximate_b}. Let $u^{\kappa,n}$ be the solution to~\eqref{eq:SHE} (in the sense of~\cref{theorem:existence_uniqueness_smooth}) with drift $b^n$ started from $u^{\kappa,n}_0\eqlaw\mathbb Q_{0,0}^{\kappa,n}$. Informally, we will identify a random measure $\eta^{\kappa,n}$ in the dynamics of $u^{\kappa,n}$ that will converge as $n\to\infty$ and then $\kappa\to+\infty$, to the reflection measure of~\eqref{eq:SPDE_NP}.

\begin{proposition}\label{Prop:MildLimitPoint}
	The collection $(u^{\kappa,n},\eta^{\kappa,n}, W)_n$ is tight in
	$$ \mathcal C([0,\infty)\times [0,1]) \times \mathbb M \times \mathcal C([0,\infty), \mathcal S'([0,1]))$$
	and any limit point $(\tilde{u}^\kappa,\tilde{\eta}^\kappa,W)$ satisfies:\begin{enumerate}
		\item $\tilde{u}^\kappa$ has the same law as $u^\kappa$, and $\tilde{u}^\kappa(0,\cdot)$ is independent from $W$,
		\item For any given $\phi\in \mathcal C^\infty_0([0,1])$, a.s.~for all $t>0$
		\begin{equation}\label{Eq:tildeukappa}	(\tilde{u}^{\kappa}_t,\phi)-(\tilde{u}^{\kappa}_0,\phi)-\int_0^t(\tilde{u}^{\kappa}_s,\frac12\phi'')\,\dd s-\int_{[0,t]\times[0,1]}\phi(x)\,\tilde{\eta}^{\kappa}(\dd s,\dd x)-W_t(\phi) = 0\,.
		\end{equation}
		\item For any given $\psi \in \mathcal C_c([0,\infty)\times(0,1))$, a.s.~$\int \psi(t,x) |\tilde{u}^\kappa(t,x)| \, \dd \tilde{\eta}^\kappa(\dd t,\dd x) = 0$.
	\end{enumerate}
\end{proposition}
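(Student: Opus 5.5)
The natural choice is $\eta^{\kappa,n}(\dd t,\dd x):=b^n(u^{\kappa,n}_t(x))\,\dd t\,\dd x=\kappa g_{1/n}(u^{\kappa,n}_t(x))\,\dd t\,\dd x$, a nonnegative measure; the plan takes this as the definition. The proof then has three steps: tightness, passage to the limit in the weak formulation, and the support property.

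\textbf{Step 1: tightness.} Tightness of $(u^{\kappa,n})_n$ and the identification of its law follow from \cref{prop:stability}. The law of $W$ does not depend on $n$, so $W$ is trivially tight. For $\eta^{\kappa,n}$ we use \cref{Lemma:TightMandphi}. Fix $\phi\in\mathcal C^\infty_0([0,1])$. The weak formulation \eqref{eq:she_weak} gives
\[
X^n_t:=\int_{[0,t]\times[0,1]}\phi\,\dd\eta^{\kappa,n}=(u^{\kappa,n}_t,\phi)-(u^{\kappa,n}_0,\phi)-\int_0^t(u^{\kappa,n}_s,\tfrac12\phi'')\,\dd s-W_t(\phi).
\]
The right-hand side is a continuous functional of $(u^{\kappa,n},W)$ with values in $\mathcal C([0,\infty),\R)$. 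It is therefore tight, and \cref{Lemma:TightMandphi} gives tightness of $(\eta^{\kappa,n})_n$ in $\mathbb M$. Joint tightness of the triple follows from tightness of each marginal. Independence of $\tilde u^\kappa_0$ from $W$ passes to the limit, because $u^{\kappa,n}_0$ is independent of $W$ and independence is preserved under weak convergence of the pair.

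\textbf{Step 2: limiting weak formulation.} Along a convergent subsequence, apply Skorokhod's representation. The second part of \cref{Lemma:TightMandphi}, applied jointly with the other coordinates, identifies the limit of $X^n$ as $\int_{[0,t]\times[0,1]}\phi\,\dd\tilde\eta^\kappa$. The other terms in the identity above converge by continuity, which yields \eqref{Eq:tildeukappa}. To get the joint statement one should re-run the subsequence argument in the proof of \cref{Lemma:TightMandphi} with the full vector, i.e.\ extract a further subsequence of $(u,\eta,W,X)$.

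\textbf{Step 3: support property (main obstacle).} We need $\int\psi|\tilde u^\kappa|\,\dd\tilde\eta^\kappa=0$. For $\kappa$ fixed the limiting drift is concentrated on $\{u=0\}$, but the identity is only approximate at level $n$:
\[
\int\psi\,|u^{\kappa,n}|\,\dd\eta^{\kappa,n}=\kappa\int\psi(t,x)\,|u^{\kappa,n}_t(x)|\,g_{1/n}(u^{\kappa,n}_t(x))\,\dd t\,\dd x .
\]
The function $y\mapsto|y|g_{1/n}(y)$ is bounded by $C n^{-1/2}\cdot n^{1/2}=C$ pointwise, so a pointwise bound does not suffice. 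Instead we take expectations and use stationarity: the expectation equals
\[
\kappa\int\psi(t,x)\,\mathbb Q^{\kappa,n}_{0,0}\big[|w_x|\,g_{1/n}(w_x)\big]\,\dd t\,\dd x .
\]
The density of $\mathbb Q^{\kappa,n}_{0,0}$ with respect to $\mathbb W_{0,0}$ is bounded by $e^{2\kappa}$ (\cref{remark:approximation}), and the marginal of $w_x$ under the Brownian bridge has a density bounded uniformly for $x$ in the compact support of $\psi$. So the expectation is at most $C\int|y|g_{1/n}(y)\,\dd y=O(n^{-1/2})\to0$. Since $G(v,\eta)=\int v\psi\,\dd\eta$ is continuous by \cref{lemma:continuity_u_eta} with $v=|u|$ (continuous, with compact support handled through $\psi$), Fatou's lemma under Skorokhod gives $\mathbb E\int\psi|\tilde u^\kappa|\,\dd\tilde\eta^\kappa\le\liminf_n(\cdots)=0$. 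Assuming $\psi\ge0$ (otherwise split it into positive and negative parts), this yields item (3).
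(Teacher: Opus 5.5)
Your proof is correct. Steps 1 and 2 coincide with the paper's proof: the same $\eta^{\kappa,n}=b^n(u^{\kappa,n})\,\dd t\,\dd x$, tightness of $X^{\kappa,n}$ read off from the weak formulation, then \cref{Lemma:TightMandphi}. Your remark that one must extract a further subsequence of the whole vector $(u,\eta,W,X)$ to pass to the limit jointly is a point the paper leaves implicit.

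Step 3 is where you genuinely differ. The paper stays pathwise after Skorokhod. Since $g_{1/n}$ is decreasing in $|y|$, for every $A>0$
\[
\int|u^{\kappa,n}|\,\psi\,\dd\eta^{\kappa,n}\le A\,\eta^{\kappa,n}(\psi)+\kappa\, g_{1/n}(A)\int|u^{\kappa,n}|\,\psi\,\dd t\,\dd x .
\]
The first term tends to $A\,\tilde\eta^\kappa(\psi)<\infty$, and the second tends to $0$ because $g_{1/n}(A)\to0$; letting $A\downarrow0$ concludes. So your claim that a pointwise bound does not suffice only applies to the crude bound $\sup_y|y|g_{1/n}(y)\le C$. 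Truncating at level $A$ and using the a.s.\ convergence of $\eta^{\kappa,n}(\psi)$ does work pathwise.

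Your route instead takes expectations and uses three ingredients: stationarity of $u^{\kappa,n}$, the bound $e^{2\kappa}$ on $\dd\mathbb Q^{\kappa,n}_{0,0}/\dd\mathbb W_{0,0}$, and the bounded $\mathcal N(0,x(1-x))$ density on the support of $\psi$. These give $\mathbb E\int\psi|u^{\kappa,n}|\,\dd\eta^{\kappa,n}=O(n^{-1/2})$, and you conclude via continuity of $G$ and Fatou. All of this checks out, and Fubini is harmless by nonnegativity.

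What your approach buys is a quantitative $L^1$ rate, with no need to control the limit $\tilde\eta^\kappa(\psi)$. What it costs is dependence on the equilibrium setting and on the explicit invariant measure. The paper's argument uses neither, and would apply unchanged to any initial condition for which the approximations converge.
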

\begin{remark}
	Let us point out that we do not have at our disposal an analytically weak characterisation of the solution of~\eqref{eq:SPDE_skew}. With such a characterisation, and certainly some additional work, one could identify uniquely the law of all limit points $(\tilde{u}^\kappa,\tilde{\eta}^\kappa,W)$. Nevertheless, we will be able to show that as $\kappa\uparrow \infty$, we recover a unique limit in law, whatever choice of limits points is made for each $\kappa>0$: this will be the content of the proof of~\cref{theorem:dynamical_convergence} presented below.
\end{remark}
\begin{proof}
	Recall that $u^{\kappa,n}$ is a stationary solution of~\eqref{eq:SHE} with $b^n$ given by~\eqref{eq:approximate_b}. We define a random measure on $[0,\infty)\times(0,1)$
	\[
	\eta^{\kappa,n}(\dd t,\dd x):=b^n(u_t^{\kappa,n})\,\dd t\,\dd x\,.
	\]
	Since $b^n$ is a bounded function, this measure belongs to $\mathbb M$. \cref{prop:stability} implies that the family $(u^{\kappa,n})_n$ is tight. In addition, $W$ is a space-time white noise, so its law does not depend on $\kappa$ and $n$. The weak formulation~\eqref{eq:she_weak} of the SPDE satisfied by $u^{\kappa,n}$ yields for any $\phi\in\mathcal C^\infty_0([0,1])$
	\begin{equation}\label{Eq:tildeukappan}
		(\tilde{u}^{\kappa,n}_t,\phi)-(\tilde{u}^{\kappa,n}_0,\phi)-\int_0^t(\tilde{u}^{\kappa,n}_s,\frac12\phi'')\,\dd s-X^{\kappa,n}_t-W_t(\phi) = 0\;,
	\end{equation}
	where
	$$ X^{\kappa,n} := \int_0^t \phi(x) \, \eta^{\kappa,n}(\dd s, \dd x)\;,\quad t\ge 0\,, n\ge 1\;.$$
	From this identity and the previous arguments, we deduce that $(X^{\kappa,n})_n$ is tight in $\mathcal C([0,\infty),\R)$. In turn, by~\cref{Lemma:TightMandphi} we deduce that $(\eta^{\kappa,n})_{n\ge 1}$ is tight in $\mathbb M$.
	
	As the product of compact sets is compact in the product topology, $(u^{\kappa,n},\eta^{\kappa,n},W)_n$ is tight in the product topology. Let $(\tilde{u}^{\kappa},\tilde{\eta}^{\kappa},\tilde{W})$ be the limit of a converging subsequence, that we still denote $(u^{\kappa,n},\eta^{\kappa,n},W)_n$ for convenience.
	
	Let us now check that each of the three assertions of the statement holds true. The first assertion is a direct consequence of~\cref{prop:stability}. Regarding the second assertion, using~\cref{Lemma:TightMandphi} we can pass to the limit on~\eqref{Eq:tildeukappan} and obtain~\eqref{Eq:tildeukappa}.
	
	Let us finally prove the third assertion. For convenience, by Skorokhod representation theorem we can assume that $(u^{\kappa,n},\eta^{\kappa,n},W^n)_n$ converges almost surely to $(\tilde{u}^{\kappa},\tilde{\eta}^{\kappa},\tilde{W})$.	We use~\cref{lemma:continuity_u_eta} to write that for any $\psi\in\mathcal C_c([0,\infty)\times(0,1),\mathbb R_+)$,
	\begin{equation*}
		\int_{[0,\infty)\times(0,1)}\tilde{u}^{\kappa}\psi\,\dd\tilde{\eta}^{\kappa}=\lim_n\int_{[0,\infty)\times(0,1)}u^{\kappa,n}\psi\,\dd\eta^{\kappa,n}
	\end{equation*}
	Moreover, for any $A>0$, for every $n\ge0$,
	\begin{align*}
		&\int_{[0,\infty)\times(0,1)}|u^{\kappa,n}|\,\psi\,\dd\eta^{\kappa,n}\\
		= &\int_{[0,\infty)\times(0,1)}\1_{(|u^{\kappa,n}|\le A)}|u^{\kappa,n}|\,\psi\,\dd\eta^{\kappa,n}+\int_{[0,\infty)\times(0,1)}\1_{(|u^{\kappa,n}|>A)}|u^{\kappa,n}|\,\psi\,\dd\eta^{\kappa,n}\\
		\le&A\eta^{\kappa,n}(\psi)+\kappa g_{1/n}(A)\int_{[0,\infty)\times(0,1)}|u^{\kappa,n}(t,x)|\,\psi(t,x)\,\dd t\,\dd x\,.
	\end{align*}
	Notice that $\eta^{\kappa,n}(\psi)$ converges to $\tilde{\eta}^\kappa(\psi)$ as $n\rightarrow\infty$, a quantity which is a.s. finite.
	Also, $g_{1/n}(A)$ goes to $0$ as $n\rightarrow\infty$, and by the Dominated Convergence theorem ($u^{\kappa,n}$ goes to $\tilde{u}^\kappa$ uniformly on compact sets, and $\psi$ has compact support),
	\begin{equation*}
		\lim_{n}\int_{[0,\infty)\times(0,1)}|u^{\kappa,n}(t,x)|\,\psi(t,x)\,\dd t\,\dd x=\int_{[0,\infty)\times(0,1)}|\tilde{u}^\kappa(t,x)|\,\psi(t,x)\,\dd t\,\dd x\,,
	\end{equation*}
	a quantity which is a.s. finite. Combining the previous convergences,
	\begin{equation*}
		\lim_n\int_{[0,\infty)\times(0,1)}|u^{\kappa,n}|\,\psi\,\dd\eta^{\kappa,n} \le A\tilde{\eta}^\kappa(\psi)\,.
	\end{equation*}
	As this is true for every $A>0$, we let $A\downarrow0$ to conclude that $\int_{[0,\infty)\times(0,1)}|\tilde{u}^{\kappa}|\,\psi\,\dd\tilde{\eta}^{\kappa}=0$ a.s.
	Skorokhod representation theorem was harmless because being equal to $0$ a.s.~implies being equal in law to $0$.
	As a consequence, one has that for all $\psi\in\mathcal C_c([0,\infty)\times(0,1),\mathbb R_+)$,
	\begin{equation}\label{eq:reflection_measure}
		\int_{[0,\infty)\times(0,1)}|\tilde{u}^{\kappa}|\,\psi\,\dd\tilde{\eta}^{\kappa}=0\,.
	\end{equation}
	The proof is complete.
\end{proof}
With this result at hand, we can proceed with the proof of our main result.
\begin{proof}[Proof of~\cref{theorem:dynamical_convergence}]
	By~\cref{prop:tightness}, $(u^\kappa)_{\kappa > 1}$ is tight. It remains to show that the limit of any converging subsequence coincides in law with $u^+$. Let $(u^{\kappa_k})_{k\ge 1}$ be a converging subsequence. Now for every $k\ge 1$, let $(\tilde{u}^{\kappa_k}, \tilde{\eta}^{\kappa_k},W)$ be an arbitrary limit point of a converging subsequence of $(u^{\kappa_k,n},\eta^{\kappa_k,n}, W)_n$, which is tight by~\cref{Prop:MildLimitPoint}. Still by~\cref{Prop:MildLimitPoint}, for every $k\ge 1$ and every $\phi\in \mathcal C^\infty_0([0,1])$, a.s.~for all $t>0$
	\begin{equation}\label{Eq:tildeu}
		(\tilde{u}^{\kappa_k}_t,\phi)-(\tilde{u}^{\kappa_k}_0,\phi)-\int_0^t(\tilde{u}^{\kappa_k}_s,\frac12\phi'')\,\dd s-\int_{[0,t]\times[0,1]}\phi(x)\,\tilde{\eta}^{\kappa_k}(\dd s,\dd x)-W_t(\phi) = 0\;.
	\end{equation}
	Since $\tilde{u}^{\kappa_k}$ coincides in law with $u^{\kappa_k}$, we deduce that $(\tilde{u}^{\kappa_k})_{k\ge 1}$ converges in law, and is therefore tight. Moreover the law of $W$ does not depend on $k\ge1$. By~\eqref{Eq:tildeu} we deduce that the sequence of processes $(\int_{[0,t]\times[0,1]}\phi(x)\,\tilde{\eta}^{\kappa_k}(\dd x,\dd s),t\ge 0)_{k\ge 1}$ is tight in $\mathcal C([0,\infty),\R)$. Since this holds for any given $\phi\in \mathcal C^\infty_0([0,1])$, \cref{Lemma:TightMandphi} ensures that $(\tilde{\eta}^{\kappa_k})_{k\ge 1}$ is tight.
	
	The sequence $(\tilde{u}^{\kappa_k}, \tilde{\eta}^{\kappa_k},W)_{k\ge 1}$ is therefore tight. Up to extracting a further subsequence, we can assume that it converges in law: denote by $(\tilde{u}^\infty,\tilde{\eta}^\infty,W)$ the limit. Note that $\tilde{u}^{\kappa_k}(0,\cdot)$ is independent of $W$ for each $k\ge 1$, so that the same property holds at the limit.\\
	Let us check that $(\tilde{u}^\infty,\tilde{\eta}^\infty,W)$ satisfies all the conditions listed in~\cref{Def:NP}.
	
	\noindent \cref{item:NP_1} We have chosen the sequence of initial conditions $u_0^{\kappa}\overset{\text{law}}{=}\mathbb Q_{0,0}^{\kappa}$ so by~\cref{theorem:static_convergence}, $\tilde{u}^\infty_0\overset{\text{law}}{=}\mathbb P_{0,0}^3$. Moreover $\tilde{u}^{\infty}$ is stationary as the limit of stationary processes, so for every $t>0$, $\tilde{u}^\infty_t\overset{\text{law}}{=}\mathbb P_{0,0}^3$. Since in addition, $\tilde{u}^\infty$ is continuous in time, we deduce that $\tilde{u}^\infty$ is a.s. nonnegative and vanishes at $x=0,1$.
	
	\noindent \cref{item:NP_2} For every $\eps>0$ and $T>0$
	\begin{equation*}
		\tilde{\eta}^{\infty}([0,T]\times(\eps,1-\eps))\le C_\eps \int_{[0,T]\times [0,1]} x(1-x) \tilde{\eta}^\infty(\dd s,\dd x)<\infty\,,
	\end{equation*}
	where $C_\eps > 0$ is a constant only depending on $\eps$, and the fact that the latter quantity is finite follows from $\tilde{\eta}^\infty\in\mathbb M$.
	
	\noindent \cref{item:NP_3} Fix $\phi \in \mathcal C^\infty_0([0,1])$. By~\cref{Lemma:TightMandphi}, we know that $(\tilde{u}^{\kappa_k}, X^k,W)_{k\ge 1}$ converges in law to $(\tilde{u}^\infty,X,W)$ where
	$$ X^k_t := \int_{[0,t]\times[0,1]}\phi(x)\,\tilde{\eta}^{\kappa_k}(\dd s,\dd x)\;,$$
	and
	$$ X_t := \int_{[0,t]\times[0,1]}\phi(x)\,\tilde{\eta}^{\infty}(\dd s,\dd x)\;.$$
	Passing to the limit in~\eqref{Eq:tildeu}, we deduce that a.s.~for all $t>0$
	\begin{equation}\label{Eq:tildeulimit}
		(\tilde{u}^{\infty}_t,\phi)-(\tilde{u}^{\infty}_0,\phi)-\int_0^t(\tilde{u}^{\infty}_s,\frac12\phi'')\,\dd s-\int_{[0,t]\times[0,1]}\phi(x)\,\tilde{\eta}^{\infty}(\dd s,\dd x)-W_t(\phi) = 0\;.
	\end{equation}
	\noindent \cref{item:NP_4} Fix $\psi \in\mathcal C_c([0,\infty)\times[0,1])$. By~\cref{Prop:MildLimitPoint} a.s.~for all $k\ge 1$
	\[
	\int_{[0,\infty)\times(0,1)}\psi\, |\tilde{u}^{\kappa_k}|\,\dd\tilde{\eta}^{\kappa_k}=0\,.
	\]
	By~\cref{lemma:continuity_u_eta}
	\[
	\int_{[0,\infty)\times(0,1)}\psi\, |\tilde{u}^{\infty}|\,\dd\tilde{\eta}^{\infty}=0\,.
	\]
	We claim that this implies that 
	\begin{equation}\label{eq:reflection_measure_infty}
		\int_{[0,\infty)\times(0,1)}\tilde{u}^{\infty}\,\dd\tilde{\eta}^{\infty}=0\,.
	\end{equation}
	Indeed, we let $(a_j,b_j,T_j)_j$ be three sequences such that $\lim_j a_j=0$, $\lim_j b_j=1$, $\lim_j T_j=\infty$, and we let $(\psi_j)_{j\ge0}$ be a sequence of functions
	in $\mathcal C_c([0,\infty)\times[0,1],\mathbb R)$ such that for every $j$, $\psi_j$ has support in $[0,T_j]\times[a_j,b_j]$ and such that $\psi_j\longrightarrow1$ pointwise a.e. on $[0,\infty)\times[0,1]$.
	Fatou's lemma implies that 
	\begin{equation*}
		\int_{[0,\infty)\times(0,1)}|\tilde{u}^{\infty}|\,\dd\tilde{\eta}^{\infty}\le\liminf_{j\rightarrow\infty}\int_{[0,\infty)\times(0,1)}|\tilde{u}^{\infty}|\,\psi_j\,\dd\tilde{\eta}^{\infty}=0\quad\text{a.s.}
	\end{equation*}
	We have thus proven that $(\tilde{u}^\infty,\tilde{\eta}^\infty)$ is a solution of~\eqref{eq:SPDE_NP}. In particular, this identifies uniquely all possible limits in law of $(u^{\kappa})_{\kappa > 1}$.
\end{proof}

\section*{Acknowledgements} TL wishes to thank Pierre Faugère for several discussions, especially about the proof of~\cref{Lemma:TightMandphi},
as well as Elias Nohra for discussions about bridges of stochastic processes. The work of CL and LZ was partially funded by the ANR project Smooth ANR-22-CE40-0017 and the Institut Universitaire de France. The work of CL was also partially funded by the ANR project RANDOP ANR-24-CE40-3377.

\bibliographystyle{siam}
\bibliography{sample}

\end{document}